\newtheorem{theorem}{Theorem}
\newtheorem{proposition}[theorem]{Proposition}
\newtheorem{lemma}[theorem]{Lemma}
\newtheorem{corollary}[theorem]{Corollary}
\theoremstyle{definition}
\newtheorem{definition}[theorem]{Definition}
\newtheorem{remark}[theorem]{Remark}
\let\OLDthebibliography\thebibliography
\renewcommand\thebibliography[1]{
	\OLDthebibliography{#1}
	\setlength{\parskip}{1pt}
	\setlength{\itemsep}{1pt plus 0.3ex}
}
\numberwithin{equation}{section}
\numberwithin{theorem}{section}
\DeclarePairedDelimiter\norm{\lVert}{\rVert}%
\let\oldnorm\norm
\def\norm{\@ifstar{\oldnorm}{\oldnorm*}}
\newcommand{\al} {\alpha}
\newcommand{\pa} {\partial}
\newcommand{\be} {\beta}
\newcommand{\de} {\delta}
\newcommand{\De} {\Delta}
\newcommand{\Dn} {(-\Delta)^{\frac{n}{2}}}
\newcommand{\ga} {\gamma}
\newcommand{\om} {\omega}
\newcommand{\La} {\Lambda}
\newcommand{\Gr} {\nabla}
\newcommand{\noi} {\noindent}
\newcommand{\ep} {\epsilon}
\newcommand{\ra} {\rightarrow}
\newcommand{\vp}{\varphi}
\newcommand\restr[2]{{% we make the whole thing an ordinary symbol
  \left.\kern-\nulldelimiterspace % automatically resize the bar with \right
  #1 % the function
 % \vphantom{\big|} % pretend it's a little taller at normal size
  \right|_{#2} % this is the delimiter
  }}
\def\C{{\mathcal C}}
\def\Q{{\mathcal Q}}
\def\X{{\mathcal X}}
\def\S{\mathbb{S}}
\def\R{{\mathbb R}}
\def\d{{\rm d}}
\def\dr{{\rm d}r}
\def\dS{{\rm dS}}
\def\dz{{\rm d}z}
\def\dx{{\rm d}x}
\def\dy{{\rm d}y}
\date{}
\begin{document}
\title[Bol's type inequality for singular metrics]{Bol's type inequality for singular metrics and its application to prescribing $\Q$-curvature problems}

%{Higher order Bol's inequality for the singular Liouville equation}

\author[M. Ghosh AND A. Hyder]{Mrityunjoy Ghosh$^1$ AND Ali Hyder$^1$}

\address{$^1$Tata Institute of Fundamental Research,
	Centre for Applicable
	Mathematics\\
	Post Bag No. 6503, Sharadanagar,
	Bangalore 560065, India}

\email{ghoshmrityunjoy22@gmail.com, hyder@tifrbng.res.in}

\subjclass[2020]{53C18, 35B44, 35J75, 35J61, 35R11.}
\keywords{Bol's inequality, $\Q$-curvature, Liouville equation, Pohozaev identity, Normal solution.}

\begin{abstract}
In this article, we study higher-order Bol's inequality for  radial normal solutions to a singular Liouville equation. %We establish various bounds on the volume with respect to the prescribed bound of the $\Q$-curvature associated with a radial normal solution, in a manner analogous to Li-Wei \cite{WeiLi2023}. 
By applying these  inequalities along with compactness arguments, we derive  necessary and sufficient conditions for the existence of radial normal solutions to  a singular $\Q$-curvature problem. Moreover, under suitable assumptions on the $\Q$-curvature, we obtain  uniform bounds on the total $\Q$-curvature. % corresponding to normal conformal metrics.

%, which gives a higher-order analogue in the singular case of the results obtained by Hyder-Martinazzi in \cite{Ali_Martinazzi}. 
\end{abstract}

\maketitle

%\tableofcontents

\section{Introduction}\label{Section}

We consider the following singular $\Q$-curvature equation \begin{align}\label{eq-10}\left\{\begin{array}{ll}\Dn \tilde u=Qe^{n\tilde u}-\tilde\alpha\delta_0,\\ \rule{.0cm}{.5cm}\int_{\R^n}|Q|e^{n\tilde u}\dx<\infty,\end{array}\right. \end{align} where $n\geq 2$, $Q\in L^\infty_{\rm loc}(\R^n)$  is prescribed, $\tilde\alpha\in\R$ and $\delta_0$ is the Dirac mass at the origin. As $$(-\Delta)^\frac n2 \log\frac{1}{|x|}=\gamma_n\delta_0,\quad \gamma_n:=\frac{(n-1)!}{2}|\S^n|,$$ setting $$u(x):=\tilde u(x)+\frac{\tilde\alpha}{\gamma_n}\log\frac{1}{|x|},$$ we see that $u$ satisfies the following $\Q$-curvature equation in $\R^n$: 
%Let us consider the following $\Q$-curvature equation in $\R^n$: for $\al>-1,$
\begin{equation}\label{Q-equation}
	\left\{\begin{aligned}
		\Dn u &= |x|^{n\al}Q e^{nu}\quad \text{in}\; \R^n,\\
		\int_{\R^n}|x|^{n\al} & |Q(x)| 
 e^{nu(x)}\dx< \infty,
	\end{aligned}\right.
\end{equation}
where  $\alpha:=\frac{\tilde\alpha}{\gamma_n}$. When $n\in\mathbb N$ is an odd integer, the operator $(-\Delta)^\frac n2$ is nonlocal, and therefore, in order to define the above equation in the distributional sense, one has to assume certain growth assumptions on $u$ at infinity. As we will be working mainly with the   integral equation \eqref{eq-normal} below, we omit the definition and basic properties of the nonlocal operator $(-\Delta)^\frac n2$ here, and refer the interested reader to \cite{DaLioMartinazzi2015,Ali2019DIE,MartinazziJin,KonigLaurain} and the references therein. 

 Throughout this article, we assume that $\alpha>-1$ (unless specified elsewhere), and the volume of the conformal metric $e^{2\tilde u}|\dx|^2$ is finite, i.e., 
\begin{equation}\label{Volume}
	\La:=\int_{\R^n}e^{n\tilde u}\dx= \int_{\R^n} |x|^{n\al} e^{nu(x)}\dx<\infty,
\end{equation} where $|\dx|^2$ denotes the Euclidean metric. 
Geometrically, if $u$ is a solution of  \eqref{Q-equation}, then the $\Q$-curvature (cf. \cite{Branson_Orsted,Chang,Fefferman2002,Fefferman2003, GJMS})  
of the conformal metric $e^{nu}|\dx|^2$  is  $|x|^{n\al}Q.$ In particular, when $n=2,$ the Gaussian curvature of the metric $e^{nu}|\dx|^2$ is $|x|^{n\al}Q.$ 

Existence and classification of solutions to \eqref{Q-equation}   is extensively studied by many authors in the past decades with various assumptions on the $\Q$-curvature; to quote a few, we mention the articles by, Da Lio-Martinazzi-Rivi\`{e}re \cite{DaLioMartinazzi2015}, DelaTorre-Mancini-Pistoia \cite{DelaTorre2020} and Ahrend-Lenzmann \cite{AhrendLenzmann} for $n=1$;  
 Chen-Li \cite{Chen1991}, Troyanov \cite{Troyanov}, Chanillo-Kiessling \cite{Chanillo1994}, Li-Shafrir \cite{LiShafrir1994}, Prajapat-Tarantello \cite{PrajapatTarantello} and Del Pino-Esposito-Musso \cite{DEM2012} for $n=2$; Jin-Maalaoui-Martinazzi-Xiong \cite{MartinazziJin} for $n=3$; Lin \cite{Lin1998}, Wei-Ye \cite{WeiYe2008}, Hyder-Martinazzi \cite{Ali_Martinazzi},  Jin-Shu-Tai-Wu \cite{Jin_Shu} and Ahmedou-Wu-Zhang \cite{Ahmedou} for $n=4$; Wei-Xu \cite{WeiXu1999}, Chang-Chen \cite{ChangChen2001}, Xu \cite{Xu2005}, Martinazzi \cite{Martinazzi2013AIH,Martinazzi2009}, Huang-Ye \cite{HuangYe}, Hyder \cite{Ali2019DIE,Ali2017APDE},  Hyder-Mancini-Martinazzi  \cite{Ali_Mancini}, K\"onig-Laurain \cite{KonigLaurain} 
 for higher dimensions. For further developments in this direction, we refer the reader to \cite{HuangYe2025, Malchiodi2008,Malchiodi2007,JostWang2009,Gursky2015} and the references  therein.

Before proceeding further, we recall the definition of normal solutions of \eqref{Q-equation}. 
\begin{definition}\label{Def:normal}
    A solution $u$ of \eqref{Q-equation} is called a \textit{normal solution} if  $u$ satisfies the following integral equation 
    \begin{align}\label{eq-normal}u(x)=\frac{1}{\ga_n}\int_{\R^n} \log\left(\frac{1+|y|}{|x-y|}\right)|y|^{n\al} Q(y)e^{nu(y)}\dy +c,\end{align} for some $c\in\R$. 

\end{definition}

It is well-known that for $Q\equiv1$, the volume $\Lambda$ associated to any normal    solution $u$ to \eqref{Q-equation} is quantized. More precisely,   $$\La=\La_1(1+\al),\quad \La_1:=(n-1)! |\mathbb{S}^n|.$$ Interestingly, for the regular case $\alpha=0$ in two dimensions, the above equality can be seen as an optimal case of the Alexandrov-Bol's inequality. Let us recall first the following version of Alexandrov-Bol's inequality from \cite{Bandle1976,Suzuki}  (for various other forms of Bol's type inequalities and its applications we refer the reader to \cite{Bol1941,Bartolucci2019, Gui2018,Gui2023} and the references therein): if   $\psi\in C^2(\Omega)\cap C^0(\bar\Omega)$ is positive for some smooth open set $\Omega\subset\R^2$ and $\psi$ satisfies \begin{align}\label{eq-psi} -\Delta\log\psi\leq \psi\quad\text{in }\Omega ,\end{align} then \begin{align}\label{eq-AB}  \left(\int_{\partial\Omega} \sqrt\psi \d\sigma\right)^2\geq\frac12\left(8\pi-\int_{\Omega} \psi \dx\right)\int_{\Omega}\psi \dx.\end{align}  For the choice $\psi=2e^{2u}$,  one has \eqref{eq-psi} in $\R^2$ provided $Q\leq1$, and in particular, taking $\Omega=B_R$ with $R\to\infty$ in \eqref{eq-AB}, one deduces that $\Lambda\geq4\pi=\Lambda_1$.

In \cite{WeiLi2023}, Li-Wei obtained the following  Bol's-type inequality in higher dimensions: 
\begin{itemize} \item[(i)]  Let $u$ be a radial normal solution of \eqref{Q-equation} in $\R^n$ with  $n\geq 2$, $\al=0$ and   $Q(x)\leq 1$ for all $x\in\R^n$. Then  $	\La\geq \La_1$, and the  equality holds   if and only if $Q\equiv 1.$
\item[(ii)]  Let $u$ be a radial normal solution of \eqref{Q-equation} in $\R^n,$ with $n\geq 2$  and $\al=0.$ Assume that $Q(x)\geq 1$ for all $x\in\R^n$. Then $\La\leq \La_1,$ and the equality holds  if and only if $Q\equiv 1.$
\end{itemize}

In this article, first, we derive an analogous Bol's-type inequality for the singular case, that is, for every $\alpha>-1$, we prove:

\begin{theorem}\label{Bols_up}
    Let $u$ be a radial normal solution to \eqref{Q-equation} for some $\alpha>-1$ and $Q\in L^\infty_{\rm loc}(\R^n)$. 
    Then if $Q\leq 1,$ we have 
    $$\La\geq \La_1(1+\al).$$
Moreover, the equality holds if and only if $Q\equiv 1.$
\end{theorem}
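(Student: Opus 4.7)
My plan is to use the integral representation \eqref{eq-normal} to extract the precise decay of a radial normal solution at infinity, and then to sharpen the resulting bound via the radial structure.

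Setting $V:=\int_{\R^n}|y|^{n\al}Q(y)e^{nu(y)}\dy$ and splitting the integrand in \eqref{eq-normal} according to whether $|y|\leq |x|/2$ or $|y|\geq|x|/2$, dominated convergence will give, for a radial normal solution,
\[
u(r)=-\frac{V}{\gamma_n}\log r+O(1)\quad\text{as }r\to\infty.
\]
The finiteness of $\La$ then forces $\int_1^\infty r^{n(1+\al)-1-nV/\gamma_n}\dr<\infty$, i.e., $V>(1+\al)\gamma_n=\tfrac12\La_1(1+\al)$. Since $Q\leq 1$ implies $V\leq\La$, this yields only $\La>\tfrac12\La_1(1+\al)$, which is exactly half of the assertion.

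The hard part is producing the missing factor of $2$, i.e., upgrading $V>\tfrac12\La_1(1+\al)$ to $V\geq\La_1(1+\al)$ (equivalently, $-r u'(r)\gamma_n\to V\geq\La_1(1+\al)$). I plan to do this by rewriting \eqref{eq-normal} for radial $u$ as a one-dimensional integral against the spherically averaged log-kernel, differentiating in $r$ to express $-r u'(r)$ in terms of the running mass $V(r):=\int_{B_r}|y|^{n\al}Q\,e^{nu}\dy$, and then invoking the condition $Q\leq 1$ together with an integration-by-parts/Pohozaev-type identity. In the two-dimensional regular case ($n=2,\ \al=0$) this is exactly what Alexandrov--Bol \eqref{eq-AB} applied to $\psi=2e^{2u}$ on $B_R$ with $R\to\infty$ achieves (producing precisely $\La\geq 4\pi$); in the present higher-dimensional, singular setting the radial integral equation and the weight $|x|^{n\al}$ must do all the work of the perimeter-area inequality, and this is the delicate point.

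Once $\La\geq\La_1(1+\al)$ is established, the equality case is immediate: $\La=\La_1(1+\al)$ forces $V=\La$, so $\int|y|^{n\al}(1-Q)e^{nu}\dy=0$; since $1-Q\geq 0$ and $e^{nu}>0$ by \eqref{eq-normal}, we conclude $Q\equiv 1$.
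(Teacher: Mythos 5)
Your first phase (the rough decay bound giving $\La> \tfrac12\La_1(1+\al)$) is correct in spirit but cannot yield the sharp constant, as you yourself observe. The issue is that your second phase --- the ``upgrade'' --- is not carried out, and the route you sketch for it is unlikely to close the gap for a structural reason.

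You propose to differentiate the radial representation of $u$ and obtain a Pohozaev identity directly for $u$. But a Pohozaev identity for $(-\De)^{n/2}u=|x|^{n\al}Qe^{nu}$ necessarily produces a term $\tfrac1n\int (x\cdot\nabla Q)\,|x|^{n\al}e^{nu}\dx$, over which the hypothesis $Q\leq1$ gives you no control: $Q\leq1$ constrains the size of $Q$, not the sign of $x\cdot\nabla Q$. This is precisely why the paper does \emph{not} apply the Pohozaev identity to $u$. Instead, it introduces the auxiliary normal function
$v(x)=\tfrac{1}{\ga_n}\int \log\!\big(\tfrac{1+|y|}{|x-y|}\big)|y|^{n\al}e^{nu(y)}\dy$ (note the absence of $Q$), which solves $(-\De)^{n/2}v=Q_0e^{nv}$ with $Q_0:=|x|^{n\al}e^{n(u-v)}$. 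The Pohozaev identity for $v$ (Proposition \ref{Pohozaev}) involves $\La$ --- not your $V$ --- and the derivative term $x\cdot\nabla Q_0=n\al Q_0+n(x\cdot\nabla h)Q_0$ with $h=u-v$. The decisive observation is that $Q\leq1$ forces $\De h\geq0$ (via the explicit formulas \eqref{De_h2}/\eqref{De_h3}), and for a radial function $\De h\geq0$ implies $x\cdot\nabla h\geq0$. This gives $x\cdot\nabla Q_0\geq n\al Q_0$ pointwise and hence
$\tfrac{\La}{2\ga_n}(\La-2\ga_n)\geq\al\La$, i.e.\ $\La\geq\La_1(1+\al)$. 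Your sketch contains no analogue of $v$, $h$, or the sign of $\De h$, and those are exactly the elements that produce the missing factor of~$2$.

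A second, related issue: your intermediate target $V\geq\La_1(1+\al)$ is not what the theorem asserts, is not what the paper proves, and I see no reason it should hold --- the only thing one gets from integrability is $V>\tfrac12\La_1(1+\al)$, and the paper's Pohozaev inequality bounds $\La$, not $V$. Consequently your equality-case argument (``$\La=\La_1(1+\al)$ forces $V=\La$'') is also incomplete, since it rests on the unproved claim $V\geq\La_1(1+\al)$. In the paper the equality case is extracted from the Pohozaev argument itself: equality forces $\int(x\cdot\nabla h)e^{nu}\dx=0$, hence $\De h\equiv0$, hence $Q\equiv1$ by \eqref{De_h2}/\eqref{De_h3}.
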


\begin{theorem}\label{Bols_low}
  Let $u$ be a radial normal solution to \eqref{Q-equation} for some $\alpha>-1$ and $Q\in L^\infty_{\rm loc}(\R^n)$. Then if $Q\geq 1,$ we have 
    $$\La\leq \La_1(1+\al).$$
Moreover, the equality holds if and only if $Q\equiv 1.$
\end{theorem}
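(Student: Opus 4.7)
My plan is to mirror the argument used in the proof of Theorem~\ref{Bols_up} with the inequalities reversed, and to use the Pohozaev-type identity that naturally falls out of the radial ODE. Since $u$ is a radial normal solution, spherical averaging of the representation \eqref{eq-normal} against the Riesz-logarithmic kernel (and the fact that $\frac{1}{|\S^{n-1}|}\int_{\S^{n-1}}\log|r\omega-y|\,d\omega=\log\max(r,|y|)$) yields the first-order ODE
\[
u'(r)=-\frac{V(r)}{\gamma_n r},\qquad V(r):=\int_{B_r}|y|^{n\al}Q(y)e^{nu(y)}\dy,\qquad V'(r)=|\S^{n-1}|\,r^{n(1+\al)-1}Q(r)e^{nu(r)}.
\]
(Radiality of $u$ together with $|x|^{n\al}$ forces us to replace $Q$ by its spherical average without loss of generality.) Passing to logarithmic coordinates $t=\log r$ and setting $\phi(t):=u(e^t)+(1+\al)t$, I obtain the concave ODE
\[
-\phi''(t)=\frac{|\S^{n-1}|}{\gamma_n}\,Q(e^t)\,e^{n\phi(t)},\qquad \phi'(-\infty)=1+\al,\qquad \phi'(+\infty)=1+\al-\tfrac{\Lambda}{\gamma_n}.
\]

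Next, multiplying by $\phi'$ and integrating across $\R$ (the boundary terms in $(e^{n\phi})'$ vanish since $\phi\to-\infty$ at both ends, using the integrability constraint $\Lambda>\gamma_n(1+\al)$), followed by one integration by parts, produces the Pohozaev-type identity
\[
\Lambda\bigl(\Lambda-\Lambda_1(1+\al)\bigr)=\frac{2\gamma_n}{n}\int_{\R^n}(x\cdot\nabla Q(x))\,|x|^{n\al}e^{nu(x)}\dx.
\]
Thus the claim $\Lambda\le\Lambda_1(1+\al)$ is equivalent to the non-positivity of the right-hand side, which I plan to establish by exploiting the hypothesis $Q\ge1$ together with the concavity of $\phi$. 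Since $\phi''<0$, $\phi$ has a unique maximum at $t_*=\log r_*$ where $V(r_*)=\gamma_n(1+\al)$; I parameterize the left branch $(-\infty,t_*)$ and right branch $(t_*,+\infty)$ by the value of $\phi$ via inverses $t_L(s)<t_*<t_R(s)$ with $\phi(t_L(s))=\phi(t_R(s))=s$ for $s\le\phi_*$. Evaluating the energy $E(t)=\tfrac12(\phi'(t))^2+\tfrac{|\S^{n-1}|}{n\gamma_n}e^{n\phi(t)}$, whose evolution is $E'(t)=-\tfrac{|\S^{n-1}|}{\gamma_n}(Q(e^t)-1)e^{n\phi(t)}\phi'(t)$, and integrating on each branch in the variable $\phi$, I obtain
\[
\tfrac12(1+\al)^2=\tfrac{|\S^{n-1}|}{n\gamma_n}e^{n\phi_*}+\tfrac{|\S^{n-1}|}{\gamma_n}\int_{-\infty}^{\phi_*}\bigl(Q(e^{t_L(s)})-1\bigr)e^{ns}\,ds
\]
and an analogous expression for $\tfrac12(\phi'(+\infty))^2$ involving $Q(e^{t_R(s)})$. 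Subtracting and using $Q\ge1$, the bound $|\phi'(+\infty)|\le1+\al$ will follow, giving $\Lambda\le\Lambda_1(1+\al)$.

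The main obstacle, and the step I expect to be most delicate, is the final inequality: comparing $\int(Q(e^{t_R})-Q(e^{t_L}))e^{ns}\,ds$ to zero in the absence of any assumed monotonicity of $Q$. I expect to handle this by a careful level-set comparison combined with the monotonicity of $\phi'$, in parallel with the argument the authors use for Theorem~\ref{Bols_up}. Finally, for the equality case, tracing equality through all the estimates forces $Q(e^{t_L(s)})=Q(e^{t_R(s)})$ for every admissible level $s$, and combined with $Q\ge1$ and the strong maximum principle applied to the ODE for $\phi$, this should yield $Q\equiv1$.
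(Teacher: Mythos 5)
Your proof has two serious flaws, and the second one is fatal.

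\textbf{(1) The ODE reduction only holds in dimension two.} Your starting point is the shell-theorem identity
$\fint_{\S^{n-1}}\log|r\omega-y|\,d\omega=\log\max(r,|y|)$, which rests on the harmonicity of $\log|\cdot|$. But for $n\geq 3$ one has $\Delta\log|x|=\frac{n-2}{|x|^2}\neq 0$, so the mean-value property fails: a direct Taylor expansion near $r=1$, $|y|=s\to 0$ gives $\fint_{\S^{n-1}}\log|\omega-se_1|\,d\omega=\frac{n-2}{2n}s^2+O(s^3)$, which is not $\log\max(1,s)=0$. Consequently the relation $r\,u'(r)=-V(r)/\gamma_n$, and with it the entire ODE framework for $\phi(t)=u(e^t)+(1+\al)t$, is valid only for $n=2$. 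The paper avoids this by working directly with the integral representation and proving a Pohozaev identity (Propositions~\ref{Pohozaev} and \ref{Pohozaev_equal}) in all dimensions $n\geq2$.

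\textbf{(2) Your Pohozaev identity controls the wrong quantity, and the sign argument cannot work.} Even in $n=2$, tracing through your own computation: $V(\infty)=\int_{\R^n}|y|^{n\al}Q\,e^{nu}\dy=\La_*$, so $\phi'(+\infty)=1+\al-\La_*/\ga_n$ (you wrote $\La$ where it should be $\La_*$), and the identity that falls out of multiplying the ODE by $\phi'$ and integrating by parts is
\[
\frac{\La_*}{2\ga_n}\bigl(\La_*-\La_1(1+\al)\bigr)=\frac{1}{n}\int_{\R^n}(x\cdot\nabla Q)\,|x|^{n\al}e^{nu}\dx .
\]
The hypothesis $Q\geq1$ gives no sign control on $x\cdot\nabla Q$, and the conclusion you are aiming for, $\La_*\leq\La_1(1+\al)$, is in fact \emph{false} in general: take $Q$ radial, increasing and $\geq1$ (e.g. $Q=1+|x|^p$), so that $x\cdot\nabla Q\geq0$ and the identity forces $\La_*\geq\La_1(1+\al)$. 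The theorem only asserts $\La=\int|x|^{n\al}e^{nu}\dx\leq\La_1(1+\al)$, which is strictly weaker since $\La\leq\La_*$. The paper's argument gets at $\La$ rather than $\La_*$ precisely because it tests the Pohozaev identity not against $Q$ but against $Q_0=|x|^{n\al}e^{n(u-v)}$, where $v$ is given by \eqref{v_standard}. Then $x\cdot\nabla Q_0\,e^{nv}=n\al|x|^{n\al}e^{nu}+n\bigl(x\cdot\nabla h\bigr)|x|^{n\al}e^{nu}$ with $h=u-v$, and the key sign control is $x\cdot\nabla h\leq0$, which follows from $\Delta h\leq0$ (a direct computation from the Riesz kernel, using $Q\geq1$) together with radial symmetry. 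Your level-set/energy argument is a dressed-up attempt to get a sign on $\int(x\cdot\nabla Q)|x|^{n\al}e^{nu}$, which simply does not hold; the superharmonicity of $u-v$ is the ingredient you are missing.

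\textbf{(3) The equality in the Pohozaev identity is nontrivial.} You assert the boundary terms vanish ``since $\phi\to-\infty$ at both ends.'' This is the substance of Proposition~\ref{Pohozaev_equal}, and it is not automatic: one needs both $\be>1+\al$ (Corollary~\ref{lamda_al}) and the Jensen-type decay estimate of Lemma~\ref{u_weakBound} to kill the cross term $J_3$ and upgrade the Pohozaev \emph{inequality} of Proposition~\ref{Pohozaev} to an equality, which is indispensable for an upper bound on $\La$.
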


The proofs of Theorem \ref{Bols_up} and Theorem \ref{Bols_low} rely on appropriate  Pohozaev-type identities involving $\La$. Following the approach in \cite{WeiLi2023}, we first derive a Pohozaev-type inequality (Proposition \ref{Pohozaev}), which is sufficient to prove Theorem \ref{Bols_up}. To obtain  an analogous identity for Theorem \ref{Bols_low}, we begin by observing certain decay properties (Lemma \ref{u_weakBound}) of solutions when $|x|$ is sufficiently large. These properties allow us to control the total curvature outside a sufficiently large ball and, when  combined with an application of Jensen’s inequality (in the same spirit of \cite[Lemma 3.5]{Ali_Mancini}),  yield the estimate stated in Proposition \ref{Pohozaev_equal}. As a result, we establish a Pohozaev-type identity in Proposition \ref{Pohozaev_equal}, which plays a crucial role in the proof of Theorem \ref{Bols_low}.

We now  consider the following singular $\Q$-curvature equation in $\R^n$ ($n\geq 2$): 
\begin{equation}\label{Q1_equation}
	\left\{\begin{aligned}
		\Dn u &= (1+|x|^{n\al}) e^{nu}\quad \text{in}\; \R^n,\;-1<\al<0,\\
		\La_*:= &\int_{\R^n} (1+|x|^{n\al}) 
		e^{nu(x)}\dx< \infty.
	\end{aligned}\right.
\end{equation}

The above equation in dimension four has been studied by Hyder-Martinazzi in \cite{Ali_Martinazzi} (see also \cite{Struwe2020, Struwe2021} for some non-existence results) for $\alpha>0$. They provided a complete description (cf. \cite[Theorem 1.5]{Ali_Martinazzi}) of the range of $\La_*$ for the existence of a radial normal solutions of \eqref{Q1_equation} when $\al\in (0,1]$. Furthermore, for $\al>1,$ the authors gave a partial answer about the possible range of $\La_*,$ and a complete answer to this case is recently obtained by Li-Wei in \cite[Corollary 7.2]{WeiLi2023} with the help of Bol's inequalities.

Applying Theorem \ref{Bols_low}, we establish a necessary and sufficient condition on the total curvature $\La_*$ for which   radial normal solutions to \eqref{Q1_equation} exist.

\begin{theorem}\label{existence_coro}
	Let $-1<\al<0.$ Then there exists a radial normal solution to  \eqref{Q1_equation} if and only if 
	 $$\La_1\max\{-\al, 1+\al\}<\La_*<\La_1.$$
\end{theorem}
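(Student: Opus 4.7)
The plan is to establish the two implications separately.

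For necessity, I let $u$ be a radial normal solution of \eqref{Q1_equation} and extract information by viewing \eqref{Q1_equation} as a singular Liouville equation of the form \eqref{Q-equation} in two complementary ways, each feeding Theorem \ref{Bols_low}. The first is the direct splitting
\begin{equation*}
\Dn u = |x|^{n\alpha}\,Q_1(x)\,e^{nu}, \qquad Q_1(x) := 1 + |x|^{-n\alpha},
\end{equation*}
for which $-n\alpha > 0$ guarantees $Q_1 \in L^\infty_{\rm loc}(\R^n)$, while $Q_1 \geq 1$ with $Q_1 \not\equiv 1$; Theorem \ref{Bols_low} then yields a strict upper bound on the associated volume $\int_{\R^n} |x|^{n\alpha} e^{nu}\dx$. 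The complementary estimate comes from the Kelvin-type inversion $v(x) := u(x/|x|^2) - (2/n)\log|x|$, which is a radial normal solution of $\Dn v = (1+|x|^{n\beta})e^{nv}$ with $\beta := -\alpha \in (0,1)$; a routine change of variables preserves the total mass, and applying Theorem \ref{Bols_low} to the analogous splitting of this inverted equation gives a strict bound on $\int_{\R^n} e^{nu}\dx$. Combining the two estimates delivers the necessary range $\Lambda_1\max\{-\alpha, 1+\alpha\} < \Lambda_* < \Lambda_1$, the $\max$ reflecting whether the direct splitting or the Kelvin-inverted splitting is sharper, according to whether $\alpha \in (-1/2, 0)$ or $\alpha \in (-1, -1/2)$.

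For sufficiency, given any $\Lambda_*$ in the admissible interval, I would construct a radial normal solution by a continuity argument along a suitable one-parameter family of auxiliary equations, in the spirit of Hyder--Martinazzi \cite{Ali_Martinazzi}. Uniform a priori bounds would be obtained from Theorems \ref{Bols_up} and \ref{Bols_low} together with the Pohozaev identities of Propositions \ref{Pohozaev} and \ref{Pohozaev_equal}, while the decay bound of Lemma \ref{u_weakBound} precludes loss of mass at infinity along the continuation. A blow-up analysis then shows that the induced total mass varies continuously through the entire target interval, giving existence for every admissible $\Lambda_*$.

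The principal obstacle is the compactness step in the sufficiency direction: one must rule out, along the continuation path, both concentration near the singular point $\{0\}$ and non-compactness at infinity, especially as $\Lambda_*$ approaches either endpoint of the admissible interval. The interplay between the sharp Bol's-type bounds and the Pohozaev identities—the same ingredients already used to prove Theorems \ref{Bols_up} and \ref{Bols_low}—is precisely what permits this step, but the fine control needed to reach the boundary values requires careful bookkeeping.
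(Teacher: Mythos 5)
Your necessity argument has a genuine gap, and the key missing ingredient is the Pohozaev identity of Lemma \ref{Poho:singular}. The paper does not prove $\La_*<\La_1$ and the lower bound by stacking two Bol's inequalities; it uses
\begin{equation}
\frac{\La_*}{2\ga_n}(\La_*-2\ga_n)=\al\int_{\R^n}|x|^{n\al}e^{nu}\dx
\end{equation}
together with a single application of Theorem \ref{Bols_low} (with the splitting $Q=1+|x|^{-n\al}$). Since $\al<0$, the right side is negative, which gives $\La_*<\La_1$ immediately, and the comparison $\al\int|x|^{n\al}e^{nu}>\al\La_*$ gives $\La_*>\La_1(1+\al)$; feeding the Bol bound $\int|x|^{n\al}e^{nu}<\La_1(1+\al)$ back into the identity then factors as $(\La_*+\La_1\al)(\La_*-\La_1(1+\al))>0$ and yields $\La_*>-\al\La_1$. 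Your two-Bol scheme cannot reproduce this: the direct splitting gives $\int|x|^{n\al}e^{nu}<\La_1(1+\al)$ and your Kelvin-inverted splitting at best gives $\int e^{nu}<\La_1$, whose sum is only $\La_*<\La_1(2+\al)$, strictly weaker than $\La_*<\La_1$ for all $\al\in(-1,0)$; and neither upper bound on a summand can produce the lower bound $\La_*>\La_1\max\{-\al,1+\al\}$. There is a second, independent problem with the Kelvin step: the inversion sends $\infty$ to $0$, so $v(x)=u(x/|x|^2)-2\log|x|$ inherits a logarithmic singularity at the origin of strength $\beta_u-2$ where $\beta_u=2\La_*/\La_1$; unless $\La_*=\La_1$ (precisely what you are trying to exclude), $v$ is not a normal solution of the equation you write and Theorem \ref{Bols_low} does not apply to it as stated. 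One would first have to absorb the extra logarithm into a new singular exponent, at which point the problem is no longer the one you transformed from.

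On sufficiency, your outline (fixed-point plus blow-up analysis with Bol's and Pohozaev as a priori bounds) is the right spirit, but it misses the specific obstruction the paper addresses. A one-parameter continuation in $\rho=u(0)$, à la Hyder--Martinazzi, requires radial normal solutions to be uniquely determined by $u(0)$ so that $\rho\mapsto\La(\rho)$ is a well-defined continuous map; for \eqref{Q1_equation} in general dimension this uniqueness is not known, which is exactly why the paper instead prescribes $\La_*$ directly in a fixed-point scheme with two cutoffs ($\vp_\delta$ near the singularity and $\psi_\ep$ at infinity, Proposition \ref{prop-51}) and then establishes compactness as $\ep\to0$ and $\de\to0$ via Proposition \ref{propo-2} and Lemma \ref{lem-normal}. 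Without this device your continuity argument does not get off the ground, and the blow-up dichotomy you gesture at (concentration at the origin, loss of mass at infinity) needs the explicit rescalings and three-case analysis in the proof of Proposition \ref{propo-2}.
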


Notice that the PDE \eqref{Q1_equation} is supercritical for $\Lambda_*>\Lambda_1(1+\alpha)$ in the sense that the behavior of the nonlinearity $(1+|x|^{n\alpha})e^{nu}$ near the origin is  similar to $|x|^{n\alpha}e^{nu}$ (for $\alpha\in (-1,0)$), and a possible  blow-up phenomena  (in the class of normal solutions)  can not  be ruled-out if $\Lambda_*\geq \Lambda_1(1+\alpha)$. Nevertheless, for each $\rho\in\R$, one can show the existence of  a radial normal solution $u_\rho$ to \eqref{Q1_equation} with $u_\rho(0)=\rho$ by fixed point arguments (see, e.g., \cite{Ali_Martinazzi} for the case $n=4$ and $\alpha>0$).  In dimension $n=4$, as radial normal solutions are uniquely determined by their value at the origin,  it turns out that  the function $\rho\to\Lambda(\rho)$ is continuous. In particular, studying the behavior of $\Lambda(\rho)$ as $\rho\to\pm\infty$, one gets existence of radial normal solutions   for each admissible value of $\Lambda_*$, as  determined by the Bol's inequality.

 The main difficulty in proving the existence results  in our case lies in the fact  that the uniqueness  of radial normal solutions to \eqref{Q1_equation} is not known (for $n=2,4$, uniqueness can be obtained in the spirit of \cite{Ahmedou}). To overcome this difficulty, we establish existence for each prescribed $\Lambda_*$ satisfying $\La_1\max\{-\al, 1+\al\}<\La_*<\La_1$ by means of  fixed point arguments. To this end, we construct \emph{approximate} solutions in such a way that compactness can be achieved even in the supercritical regime, see Proposition \ref{propo-2}.

	\medskip 

 We recall from \cite{Martinazzi2013AIH,HuangYe,Ali2017APDE} that in dimension $n\geq5$, there are  radial conformal metrics with arbitrary total $\Q$-curvature, that is, for every $\Lambda_*\in (0,\infty)$,  \begin{align}\label{def-totalQ} \La_*:=\int_{\R^n}Q|x|^{n\al}e^{nu}\dx,  \end{align}  Eq.  \eqref{Q-equation}  with $Q\equiv 1$, $\alpha=0$, $n\geq5$ admits a radially symmetric solution (which is not normal).  Such solutions can be written in the form  $u=v+p$ for some nontrivial polynomial $p$,   and  $v$ satisfies  the integral equation \eqref{eq-normal} with $\alpha=0$,  $Q=e^{np}$. Moreover, the  conformal metrics $e^{2u}|\dx|^2$ and $e^{2v}|\dx|^2$ have the same total curvature $\Lambda_*$ as given in \eqref{def-totalQ}. This demonstrates that without suitable  control on the $\Q$-curvature, the total curvature associated  with  normal conformal metrics need not admit  either lower or upper bounds.  

Under suitable hypotheses on $Q,$ we prove  a  uniform bound on the total $\Q$-curvature $\Lambda_*$, which rules out the unbounded behavior described above. 

\begin{theorem}\label{total-curvature}
	 Let $Q\in L^\infty_{\rm loc}(\R^n)$ be a radial function such that 
	$$Q\geq 1+|x|^p\;\quad and\; 
		(Q-M|x|^p)^+\in L^1(\R^n),$$
	  for some $p\geq0$, $M\geq1$. 
	Then for any $\alpha>-1$ there exists $$C:=C\left(n,p,M,\alpha,\|Q\|_{L^\infty(B_1)},\|(Q-M|x|^p)^+\|_{L^1(B_1^c)} \right)>0$$  
	such that  for any radial normal solution $u$ of \eqref{Q-equation}, we have the total curvature 
	$$\La_*\leq C.$$
\end{theorem}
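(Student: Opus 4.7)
The plan is to decompose $Q \leq M|x|^p + (Q-M|x|^p)^+$, bound the $M|x|^p$ part by two applications of Theorem~\ref{Bols_low}, and treat the $L^1$-correction via a uniform pointwise estimate on $|x|^{n\al}e^{nu}$ outside the unit ball. Since $Q\geq 1$, Theorem~\ref{Bols_low} in its original form yields
\[\La = \int_{\R^n}|x|^{n\al}e^{nu}\dx \leq \La_1(1+\al).\]
To pick up a factor of $|x|^p$, set $\tilde\al := \al + p/n > -1$ and $\tilde Q := |x|^{-p}Q$; from $Q \geq 1+|x|^p \geq |x|^p$ we have $\tilde Q \geq 1$, and since the source $|x|^{n\al}Qe^{nu} = |x|^{n\tilde\al}\tilde Qe^{nu}$ is unchanged, $u$ remains a radial normal solution with respect to $(\tilde\al,\tilde Q)$. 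A second application of Theorem~\ref{Bols_low} then gives
\[\int_{\R^n}|x|^{n\al+p}e^{nu}\dx \leq \La_1(1+\al+p/n).\]

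Next I would estimate $\La_* \leq M\int|x|^{n\al+p}e^{nu}\dx + \int|x|^{n\al}(Q-M|x|^p)^+ e^{nu}\dx$, so that the first term is bounded by $M\La_1(1+\al+p/n)$. Splitting the remaining integral over $B_1$ and $B_1^c$: on $B_1$, $(Q-M|x|^p)^+\leq Q\leq \|Q\|_{L^\infty(B_1)}$, so this part is at most $\|Q\|_{L^\infty(B_1)}\La_1(1+\al)$. For the $B_1^c$ part the goal is a uniform pointwise bound $|x|^{n\al}e^{nu(x)}\leq C(n,\al,p)$ on $\{|x|\geq 1\}$, which would immediately yield
\[\int_{B_1^c}|x|^{n\al}(Q-M|x|^p)^+ e^{nu}\dx \leq C(n,\al,p)\,\|(Q-M|x|^p)^+\|_{L^1(B_1^c)},\]
and summing the three contributions would give the theorem.

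The pointwise bound is the main work. I would exploit that radial normal solutions are non-increasing in $|x|$, a consequence of the spherically averaged integral representation of $u$ essentially captured by Lemma~\ref{u_weakBound}. Combined with the volume estimate from the second application of Bol, for any $s\geq 1$ one has
\[e^{nu(s)}\int_{s/2}^{s}t^{n\al+p+n-1}\,\dt \leq \int_{s/2}^{s}e^{nu(t)}t^{n\al+p+n-1}\,\dt \leq \frac{\La_1(1+\al+p/n)}{|\S^{n-1}|},\]
and since $n\al+p+n>0$ the left-hand integral is at least $c(n,\al,p)\,s^{n\al+p+n}$, giving $e^{nu(s)}\leq C\,s^{-(n\al+p+n)}$, i.e.\ $|x|^{n\al}e^{nu(x)}\leq C|x|^{-p-n}\leq C$ on $\{|x|\geq 1\}$. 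The hardest step is really this pointwise upgrade from an integral bound, which forces one to know monotonicity of $u$ rather than merely decay. A minor subtlety in the second application of Theorem~\ref{Bols_low} is that $\tilde Q = |x|^{-p}Q$ need not lie in $L^\infty_{\rm loc}(\R^n)$ near the origin; this is harmless because the proof of the theorem depends only on the integral representation of $u$ and the integrability of the source term, both of which are preserved under the reparametrization.
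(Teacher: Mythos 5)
Your proposal is correct and follows the same overall architecture as the paper's proof: two applications of Theorem~\ref{Bols_low} (once with $(\al,Q)$ and once with $(\al+p/n,\,Q/|x|^p)$), followed by the decomposition $Q\le M|x|^p+(Q-M|x|^p)^+$ and a $B_1$/$B_1^c$ split, with the $B_1^c$ contribution controlled by a uniform pointwise bound on $|x|^{n\al}e^{nu}$. You also correctly flag the same subtlety the paper addresses, namely that $\tilde Q=Q/|x|^p$ fails local boundedness at the origin but the source $\tilde Q|x|^{n\tilde\al}=Q|x|^{n\al}$ is unchanged.

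Where you genuinely depart from the paper is the pointwise estimate on $B_1^c$. The paper splits into two cases: for $-1<\al\le 0$ it uses that $|x|^{n\al}e^{nu}$ itself is radially non-increasing and bounds $I$ by $e^{nu(1)}$, which is in turn controlled via the unshifted volume bound; for $\al>0$ it invokes a separate lemma (its Lemma~\ref{lem:enu_bound}) that introduces $\tilde u=u+\al\log|x|$, proves $\tilde u'\le\al/|x|$, and combines an annulus-averaged bound with this derivative control. Your argument is more compact: you exploit the shifted volume bound $\int_{\R^n}|x|^{n\al+p}e^{nu}\dx\le\La_1(1+\al+p/n)$ together with the monotonicity of $u$ (which holds since $\De u<0$ by $Q\ge 1$, as the paper establishes via its radial integration-by-parts identity \eqref{rad-relation} rather than Lemma~\ref{u_weakBound} as you wrote, a minor misattribution), integrating over the annulus $[s/2,s]$ to get $e^{nu(s)}\le Cs^{-(n\al+p+n)}$ directly. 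Since $n\al+p+n>0$ automatically for $\al>-1$, $p\ge0$, this works for all admissible $\al$ in one stroke, dispensing with the case split and the derivative estimate on $\tilde u$. The two approaches are comparable in depth, but yours is a modest streamlining: by shifting by $p$ before extracting the pointwise bound, the exponent is pushed into the regime where a single annulus argument suffices.
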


Let us emphasize here that  a lower bound on $Q$ alone, namely $Q\geq 1+|x|^p$ is not sufficient to conclude the above theorem, see Remark \ref{example:Q}  below. It would be interesting to determine  whether  the above theorem remains valid when the constant $C$ depending on the  $L^1$ norm of $Q$ rather than its   $L^\infty$ norm.

As an another  application of Theorem \ref{Bols_low}, we prove the following existence result:  

\begin{theorem} \label{f_existence}
	%Let $f\in C^0_{\rm{rad}}(\R^n)\cap L^1(\R^n)$ be non-negative, where $\mathcal{C}^0_{\rm rad}(\R^n)$ is the space of all continuous radial functions on $	\R^n.$ 
	Let $f\in   L^1(\R^n)$ be a non-negative continuous radially symmetric function on $	\R^n.$  Then for every $ \rho\in\R,$ there exists a radial normal solution $u_\rho$ to \eqref{Q-equation}  with $Q=1+f$ and $u_\rho(0)=\rho$.  Moreover, $$ \int_{\R^n}Qe^{nu_\rho}\dx\to \La_1(1+\al)\text{ as }\rho\to\pm\infty.$$
	\end{theorem}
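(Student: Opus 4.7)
\emph{Existence.} For each $\rho\in\R$, I would consider the operator
\[
T(u)(x):=\rho+\frac{1}{\gamma_n}\int_{\R^n}\log\frac{|y|}{|x-y|}\,|y|^{n\alpha}(1+f(y))e^{nu(y)}\dy,
\]
on radial continuous functions; its fixed points are precisely the radial normal solutions of \eqref{Q-equation} with $Q=1+f$ and $u(0)=\rho$. Any such fixed point satisfies the a~priori bound $\La\leq\La_1(1+\alpha)$ by Theorem~\ref{Bols_low}. To deal with the fact that $f\in L^1$ need not be bounded, I would truncate $f$ to $f_R:=\min\{f,R\}\chi_{B_R}$, construct radial normal solutions $u_{\rho,R}$ of the truncated problem with $u_{\rho,R}(0)=\rho$ using the approximate-solution machinery of Proposition~\ref{propo-2} (as in the proof of Theorem~\ref{existence_coro}), and pass to the limit $R\to\infty$: the uniform Bol bound together with the $L^1$-integrability of $f$ provides the compactness needed to extract a convergent subsequence whose limit is the desired $u_\rho$.

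\emph{Asymptotic.} Let $\rho_k\to\pm\infty$ and set $\La_k:=\La_*(u_{\rho_k})$. Perform the conformal rescaling
\[
w_k(y):=u_{\rho_k}(y/\lambda_k)-(1+\alpha)\log\lambda_k,\qquad \lambda_k:=e^{\rho_k/(1+\alpha)},
\]
so that $w_k(0)=0$, $w_k$ is a radial normal solution of $(-\Delta)^{n/2}w=|y|^{n\alpha}\tilde Q_k(y)e^{nw}$ with $\tilde Q_k(y):=1+f(y/\lambda_k)$, and both $\La$ and $\La_*$ are preserved by the rescaling, in particular $\La_*(w_k)=\La_k$. If $\rho_k\to+\infty$ then $\lambda_k\to+\infty$ and, by continuity of $f$, $\tilde Q_k\to 1+f(0)$ locally uniformly; if $\rho_k\to-\infty$ then $\lambda_k\to 0$ and the scaling identity $\int_{B_R}f(y/\lambda_k)\dy=\lambda_k^n\int_{B_{R/\lambda_k}}f\,\dx\leq\lambda_k^n\|f\|_{L^1}\to 0$ gives $\tilde Q_k\to 1$ in $L^1_{\rm loc}$. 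In either case, extracting a subsequential limit $w_k\to w_\infty$ via the Bol bound and the decay estimates of Lemma~\ref{u_weakBound}, the limit $w_\infty$ is a radial normal solution of \eqref{Q-equation} with a \emph{constant} coefficient $Q_\infty\in\{1,\,1+f(0)\}$. A direct computation — reducing to the case $Q\equiv 1$ by the shift $u\mapsto u-\tfrac{1}{n}\log Q_\infty$ and invoking the equality case of Theorem~\ref{Bols_low} — shows that any radial normal solution with constant positive coefficient has total curvature exactly $\La_1(1+\alpha)$, hence $\La_*(w_\infty)=\La_1(1+\alpha)$. Mass conservation, established using the Pohozaev-type identity of Proposition~\ref{Pohozaev_equal} to rule out escape of mass to infinity in the $y$-variable, then yields $\La_k=\La_*(w_k)\to\La_*(w_\infty)=\La_1(1+\alpha)$.

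The principal obstacle is establishing mass conservation in the rescaling limit: the rescaled coefficients $\tilde Q_k$ do not converge uniformly, uniqueness of radial normal solutions with prescribed value at the origin is unavailable, and one must control the tail of $w_k$ at $|y|\to\infty$ (which in the original $x$-variable corresponds to the region away from the concentration/dispersion scale). Overcoming this calls for the Pohozaev-type identity of Proposition~\ref{Pohozaev_equal} together with the $L^1$-integrability of $f$, which together force the rescaled solutions $w_k$ to converge strongly enough that the total curvatures $\La_k$ saturate the Bol threshold $\La_1(1+\alpha)$ in the limit.
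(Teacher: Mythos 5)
Your $\rho\to+\infty$ rescaling coincides exactly with the paper's (the paper's $\eta_\rho(x)=u_\rho(r_\rho x)-\rho$ with $r_\rho=e^{-\rho/(1+\alpha)}$ is your $w_k$), and the overall strategy of an approximation with a uniform Bol bound followed by compactness and quantization of the limit is correct. Two small inaccuracies: the paper's approximation multiplies the entire nonlinearity by the damping factor $e^{-\ep|x|^2}$ (following the fixed-point construction of Hyder--Martinazzi) rather than truncating $f$, and Proposition~\ref{propo-2} is not the right reference for building approximants with $u(0)=\rho$ prescribed, since that proposition prescribes $\La_*$ rather than the value at the origin.

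The genuine gap is in the $\rho\to-\infty$ case, where you apply the same rescaling with $\lambda_k\to 0$. This introduces the compactness problem that you yourself flag as the ``principal obstacle'' but do not resolve. The rescaled coefficient $\tilde Q_k(y)=1+f(y/\lambda_k)$ converges to $1$ only in $L^1_{\rm loc}$; since a radial $f\in C^0\cap L^1$ can be unbounded, $\tilde Q_k$ need not be uniformly bounded in $L^\infty_{\rm loc}$ (or in any $L^p_{\rm loc}$, $p>1$), so the Bol volume bound alone does not deliver the elliptic estimates needed to extract a $C^0_{\rm loc}$ limit $w_\infty$, and ruling out escape of mass remains open. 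The paper avoids the rescaling entirely for $\rho\to-\infty$. Using monotonicity of $u_\rho$ together with the uniform bound $|x|^{n\alpha}e^{nu_\rho}\le C$ on $B_1^c$ (a variant of Lemma~\ref{lem:enu_bound}, inherited uniformly from the approximations), dominated convergence gives $\int_{\R^n} f|x|^{n\alpha}e^{nu_\rho}\dx\to 0$ directly in the original variable. Then, in the Pohozaev identity from Proposition~\ref{Pohozaev_equal} applied to $u_\rho$, the error term $\int (x\cdot\nabla h_\rho)|x|^{n\alpha}e^{nu_\rho}\dx$ is rewritten by integration by parts and controlled by $C\,\tilde\La(\rho)\int_{\R^n} f|x|^{n\alpha}e^{nu_\rho}\dx\to 0$, using $\tilde\La(\rho)\le\La_1(1+\alpha)$ from Theorem~\ref{Bols_low}. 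This forces $\tilde\La(\rho)\bigl(\tilde\La(\rho)-(1+\alpha)\La_1\bigr)\to 0$, and since $\tilde\La(\rho)$ stays bounded away from zero (via Corollary~\ref{lamda_al}), one gets $\tilde\La(\rho)\to\La_1(1+\alpha)$ and hence $\La(\rho)\to\La_1(1+\alpha)$. You should replace your rescaling step for $\rho\to-\infty$ with this direct Pohozaev estimate.
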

	
	For each $\rho\in\R$, the existence of an  \emph{approximate} radial normal solution $u_{\rho,\ep}$ to \eqref{Q-equation} with $Q=1+f$, $u_{\rho,\ep}(0)=\rho$ follows in a standard way (see, e.g., \cite{Ali_Martinazzi}). To obtain compactness of the solutions $u_{\rho,\ep}$ as $\ep\to0$, one requires that the total $\Q$-curvature corresponding to these normal solutions be uniformly bounded. In general, such a   uniform bound on the total $\Q$-curvature is obtained using  Pohozaev-type identity (see, e.g.,  \cite[Lemma 6.5]{Ali_Martinazzi}, \cite[Eq. (20)]{WeiYe2008}), where the sign, or the precise structure  of $x\cdot\nabla Q$ plays an important role. However, in our setting,  under the hypothesis that  $f\in L^1(\R^n)$, we do not have  sufficient  control over the term $x\cdot \nabla f$ (even if $f$ is $C^1$). Nevertheless, the uniform volume bound established in Theorem \ref{Bols_low} compensates for the lack of control on $x\cdot\nabla f$, and yields the desired compactness.

{}
The rest of the article is organized as follows. 
 Section \ref{Sec:main_result} consists of various estimates related to the asymptotic behaviour of the solutions, and we derive some Pohozaev-type identities/inequalities  in this section. The proofs of the Bol's inequalities (Theorems \ref{Bols_up} and \ref{Bols_low}) are the contents of Section \ref{Sec:proof_Bol}. We prove Theorem \ref{existence_coro} in Section \ref{Sec:existence} and Theorem \ref{f_existence} in Section \ref{Sec:existence_f}. Section \ref{Sec:bound_proof}  is devoted to the proof of Theorem \ref{total-curvature}.

\section{Decay estimates and Pohozaev identities}\label{Sec:main_result}
In this section, we establish various decay estimates of the solutions of \eqref{Q-equation}. Throughout the article,  $C$ denotes  a generic positive constant whose value  may change from line to line. We write  $B_r(x)$ for  the ball of radius $r>0$ centered at $x$;  when the center is the origin, we simply write $B_r$ instead of $B_r(0).$ 

Since $Q\in L^\infty_{\rm loc}(\R^n)$, for each $\alpha>-1$, any normal solution of \eqref{Q-equation} will be in $C^s_{\rm loc}(\R^n)\cap C^1(\R^n\setminus\{0\})$. This fact will be used throughout the article. 

\subsection{Decay properties} 

For a normal solution   $u$   of  \eqref{Q-equation} we set  
\begin{equation}\label{def:beta}
	\be := \frac{1}{\ga_n}\int_{\R^n} |x|^{n\al} Q (x)e^{nu(x)}\dx.
\end{equation}

\begin{lemma}\label{u_UpLow}
	There exists $R>>1$ such that \begin{enumerate}[(i)]
		\item if $Q^+$ has compact support,  then 
		$$u(x)\leq -\be\log(|x|)+C,\;\text{for all}\;|x|> R,$$
		
		\item if $Q^-$ has compact support, then 
		$$u(x)\geq -\be\log(|x|)-C,\;\text{for all}\;|x|> R,$$
	\end{enumerate}
	where $Q^{\pm}=\max\{\pm Q, 0\}$ and $C$ is some constant.
\end{lemma}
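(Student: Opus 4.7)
The plan is to rewrite the integral representation in a way that isolates $u(x)+\be\log|x|$ and makes $\be$ appear naturally. Starting from
\[
u(x) = \frac{1}{\ga_n}\int_{\R^n} \log\frac{1+|y|}{|x-y|}\,|y|^{n\al}Q(y)e^{nu(y)}\,\dy + c,
\]
and using \eqref{def:beta} to write $\be\log|x| = \frac{\log|x|}{\ga_n}\int_{\R^n}|y|^{n\al}Q(y)e^{nu(y)}\,\dy$, one obtains
\[
u(x) + \be\log|x| - c = \frac{1}{\ga_n}\int_{\R^n} \log\frac{|x|(1+|y|)}{|x-y|}\,|y|^{n\al}Q(y)e^{nu(y)}\,\dy.
\]

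The key elementary observation is that for $|x|\ge1$ and every $y\in\R^n$,
\[
|x|(1+|y|) \;=\; |x|+|x||y| \;\ge\; |x|+|y| \;\ge\; |x-y|,
\]
so $\log\dfrac{|x|(1+|y|)}{|x-y|}\ge 0$. Consequently, the integrand in the rewritten identity carries a fixed sign determined by the sign of $Q(y)$, which is precisely what one exploits in each of the two cases.

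For $(i)$ I split $Q=Q^+-Q^-$. The $Q^-$-piece contributes $\le 0$ by the sign observation. For the $Q^+$-piece, assuming $\mathrm{supp}(Q^+)\subset\overline{B_{R_0}}$ and taking $R:=2R_0$, the condition $|x|\ge R$ forces $|x-y|\ge|x|/2$ whenever $|y|\le R_0$, hence
\[
\log\frac{|x|(1+|y|)}{|x-y|}\le \log\bigl(2(1+R_0)\bigr) \qquad\text{on }\{|y|\le R_0\}.
\]
Combining this with the finiteness $\int_{\R^n}|y|^{n\al}|Q|e^{nu}\,\dy<\infty$ coming from \eqref{Q-equation}, the $Q^+$-piece is bounded above by a constant, yielding $(i)$. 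Case $(ii)$ is perfectly symmetric: the $Q^+$-integrand is now $\ge0$ and the $Q^-$-piece, supported in $\overline{B_{R_0}}$, is controlled from below by the same calculation, giving the stated lower bound.

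The argument is essentially algebraic once the right rewriting is chosen, and I do not anticipate a genuine obstacle. The only points that require care are choosing $R$ large enough so that $\mathrm{supp}(Q^{\pm})$ sits well inside $B_{|x|/2}$ (this is what allows the pointwise bound of the logarithmic kernel on the support), and invoking the integrability hypothesis in \eqref{Q-equation} to absorb the compact-support integral into a single constant depending only on $Q$, $u$ and $R_0$.
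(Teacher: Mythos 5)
The paper does not supply a proof of this lemma (it is declared ``quite standard'' and left to the reader), so there is nothing to compare against; your argument is correct and cleanly fills the gap. The rewriting $u(x)+\be\log|x|-c=\frac{1}{\ga_n}\int_{\R^n}\log\frac{|x|(1+|y|)}{|x-y|}|y|^{n\al}Q(y)e^{nu(y)}\,\dy$, the observation that this logarithmic kernel is nonnegative for $|x|\ge 1$, and the pointwise bound $\log\frac{|x|(1+|y|)}{|x-y|}\le\log\bigl(2(1+R_0)\bigr)$ on the compact support (after shrinking to $|x|\ge\max\{2R_0,1\}$) are exactly the standard steps one expects here; the only cosmetic adjustment is that $R$ must be taken at least $1$ (not merely $2R_0$) so that the sign of the kernel is guaranteed, which is harmless given the lemma only asks for $R\gg1$.
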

\begin{proof} The proof is quite standard  and we leave it to the reader. 
	
%	\noi $(i)$ Since $u$ is a normal solution of \eqref{Q-equation}, it satisfies \eqref{eq-normal}. Observe that if $|x|\geq 1,$  we have	\begin{equation}\label{xy_x}	\frac{1}{|x|}\leq \frac{1+|y|}{|x-y|}.	\end{equation}	Thus, using \eqref{xy_x}, we get for all $|x|\geq 1$ 	\begin{align}\label{u_bound}		u(x) &= \frac{1}{\ga_n}\int_{\R^n}\log \left\{\frac{(1+|y|) |x|}{|x-y|}\right\}|y|^{n\al} e^{nu(y)} Q(y)\dy -\be\log(|x|) +c\nonumber\\		& \leq \frac{1}{\ga_n}  \int_{\R^n}\log \left\{\frac{(1+|y|) |x|}{|x-y|}\right\}|y|^{n\al} e^{nu(y)} Q^+(y)\dy -\be\log(|x|) +c\nonumber\\		&= \frac{1}{\ga_n} \int_{\text{supp}(Q^+)}\log \left\{\frac{(1+|y|) |x|}{|x-y|}\right\}|y|^{n\al} e^{nu(y)} Q^+(y)\dy -\be\log(|x|) +c.  	\end{align}	By assumption $\text{supp}(Q^+)$ is compact and hence, we can choose $R_1>1$ so that $\text{supp}(Q^+)\subset B_{R_1}.$ 	Now, 	\begin{equation}\label{xy_bound}	\frac{(1+|y|) |x|}{|x-y|} \leq \left(1+\frac{|y|}{|x-y|}\right)(1+|y|)\nonumber	 \leq 2(1+R_1),\;\text{for all}\;|x|\geq 2R_1\;\text{and}\;|y|<R_1.	\end{equation}Therefore, using the above estimate in \eqref{u_bound}, we obtain  for $|x|\geq R:=2R_1$,	\begin{align}		u(x) &\leq \frac{1}{\ga_n} \log[2(1+R_1)] \int_{B_{\frac{R}{2}}}|y|^{n\al} e^{nu(y)} Q^+(y)\dy -\be\log(|x|) +c\\		& = -\be\log(|x|) +C.	\end{align}	\noi $(ii)$ The proof follows using similar method as used in the previous case. 
\end{proof}

\begin{corollary}\label{lamda_al}
	Let $\be$ be defined by \eqref{def:beta}. If $Q\geq 1$, then we have  $\be>1+\al.$
\end{corollary}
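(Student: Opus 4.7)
The plan is to combine the lower asymptotic bound on $u$ provided by Lemma \ref{u_UpLow} with the finite-volume assumption \eqref{Volume} to force the strict inequality $\beta > 1+\alpha$.

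First, observe that under the hypothesis $Q\geq 1$ we have $Q^-\equiv 0$, which trivially has compact support. Therefore Lemma \ref{u_UpLow}(ii) applies and yields some $R\gg 1$ and $C>0$ for which
\begin{equation*}
u(x)\geq -\beta\log|x|-C \qquad \text{for all } |x|>R.
\end{equation*}
Exponentiating and multiplying by $|x|^{n\alpha}$ gives $|x|^{n\alpha}e^{nu(x)}\geq e^{-nC}|x|^{n\alpha-n\beta}$ on $\{|x|>R\}$.

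Next, I would plug this into the finiteness condition \eqref{Volume}. Since
\begin{equation*}
\infty > \Lambda \geq \int_{|x|>R} |x|^{n\alpha}e^{nu(x)}\,dx \geq e^{-nC}|\S^{n-1}|\int_R^\infty r^{n\alpha-n\beta+n-1}\,dr,
\end{equation*}
the tail integral on the right must be finite, which happens if and only if $n\alpha-n\beta+n-1<-1$, i.e.\ $\beta>1+\alpha$. Since $Q\geq 1$ in particular implies $Q\not\equiv 0$ (so $\beta>0$ is well-defined and positive), this is the desired conclusion.

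There is no substantive obstacle: the argument is essentially a two-line computation once Lemma \ref{u_UpLow}(ii) is in hand. The only mildly delicate point is to notice that the integrability forces \emph{strict} inequality $\beta>1+\alpha$ rather than $\beta\geq 1+\alpha$, which is what distinguishes this corollary from the weaker (and essentially trivial) bound $\beta\geq \Lambda/\gamma_n$ one could get directly from $Q\geq 1$ combined with Theorem \ref{Bols_low}.
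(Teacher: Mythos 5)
Your proof is correct and follows the same core strategy as the paper: invoke Lemma~\ref{u_UpLow}(ii) (valid since $Q\geq 1$ means $Q^-\equiv 0$) to get $u(x)\geq -\beta\log|x|-C$ for $|x|>R$, then use the finiteness of $\Lambda$ to force $\beta>1+\alpha$. The only difference is in the final step: you integrate $|x|^{n\alpha-n\beta}$ directly over $\{|x|>R\}$ in polar coordinates and use that $\Lambda<\infty$ forces the exponent past the convergence threshold, whereas the paper instead lower-bounds $\int_{B_{|x|/2}(x)}|y|^{n\alpha}e^{nu}\,dy\geq C|x|^{-n(\beta-\alpha-1)}$ and uses that this tail integral must tend to $0$. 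These are equivalent (finite $L^1$ norm $\Leftrightarrow$ vanishing tails), and your version is arguably the more direct one. Both correctly yield the strict inequality, since the borderline exponent $\beta=1+\alpha$ still gives a divergent $\int_R^\infty r^{-1}\,dr$. One small caution about your closing aside: invoking Theorem~\ref{Bols_low} at this stage would be circular, since its proof depends on Proposition~\ref{Pohozaev_equal}, which in turn relies on this corollary via \eqref{lam:2alpha}; you don't actually use it, so the proof stands, but it's worth being aware of the dependency order.
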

\begin{proof}
	By assumption, $Q^-\equiv 0.$ Therefore, using Lemma \ref{u_UpLow}-$(ii),$ we get for $|x|\geq R>>1$  
	\begin{align}
\int_{B_{\frac{|x|}{2}}(x)}|y|^{n\al} e^{nu(y)}\dy &\geq C \int_{B_{\frac{|x|}{2}}(x)}\frac{1}{|y|^{n\left(\be-\al\right)}}\dy 
		\geq  \frac{C}{|x|^{n\left(\be-\al-1\right)}}. 
	\end{align}
	Since $Q\geq 1$ and $Q|x|^{n\alpha}e^{nu}\in L^1(\R^n)$, we have $$\int_{B_{\frac{|x|}{2}}(x)}|y|^{n\al} e^{nu(y)}\dy\to0\quad\text{ as }|x|\to\infty ,$$ and hence,	$\be>1+\al.$
\end{proof}

A proof of the following lemma can be found in \cite[Lemma 2.4]{Lin1998}. 
\begin{lemma}\label{u_weakBound}
	Let $Q\geq 0$.  Then for $\ep>0$ there exists $R>>1$ such that 
	\begin{equation}
		u(x) \leq \left(-\be+\ep\right)\log(|x|)+ \frac{1}{\ga_n}\int_{B_1(x)}\log\left(\frac{1}{|x-y|}\right)|y|^{n\al} Q(y)e^{nu(y)}\dy,
	\end{equation}
	for all $|x|\geq R.$
\end{lemma}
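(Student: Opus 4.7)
The plan is to start from the representation \eqref{eq-normal}, viewing $u(x)$ as a logarithmic potential against the finite Borel measure $d\mu := |y|^{n\al}Q(y)e^{nu(y)}\dy$ of total mass $\ga_n\be$. First I would fix $R_0=R_0(\ep)$ so large that $\mu(\R^n\setminus B_{R_0})<\ga_n\ep/4$, and then for $|x|\geq 2R_0+1$ split the domain of integration into four pieces: $B_{R_0}$, $B_1(x)$, $\{|y|\geq 2|x|\}$, and the annular remainder $\{R_0\leq|y|<2|x|\}\setminus B_1(x)$. On each piece an elementary pointwise estimate on $\log\frac{1+|y|}{|x-y|}$ will apply.

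On $B_{R_0}$ the bulk of $\mu$ sits, and $|x-y|\geq|x|/2$ together with $|y|\leq R_0$ gives $\log\frac{1+|y|}{|x-y|}\leq -\log|x|+O(1)$, producing a contribution $\leq -(\be-\ep/4)\log|x|+O(1)$ --- this is where the leading $-\be\log|x|$ is generated. On $B_1(x)$ the singular piece $\log\frac{1}{|x-y|}$ is retained as the target integral, while $\log(1+|y|)\leq \log|x|+1$ paired with $\mu(B_1(x))\leq \mu(\R^n\setminus B_{R_0})<\ga_n\ep/4$ contributes only an extra $\ep\log|x|/4+O(1)$. On $\{|y|\geq 2|x|\}$, the bound $|x-y|\geq|y|/2$ together with $1+|y|\leq 2|y|$ gives $\log\frac{1+|y|}{|x-y|}\leq \log 4$, so this region contributes only $O(1)$. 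On the annular remainder, $|x-y|\geq 1$ and $|y|<2|x|$ yield $\log\frac{1+|y|}{|x-y|}\leq \log|x|+O(1)$, integrated against mass $<\ga_n\ep/4$, contributing another $\ep\log|x|/4+O(1)$.

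Summing the four contributions gives $u(x)-c\leq (-\be+3\ep/4)\log|x|+O(1)+\frac{1}{\ga_n}\int_{B_1(x)}\log\frac{1}{|x-y|}|y|^{n\al}Q(y)e^{nu(y)}\dy$, and the residual $c+O(1)$ is absorbed into an additional $\ep\log|x|/4$ once $|x|\geq R$ for $R$ large enough, which yields the claim.

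The hard part, as far as I can see, is purely bookkeeping: allocating the $\ep$-budget across the four regions and choosing $R_0$ before letting $|x|\to\infty$. Importantly, no moment bound such as $\int\log(1+|y|)\,d\mu<\infty$ is needed --- only the finiteness of $\mu$ itself is used, which is what makes the estimate robust enough to be stated for every $\ep>0$ under the mild hypothesis $Q\geq 0$.
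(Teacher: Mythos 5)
Your proof is correct. The paper does not give its own proof here; it simply cites Lin \cite{Lin1998}, Lemma~2.4, and the four-region decomposition of the logarithmic potential that you carry out (far mass near the origin producing the leading $-\be\log|x|$, the unit ball around $x$ retained as the residual term, the far annulus and the exterior each contributing at most $\tfrac{\ep}{4}\log|x|+O(1)$ because the tail mass is at most $\ga_n\ep/4$) is precisely the classical argument used there, with the constants then absorbed by taking $|x|\geq R$ large.
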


\subsection{Pohozaev-type identities}
In this subsection, we derive some Pohozaev-type  identities (or inequalities)  associated with  normal solutions  of  \eqref{Q-equation}.
We define 
\begin{equation}\label{v_standard}
	v(x)=\frac{1}{\ga_n}\int_{\R^n}\log \left(\frac{1+|y|}{|x-y|}\right)|y|^{n\al} e^{nu(y)}\dy.
\end{equation}

\begin{lemma}\label{lem:bound_gradv}
	Let $u$ be a normal solution of \eqref{Q-equation} for some $Q\in L^\infty_{l\rm oc}(\R^n)$, and let $v$ be as defined in \eqref{v_standard}. Then there exists $\delta >0$ such that for  every $0<|x|\leq 1,$ we have 
	\begin{equation}
	|\nabla u(x)|+	|\nabla v(x)|\leq C\left(1+|x|^{\de-1}\right).
	\end{equation}
\end{lemma}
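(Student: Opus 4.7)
The plan is to differentiate the integral representations \eqref{eq-normal} and \eqref{v_standard} under the integral sign. Since $\nabla_x \log\frac{1+|y|}{|x-y|} = -\frac{x-y}{|x-y|^2}$, this yields the pointwise bound
\begin{equation*}
|\Gr u(x)| + |\Gr v(x)| \leq \frac{1}{\ga_n} \int_{\R^n} \frac{|y|^{n\al}\,(1+|Q(y)|)\,e^{nu(y)}}{|x-y|}\,\dy.
\end{equation*}
The differentiation is justified for $x \neq 0$ by dominated convergence: the kernel $|x-y|^{-1}$ is locally integrable in $y$ for $n \geq 2$, while the weight $|y|^{n\al}(1+|Q(y)|)e^{nu(y)}$ lies in $L^1(\R^n)$ by combining the integrability assumption in \eqref{Q-equation}, the local $L^\infty$ bound on $Q$, and the finite-volume condition \eqref{Volume}. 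For $\al \geq 0$, the weight $|y|^{n\al}$ is bounded on $B_2$, and the elementary estimate $\int_{B_R(x)}|x-y|^{-1}\,\dy \leq CR^{n-1}$ (valid for $n \geq 2$) immediately gives $|\Gr u| + |\Gr v| \leq C$ on $B_1\setminus\{0\}$, so any $\de \geq 1$ works.

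The nontrivial case is $-1 < \al < 0$. Writing $r := |x| \in (0,1]$, I split each integral into four regions and bound them separately:
\begin{enumerate}[(a)]
\item On $\{|y| \leq r/2\}$, use $|x-y| \geq r/2$ to obtain the contribution $\leq Cr^{-1}\int_{B_{r/2}}|y|^{n\al}\,\dy = Cr^{n(\al+1)-1}$.
\item On $\{r/2 \leq |y| \leq 2r\}$, use $|y|^{n\al} \leq Cr^{n\al}$ and the inclusion $\{|y|\leq 2r\}\subset B_{3r}(x)$ to get $\leq Cr^{n\al}\int_{B_{3r}(x)}|x-y|^{-1}\,\dy \leq Cr^{n(\al+1)-1}$.
\item On $\{2r \leq |y| \leq 2\}$, use $|x-y| \geq |y|/2$ to obtain $\leq C\int_{2r}^{2} s^{n(\al+1)-2}\,\ds$, which is either bounded, equal to $C\log(1/r)$, or of order $r^{n(\al+1)-1}$.
\item On $\{|y| \geq 2\}$, use $|x-y| \geq |y|/2 \geq 1$ together with the global $L^1$ bound on $|y|^{n\al}(1+|Q|)e^{nu}$ to obtain a uniform constant bound.
\end{enumerate}
The dominant contribution is $r^{n(\al+1)-1}$, so choosing any $\de \in (0, \min(1, n(\al+1)))$ yields the required estimate; the borderline logarithmic subcase in (c) is absorbed via $\log(1/r) \leq C_\de\,r^{\de-1}$ for $\de < 1$.

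The main technical obstacle is region (b) in the case $\al < 0$, where the weight $|y|^{n\al}$ (singular at the origin) and the Riesz-type kernel $|x-y|^{-1}$ (singular at $x$) become simultaneously problematic in an annulus whose inner and outer radii are both comparable to $r$; the annular split combined with the triangle inequality $|y - x| \leq |y| + |x| \leq 3r$ resolves this cleanly, and the resulting exponent $n(\al+1)-1 > -1$ ensures local integrability of $\Gr v$ in $B_1$.
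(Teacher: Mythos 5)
Your proof is correct and follows essentially the same strategy as the paper: differentiate under the integral sign, then split the domain into regions that isolate the singularity of the kernel $|x-y|^{-1}$ at $y=x$ from the singularity of the weight $|y|^{n\alpha}$ at the origin, and treat the far field separately. The paper uses a three-region decomposition ($D_1 = \{|y-x|<|x|/2\}$, $D_2 = \{|y|\geq 2|x|\}$, $D_3$ the remainder, with $D_2$ then subsplit at $|y|=1$) while you use a four-region split at radii $r/2$, $2r$, and $2$, but the resulting estimates are identical in spirit and strength, and your explicit handling of the borderline logarithmic case and of $\alpha\geq 0$ is a minor tidying rather than a different method.
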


\begin{proof}
	Let $x\in \R^n$ such that $0<|x|\leq 1.$ We decompose $\R^n$ as $\R^n=D_1\cup D_2\cup D_3,$ where 
	\begin{align}
		D_1:=\left\{y\in\R^n: |y-x|<\frac{|x|}{2}\right\}, \;D_2:=\left\{y\in \R^n: |y|\geq 2|x|\right\},\;D_3:=\R^n\setminus (D_1\cup D_2).
	\end{align}
	From the definition \eqref{v_standard} of $v$, one has
	\begin{align}
		|\nabla v(x)| &\leq C \int_{\R^n}\frac{1}{|x-y|} |y|^{n\al}e^{nu(y)}\dy.
	\end{align}
	Now, since $u\in C^0(\R^n)$,
	we obtain 
	\begin{align}
		I_1:=\int_{D_1}\frac{1}{|x-y|} |y|^{n\al}e^{nu(y)}\dy & \leq C|x|^{n\al} \int_{D_1}\frac{1}{|x-y|}\dy 
		\leq   
		C  |x|^{n\al+n-1}.
	\end{align}
	Similarly, 
	\begin{align}
		I_2:=\int_{D_2}\frac{1}{|x-y|} |y|^{n\al}e^{nu(y)}\dy & =\left\{\int_{\{ |y|\leq 1 \}\cap D_2}+\int_{\{|y|> 1\}\cap D_2}\right\} \frac{1}{|x-y|} |y|^{n\al}e^{nu(y)}\dy\\
		& \leq C \int_{2|x|\leq |y|\leq 1} |y|^{n\al-1}\dy+C\\  &\leq C(1+|x|^{\delta-1}), 
		%& \leq C \int_{2|x|}^{1} r^{n\al+n-2}\dr\leq C|x|^{n\al+n-1}.
	\end{align} provided $\delta\in (0, n\alpha+n)$. 
	Proceeding in the same way, one can derive that 
	\begin{equation}
		I_3:=\int_{D_3}\frac{1}{|x-y|} |y|^{n\al}e^{nu(y)}\dy\leq  C(1+|x|^{\delta-1}). 
	\end{equation}
Combining the above estimates we conclude   $|\nabla v(x)|\leq C(1+|x|^{\delta-1}).$  
Since $Q\in L^\infty_{\rm loc}(\R^n)$,  analogous estimates for $u$ can be obtained in a similar way. 
\end{proof}

Proof of the following proposition is in the spirit of \cite{WeiLi2023,  Xu2005}.

\begin{proposition}\label{Pohozaev}
    Let $u$ be a normal solution of \eqref{Q-equation} with $Q\in L^\infty_{\rm loc}(\R^n)$. Suppose $v$ is as defined by \eqref{v_standard}. Then there exists a sequence $\{R_k\}$ with $R_k\ra \infty$ such that the following identity holds:
    \begin{equation}\label{eq:pohozaev}
    \frac{\La}{2\ga_n}(\La-2\ga_n)\geq \frac{1}{n}\limsup_{k\ra\infty}\int_{B_{R_k}}(x\cdot \nabla Q_0(x)) e^{nv(x)}\dx,
    \end{equation}
    where $\La$ is given by \eqref{Volume} and $Q_0(x):=|x|^{n\al}e^{n(u-v)(x)}.$
\end{proposition}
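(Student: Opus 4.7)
The plan is to compute the auxiliary quantity $I_R:=\int_{B_R}(x\cdot\nabla v)\,Q_0 e^{nv}\dx$ in two complementary ways. The starting observation is that, by the definition of $v$ in \eqref{v_standard} together with the fundamental solution property of $\log\tfrac{1}{|x-y|}$ for $(-\De)^{n/2}$ on $\R^n$, one has pointwise $(-\De)^{n/2}v=|x|^{n\al}e^{nu}=Q_0 e^{nv}$; thus $v$ plays the role of a regular ($\al=0$) normal solution with curvature $Q_0$. With $\La_R:=\int_{B_R}|x|^{n\al}e^{nu}\dx$, to get the first expression for $I_R$ I would write $(x\cdot\nabla v)\,e^{nv}=\tfrac{1}{n}x\cdot\nabla(e^{nv})$, apply the divergence theorem to the vector field $Q_0 x$ on $B_R$, and expand $\nabla\cdot(Q_0x)=nQ_0+x\cdot\nabla Q_0$ to obtain
\begin{equation*}
I_R=-\La_R-\frac{1}{n}\int_{B_R}(x\cdot\nabla Q_0)e^{nv}\dx+\frac{R}{n}\int_{\pa B_R}Q_0 e^{nv}\dS.
\end{equation*}
Lemma \ref{lem:bound_gradv} together with the identity $(x\cdot\nabla Q_0)\,e^{nv}=(n\al+n\,x\cdot\nabla(u-v))\,|x|^{n\al}e^{nu}$ show all integrands are in $L^1_{\rm loc}$, and since $\al>-1$ the interior boundary term from a truncation at $\pa B_\ep$ scales like $\ep^{n\al+n}\to 0$.

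For the second expression, differentiating \eqref{v_standard} under the integral yields
\[
x\cdot\nabla v(x)=-\frac{1}{\ga_n}\int_{\R^n}\frac{x\cdot(x-y)}{|x-y|^2}|y|^{n\al}e^{nu(y)}\dy,
\]
and the estimate $|x\cdot(x-y)|\le|x|\,|x-y|$ together with Lemma \ref{lem:bound_gradv} justifies Fubini in
\[
I_R=-\frac{1}{\ga_n}\iint_{B_R\times\R^n}\frac{x\cdot(x-y)}{|x-y|^2}|x|^{n\al}e^{nu(x)}|y|^{n\al}e^{nu(y)}\dx\dy.
\]
Splitting the inner variable as $\R^n=B_R\cup B_R^c$ and symmetrizing the diagonal block via
\[
\frac{x\cdot(x-y)}{|x-y|^2}+\frac{y\cdot(y-x)}{|x-y|^2}=1
\]
collapses the $B_R\times B_R$ contribution to $\tfrac{1}{2\ga_n}\La_R^2$, giving
\[
I_R=-\frac{\La_R^2}{2\ga_n}-\frac{1}{\ga_n}\iint_{B_R\times B_R^c}\frac{x\cdot(x-y)}{|x-y|^2}|x|^{n\al}e^{nu(x)}|y|^{n\al}e^{nu(y)}\dx\dy.
\]

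Equating the two expressions for $I_R$ and solving for the $\nabla Q_0$ integral produces an exact identity. To pass from this identity to \eqref{eq:pohozaev}, the key is the algebraic decomposition $\frac{x\cdot(x-y)}{|x-y|^2}=\frac{1}{2}+\frac{|x|^2-|y|^2}{2|x-y|^2}$: on $B_R\times B_R^c$ the second summand is pointwise $\leq 0$ since $|x|\leq R\leq |y|$, while the contribution of the constant $\tfrac12$ is exactly $\tfrac{1}{2\ga_n}\La_R(\La-\La_R)$. After combining and simplifying the $\La_R$ factors, one obtains
\[
\frac{1}{n}\int_{B_R}(x\cdot\nabla Q_0)e^{nv}\dx\leq\frac{\La_R(\La-2\ga_n)}{2\ga_n}+\frac{R}{n}\int_{\pa B_R}Q_0 e^{nv}\dS.
\]
Since $\int_{\R^n}Q_0 e^{nv}\dx=\La<\infty$, a standard Fubini argument selects $R_k\to\infty$ with $R_k\int_{\pa B_{R_k}}Q_0 e^{nv}\dS\to 0$; passing to the limsup and using $\La_{R_k}\to\La$ gives \eqref{eq:pohozaev}. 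The main technical obstacle is the cross integral over $B_R\times B_R^c$, where the kernel $\frac{x\cdot(x-y)}{|x-y|^2}$ has no definite sign; the algebraic splitting above is precisely what rescues the argument by isolating its non-positive part exactly under the geometric constraint $|x|\leq|y|$.
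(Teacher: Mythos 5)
Your proof is correct and follows the same overall strategy as the paper: compute $\int_{B_R}(x\cdot\nabla v)\,Q_0e^{nv}\,\dx$ once via the divergence theorem and once via the representation $\nabla v(x)=-\frac{1}{\gamma_n}\int\frac{x-y}{|x-y|^2}|y|^{n\al}e^{nu(y)}\,\dy$, symmetrize the double integral, and choose $R_k$ so that the outer boundary term vanishes. The one genuine difference is in the treatment of the cross term over $B_R\times B_R^c$. The paper splits this region into three pieces $J_1,J_2,J_3$, shows $J_1,J_2\to0$ by tail-integrability, and uses the sign of $|x|^2-|y|^2$ only on the $J_3$ region. You observe instead that the decomposition $\frac{x\cdot(x-y)}{|x-y|^2}=\frac12+\frac{|x|^2-|y|^2}{2|x-y|^2}$ makes the non-constant part pointwise nonpositive on the entire region $B_R\times B_R^c$ (since $|x|\le R\le|y|$), which yields the needed one-sided bound immediately for every fixed $R$, with no asymptotic argument required. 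This is a clean streamlining: the paper's three-region decomposition is not needed for the inequality version and appears to be carried over for parallelism with Proposition \ref{Pohozaev_equal}, where estimating $J_1,J_2,J_3$ separately really is necessary to obtain the sharper vanishing statement. Two small omissions in your write-up, both easily repaired: you do not dispatch the trivial case $\La=+\infty$ explicitly, and when you justify Fubini you should note that absolute convergence of the $B_R\times B_R^c$ block follows from $\frac{||x|^2-|y|^2|}{|x-y|^2}\le\frac{|x|+|y|}{|x-y|}$ together with $\La<\infty$, so the sign argument is applied to a genuinely convergent integral rather than a conditionally convergent one.
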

\begin{proof}
If $\La=+\infty,$ the statement follows. Assume that $\La<+ \infty.$ Since $v$ satisfies \eqref{v_standard}, we have  
    \begin{equation}\label{grad_v}
	\Gr v(x)=-\frac{1}{\ga_n}\int_{\R^n} \frac{(x-y)}{|x-y|^2} Q_0(y) e^{nv(y)}\dy, 
	\end{equation}
    where $Q_0$ is as given in the statement. Taking a dot product in \eqref{grad_v} with $ \left(Q_0(x)e^{nv(x)}\right)x$ and integrating the resultant on $B_R\setminus B_\ep$ ($\ep>0$), we get
    \begin{align}
    	I_{1,\ep}:= \int_{B_R\setminus B_\ep} (x \cdot\Gr v(x)) & Q_0(x) e^{nv(x)} \dx \\
    	& = -\frac{1}{\ga_n}\int_{B_R\setminus B_\ep}\int_{\R^n} \frac{x\cdot (x-y)}{|x-y|^2} Q_0(y) Q_0(x) e^{nv(y)} e^{nv(x)} \dy\dx\nonumber\\
    	&:=I_{2,\ep}.\label{eq:I_epsilon}
    \end{align}
    Using divergence theorem, we derive  
    \begin{equation}
    	I_{1,\ep}=-\int_{B_R\setminus B_\ep} Q_0(x) e^{nv(x)}\dx -\frac{1}{n}\int_{B_R\setminus B_\ep}(x\cdot \nabla Q_0(x))e^{nv(x)}\dx +\frac{R}{n}\int_{\pa (B_R \setminus B_\ep)} Q_0(x) e^{nv(x)}\dS(x).
    \end{equation}
    Applying Lemma \ref{lem:bound_gradv}, we deduce $I_{1,\ep}\rightarrow I_1$ as $\ep\rightarrow 0,$ where
    \begin{equation}\label{Div_I1}
    	I_1=-\int_{B_R} Q_0(x) e^{nv(x)}\dx -\frac{1}{n}\int_{B_R}(x\cdot \nabla Q_0(x))e^{nv(x)}\dx +\frac{R}{n}\int_{\pa B_R} Q_0(x) e^{nv(x)}\dS(x).
    \end{equation}
    Similarly, we can show that $I_{2,\ep}\rightarrow I_2$ as $\ep\rightarrow 0,$  where 
    \begin{equation}
    	I_2:=-\frac{1}{\ga_n}\int_{B_R}\int_{\R^n} \frac{x\cdot (x-y)}{|x-y|^2} Q_0(y) Q_0(x) e^{nv(y)} e^{nv(x)} \dy\dx.
    \end{equation}
    Thus, \eqref{eq:I_epsilon} implies
    \begin{align}
		I_1 & =I_2.\label{Inte}
	\end{align}
 Now,
 \begin{align}\label{Div_I2}
     I_2 &=-
     \frac{1}{2\ga_n}\int_{B_R}\int_{\R^n}Q_0(y)Q_0(x)e^{nv(y)}e^{nv(x)}\dy\dx\nonumber\\
     & \hspace{3cm} -\frac{1}{2\ga_n}\int_{B_R}\int_{\R^n}\frac{(x+y)\cdot (x-y)}{|x-y|^2}Q_0(y)Q_0(x)e^{nv(y)}e^{nv(x)}\dy\dx\nonumber\\
     &=-
     \frac{1}{2\ga_n}\int_{B_R}\int_{\R^n}Q_0(y)Q_0(x)e^{nv(y)}e^{nv(x)}\dy\dx\nonumber\\
     & \hspace{3cm} -\frac{1}{2\ga_n}\int_{B_R}\int_{B_R^c}\underbrace{\frac{(x+y)\cdot (x-y)}{|x-y|^2}Q_0(y)Q_0(x)e^{nv(y)}e^{nv(x)}}_{:=F(x,y)}\dy\dx,
 \end{align}
 where we use the anti-symmetry of the integrand to get the last equality. Now, we show that
 \begin{align}\label{Double_zero}
     \limsup_{R\ra \infty}\int_{B_R}\int_{B_R^c} F(x,y)\dy\dx & 
     \leq  0.
 \end{align}
To prove the assertion, we rewrite the double integral on $B_R\times B_R^c$ in the following way:
\begin{align}
\int_{B_R}\int_{B_R^c}F(x,y)\dy\dx & =\underbrace{\int_{B_{\frac{R}{2}}} \int_{B_R^c}F(x,y)\dy\dx}_{:=J_1}\nonumber\\
&+\underbrace{\int_{B_R\setminus B_{\frac{R}{2}}}\int_{B_{2R}^c} F(x,y)\dy\dx}_{:=J_2}+\underbrace{\int_{B_R\setminus B_{\frac{R}{2}}}\int_{B_{2R}\setminus B_R} F(x,y)\dy\dx}_{:=J_3}.\label{Double_decom}
\end{align}
Since $\La=\int_{\R^n} Q_0(z) e^{nv(z)} \dz<+\infty$ (by assumption), we observe that
\begin{align}
    |J_1|& \leq C \int_{B_{\frac{R}{2}}} \int_{B_R^c} Q_0(y)Q_0(x)e^{nv(y)}e^{nv(x)} \dy\dx\ra 0\;\text{as}\;R\ra\infty\label{J1_zero}\\
    \text{and}\;|J_2| & \leq C \int_{B_R\setminus B_{\frac{R}{2}}}\int_{B_{2R}^c} Q_0(y)Q_0(x)e^{nv(y)}e^{nv(x)} \dy\dx\ra 0\;\text{as}\;R\ra\infty.\label{J2_zero}
\end{align}
Furthermore, since $Q_0(z)e^{nv(z)}=|z|^{n\al}e^{nu(z)}$ has constant sign, we deduce
\begin{align}
    J_3 &= \int_{B_R\setminus B_{\frac{R}{2}}}\int_{B_{2R}\setminus B_R} \frac{(x+y)\cdot (x-y)}{|x-y|^2}Q_0(y)Q_0(x)e^{nv(y)}e^{nv(x)}\dy\dx \nonumber\\
    &=\int_{B_R\setminus B_{\frac{R}{2}}}\int_{B_{2R}\setminus B_R} \frac{|x|^2-|y|^2}{|x-y|^2}|y|^{n\al}|x|^{n\al}e^{nu(y)}e^{nu(x)}\dy \dx  \nonumber\\
    &\leq 0.\label{J3_negative}
\end{align}
Therefore, using \eqref{J1_zero}, \eqref{J2_zero} and \eqref{J3_negative} altogether in \eqref{Double_decom}, we get \eqref{Double_zero}.  Since $Q_0 e^{nv}\in L^1(\R^n)$,  we  can find a sequence   $R_k\ra \infty$  such that 
\begin{equation}\label{Boundary_zero}
    \lim_{k\ra \infty}  R_k\int_{\pa B_{R_k}} Q_0(x) e^{nv(x)}\dS(x)  =0.
\end{equation}
%If possible, let the claim does not hold. Then we can find sufficiently large $R_*>0$ and $\de>0$ such that  \begin{align}  r\int_{\pa B_{r}} Q_0(x) e^{nv(x)}\dS(x)> \de,\;\text{for all}\;r\geq R_*. \end{align} Now,  \begin{align}   \La=\int_{\R^n}Q_0(x) e^{nv(x)}\dx & \geq \int_{R_*}^R \int_{\pa B_{r}} Q_0(x) e^{nv(x)}\dS(x)\dr \nonumber\\     & \geq \de \int_{R_*}^{R}\frac{1}{r}\dr=\de(\log(R)-\log(R_*))\ra \infty \;\text{as}\;R\ra \infty, \end{align}  a contradiction to the assumption that $\La< +\infty.$ Thus \eqref{Boundary_zero} holds. 
 Therefore, taking $k\ra \infty$ in \eqref{Inte} along with \eqref{Double_zero} and \eqref{Boundary_zero} yields 
 \begin{align}
     \frac{1}{2\ga_n}\left(\int_{\R^n} Q_0(y) e^{nv(y)} \dy\right) \left(\int_{\R^n} Q_0(x) e^{nv(x)}\dx\right) & - \int_{\R^n} Q_0(x) e^{nv(x)}\dx \nonumber\\
     & \geq \frac{1}{n}\limsup_{k\ra \infty}\int_{B_{R_k}}(x\cdot \nabla Q_0(x))e^{nv(x)}\dx.
 \end{align}
Hence,
 \begin{equation}
     \frac{\La}{2\ga_n}(\La-2\ga_n)\geq \frac{1}{n}\limsup_{k\ra \infty}\int_{B_{R_k}}(x\cdot \nabla Q_0(x))e^{nv(x)}\dx.
 \end{equation}
 The proof is completed.
\end{proof}

\begin{proposition}\label{Pohozaev_equal}
	Let $u,v$ and $Q_0$ be as in Proposition \ref{Pohozaev}. Assume that $Q\geq 1$. Then there exist  $R_k\ra \infty$ such that 
	\begin{equation}\label{eq:pohozaev1}
		\frac{\La}{2\ga_n}(\La-2\ga_n)= \frac{1}{n}\limsup_{k\ra\infty}\int_{B_{R_k}}(x\cdot \nabla Q_0(x)) e^{nv(x)}\dx.
	\end{equation}
\end{proposition}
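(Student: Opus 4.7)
My plan is to revisit the computation in the proof of Proposition \ref{Pohozaev} and to upgrade the inequality $J_3\leq 0$ (used in \eqref{J3_negative}) to the identity $\lim_k J_3(R_k)=0$ under the additional hypothesis $Q\geq 1$. The identity $I_1=I_2$ from \eqref{Inte} holds for every $R>0$ without any sign assumption on $Q$, so the only loss in passing from this identity to the inequality \eqref{eq:pohozaev} was precisely $J_3\leq 0$. Combined with the fact that $J_3\leq 0$ continues to hold here, it is enough to show $\lim_k|J_3(R_k)|=0$.

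The hypothesis $Q\geq 1$ furnishes the extra decay that drives this upgrade. By Corollary \ref{lamda_al}, $\beta>1+\alpha$, so I fix $\epsilon>0$ with $\beta-\alpha-\epsilon>1$. By Lemma \ref{u_weakBound} (applicable since $Q\geq 0$), for $|x|$ sufficiently large,
\[
u(x)\leq(-\beta+\epsilon)\log|x|+I_\phi(x),\qquad I_\phi(x):=\frac{1}{\gamma_n}\int_{B_1(x)}\log\frac{1}{|x-y|}\phi(y)\dy,
\]
where $\phi(y)=|y|^{n\alpha}Q(y)e^{nu(y)}$. Set $m(x):=\int_{B_1(x)}\phi\dy$, which tends to $0$ as $|x|\to\infty$ since $\phi\in L^1(\R^n)$. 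Applying Jensen's inequality—in the spirit of \cite[Lemma 3.5]{Ali_Mancini}—to the convex function $\exp$ and the probability measure $\phi\dy/m(x)$ on $B_1(x)$ yields
\[
\exp\bigl(nI_\phi(x)\bigr)\leq\frac{1}{m(x)}\int_{B_1(x)}|x-y|^{-m(x)n/\gamma_n}\phi(y)\dy.
\]
Since the exponent $m(x)n/\gamma_n$ is arbitrarily small for large $|x|$, the kernel is uniformly integrable on $B_1(x)$. Plugging into $|x|^{n\alpha}e^{nu(x)}\leq|x|^{-n(\beta-\alpha-\epsilon)}\exp(nI_\phi(x))$, using Fubini to swap the order of integration, and exploiting the $L^1$-smallness of $\phi$ outside large balls, I aim to obtain a quantitative decay
\[
\int_{B_{2R}\setminus B_{R/2}}|x|^{n\alpha}e^{nu(x)}\dx=o(R^{-\sigma})
\]
for some $\sigma>0$, sharpening the trivial bound $\int_{A_R}|x|^{n\alpha}e^{nu}\leq\int_{A_R}\phi\to 0$ (which follows from $Q\geq 1$).

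Finally, to control $|J_3|$ I would use the pointwise kernel estimate
\[
\left|\frac{|y|^{2}-|x|^{2}}{|x-y|^{2}}\right|\leq\frac{|x|+|y|}{|x-y|}\leq\frac{4R}{|x-y|}
\]
on the integration domain, splitting the region into $|x-y|\geq 1$ and $|x-y|<1$. The first piece is bounded by $4R$ times the product of two annular integrals already shown to decay fast enough; the second is handled via the uniform local integrability $\int_{B_1(x)}|x-y|^{-1}\dy\leq C$ coupled with the same annular decay. Both contributions tend to $0$, which together with $J_3\leq 0$ gives $\lim_k J_3(R_k)=0$ and hence \eqref{eq:pohozaev1}.

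The principal difficulty is the Jensen step: the quantity $m(x)$ appears both as the exponent in the singular kernel and as a denominator in the Jensen bound, and the two competing effects must be balanced. I expect the pointwise lower bound $u(y)\geq-\beta\log|y|-C$ provided by Lemma \ref{u_UpLow}(ii) (applicable since $Q\geq 0$) to yield a polynomial lower bound on $m(x)$, and the strict inequality $\beta-\alpha-\epsilon>1$ from Corollary \ref{lamda_al} to dominate the resulting growth in $1/m(x)$. Once the annular decay is in hand, substituting into $|J_3|$ to close the argument should be straightforward.
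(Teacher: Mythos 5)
Your overall plan — keep $I_1 = I_2$, observe that $J_1, J_2 \to 0$ for free, and upgrade $J_3 \leq 0$ to $J_3 \to 0$ by extracting extra decay from $\beta > 1+\alpha$ (Corollary \ref{lamda_al}) via Lemma \ref{u_weakBound} and a Jensen step — is the same strategy the paper uses. But two of your implementation choices create gaps that the paper's proof is specifically engineered to avoid.

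First, the Jensen normalization. You normalize by the local mass $m(x) = \int_{B_1(x)}\phi\,\dy$, yielding $e^{nI_\phi(x)} \leq \frac{1}{m(x)}\int_{B_1(x)}|x-y|^{-nm(x)/\gamma_n}\phi(y)\,\dy$. The factor $1/m(x)$ grows without bound; your proposed cure via Lemma \ref{u_UpLow}$(ii)$ gives $m(x) \gtrsim |x|^{-n(\beta-\alpha)}$, hence $1/m(x) \lesssim |x|^{n(\beta-\alpha)}$, which \emph{exactly annihilates} the decay $|x|^{-n(\beta-\alpha-\epsilon)}$ from Lemma \ref{u_weakBound} and leaves you with a \emph{growing} factor $|x|^{n\epsilon}$. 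The quantitative annular decay $o(R^{-\sigma})$ you are after never materializes this way. The paper instead normalizes by the fixed constant $\|f\|_{L^1(B_{R/4}^c)}$ (treating $\frac{f\chi_{B_{R/4}^c}}{\|f\|_{L^1(B_{R/4}^c)}}\,\dz$ as the probability measure and $\delta\log\frac{1}{|x-z|}\chi_{\{|x-z|<1\}}$ as the integrand), so the $\|f\|_{L^1}$-prefactor cancels cleanly after Fubini and no pointwise lower bound on $m(x)$ is needed.

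Second, even granting the annular decay $\int_{B_{2R}\setminus B_{R/2}}|x|^{n\al}e^{nu}\,\dx \lesssim R^{-\sigma}$ with $\sigma = n(\kappa-1)$, $\kappa = \beta - \alpha - o(1)$, your plan of bounding the kernel by $4R$ on $\{|x-y|\geq 1\}$ needs $4R\cdot R^{-\sigma}\cdot o(1) \to 0$, i.e.\ $\sigma \geq 1$, or $4R\cdot R^{-2\sigma}\to 0$, i.e.\ $\sigma > 1/2$. But Corollary \ref{lamda_al} only gives $\beta - \alpha > 1$, so $\sigma = n(\beta-\alpha-1) - o(1)$ can be arbitrarily small; there is no reason for $\sigma > 1/2$. (Your $\{|x-y|<1\}$ piece has a similar issue: $\int_{B_1(x)}|x-y|^{-1}\,\dy \leq C$ does not directly control $\int_{B_1(x)}\frac{|y|^{n\alpha}e^{nu(y)}}{|x-y|}\,\dy$.) The paper avoids this loss by \emph{not} discarding the Riesz kernel: it keeps $\frac{1}{|x-y|}$ inside the Fubini and uses $\int_{B_R\setminus B_{R/2}}\frac{\dx}{|x-y|} \lesssim R^{n-1}$ (rather than $\lesssim R^n$), which saves exactly the factor of $R$ you are missing. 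The resulting estimate $\int_{B_R\setminus B_{R/2}}\frac{|x+y|}{|x-y|}|x|^{n\alpha}e^{nu(x)}\,\dx \lesssim R^{-n(\kappa-1)} \leq C$ requires only $\kappa > 1$, and then $|J_3| \lesssim \int_{B_{2R}\setminus B_R}|y|^{n\alpha}e^{nu(y)}\,\dy \to 0$ closes the argument with no quantitative rate on the outer annulus needed.
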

\begin{proof}
	By assumption, we have $Q\geq 1.$ Since $\La_*=\int_{\R^n} |x|^{n\al} Q (x)e^{nu(x)}\dx<+\infty,$ we get $$\La=\int_{\R^n} |x|^{n\al} e^{nu(x)}\dx\leq \La_*<+\infty.$$
	Moreover, using Corollary \ref{lamda_al}, we get $\ep>0$ such that
	\begin{equation}\label{lam:2alpha}
		(1+\al)+\ep<\be.
	\end{equation}
	To proceed further, we use similar methods as in Proposition \ref{Pohozaev}. Indeed, following the same notations, we claim that
	\begin{align}\label{double_zero}
		\limsup_{R\ra \infty}\int_{B_R}\int_{B_R^c} F(x,y)\dy\dx &=  0,
	\end{align}
	where $Q_0(z):=|z|^{n\al}e^{n(u-v)(z)}$ and $$F(x,y)=\frac{(x+y)\cdot (x-y)}{|x-y|^2}Q_0(y)Q_0(x)e^{nv(y)}e^{nv(x)}=\frac{(x+y)\cdot (x-y)}{|x-y|^2}|y|^{n\al}|x|^{n\al}e^{nu(y)}e^{nu(x)}.$$ 
	Observe that from \eqref{J1_zero} and \eqref{J2_zero}, we already have 
	\begin{equation}\label{J1J2_zero}
		J_1\ra 0\;\text{and}\;J_2\ra 0\quad \text{as}\;R\ra \infty.
	\end{equation}
	Now, we claim that 
	\begin{equation}\label{claimJ3}
		J_3=\int_{B_R\setminus B_{\frac{R}{2}}}\int_{B_{2R}\setminus B_R} F(x,y)\dy\dx\ra 0\quad\text{as}\;R\to \infty.
	\end{equation}
	Let $f(z):= |z|^{n\al} Q(z)e^{nu(z)}.$ Using Lemma \ref{u_weakBound}, we get large $R>0$ such that if $|x|\geq R,$
	\begin{align}
		u(x) & \leq \left(-\be+o(1)\right)\log(|x|)+ \frac{1}{\ga_n}\int_{B_1(x)}\log\left(\frac{1}{|x-z|}\right)f(z)\dz,\nonumber\\
		& =\left(-\be+o(1)\right)\log(|x|)+ \frac{||f||_{L^1\left(B_{R'}^c\right)}}{\ga_n}\int_{B_{R'}^c}\log\left(\frac{1}{|x-z|}\right) \frac{f(z)\chi_{\{|x-z|<1\}}}{||f||_{L^1\left(B_{R'}^c\right)}}\dz,\label{u_up}
	\end{align}
	where $R'=\frac{R}{4}.$  
	We can choose $R$ (sufficiently large) in such a way that  $\de:=\frac{n||f||_{L^1\left(B_{R'}^c\right)}}{\ga_n}<\frac12.$ Now, applying Jensen's inequality in \eqref{u_up} with respect to the measure $\d\mu(z):=\frac{f(z)}{||f||_{L^1\left(B_{R'}^c\right)}}\dz$, we obtain
	\begin{align}
		|x|^{n\al}e^{nu(x)} &\leq \frac{1}{|x|^{n\left(\be-\al\right)-no(1)}}\int_{B_{R'}^c} \left(1+\frac{1}{|x-z|^\de}\right)\d\mu(z)\nonumber\\
		&= \frac{1}{|x|^{n\kappa}}\int_{B_{R'}^c} \left(1+\frac{1}{|x-z|^\de}\right)\frac{f(z)}{||f||_{L^1(B_{R'}^c)}}\dz, 
	\end{align}
	where $\kappa=\be-\al-o(1).$ Thus for each $R<|y|<2R,$ we get 
	\begin{align}
		&\int_{B_R\setminus B_{\frac{R}{2}}}\frac{|x+y|}{|x-y|} |x|^{n\al}e^{nu(x)}\dx\nonumber\\
		& \leq \int_{B_R\setminus B_{\frac{R}{2}}}\frac{|x+y|}{|x-y|}\frac{1}{|x|^{n\kappa}}\int_{B_{R'}^c} \left(1+\frac{1}{|x-z|^\de}\right)\frac{f(z)}{||f||_{L^1\left(B_{R'}^c\right)}}\dz\dx\nonumber\\
		& \leq \frac{C}{R^{n\kappa-1}}\int_{B_R\setminus B_{\frac{R}{2}}}\frac{1}{|x-y|}\int_{B_{R'}^c} \left(1+\frac{1}{|x-z|^\de}\right)\frac{f(z)}{||f||_{L^1\left(B_{R'}^c\right)}}\dz\dx\nonumber\\
		&=\frac{C}{R^{n\kappa-1}}\frac{1}{||f||_{L^1\left(B_{R'}^c\right)}}\int_{B_{R'}^c} f(z)\left[\int_{B_R\setminus B_{\frac{R}{2}}}\frac{1}{|x-y|}\left(1+\frac{1}{|x-z|^\de}\right)\dx\right]\dz\nonumber\\
		&\leq \frac{CR^{n-1}}{R^{n\kappa-1}} \frac{1}{||f||_{L^1\left(B_{R'}^c\right)}}\int_{B_{R'}^c} f(z)\dz\nonumber\\
		&\leq\frac{C}{R^{n(\kappa-1)}}. \label{Bound}
	\end{align}
	Together with  \eqref{lam:2alpha} we  conclude that 	\begin{equation}
		\int_{B_R\setminus B_{\frac{R}{2}}}\frac{|x+y|}{|x-y|} |x|^{n\al}e^{nu(x)}\dx\leq C,\;\text{for each}\;R<|y|<2R.
	\end{equation}
	Now, using the fact that $\La<+\infty$ and the above estimate, we deduce
	\begin{align}
		|J_3| &\leq \int_{B_{2R}\setminus B_R}\int_{B_R\setminus B_{\frac{R}{2}}} |F(x,y)|\dx\dy\nonumber\\
		&=\int_{B_{2R}\setminus B_R}\int_{B_R\setminus B_{\frac{R}{2}}}\frac{|(x+y)\cdot (x-y)|}{|x-y|^2}|y|^{n\al}|x|^{n\al}e^{nu(y)}e^{nu(x)}\dx\dy\nonumber\\
		& \leq \int_{B_{2R}\setminus B_R} |y|^{n\al}e^{nu(y)}\left(\int_{B_R\setminus B_{\frac{R}{2}}}\frac{|x+y|}{|x-y|} |x|^{n\al}e^{nu(x)}\dx\right)\dy\nonumber\\
		&\leq C\int_{B_{2R}\setminus B_R} |y|^{n\al}e^{nu(y)}\dy\ra 0\;\quad\text{as}\;R\ra\infty.
	\end{align}
	Therefore, we proved our claim \eqref{claimJ3}, which essentially implies \eqref{double_zero}. Moreover, as in  \eqref{Boundary_zero}, we get a sequence   $R_k\ra \infty$ such that 
	\begin{equation}
		\lim_{k\ra \infty}  R_k\int_{\pa B_{R_k}} Q_0(x) e^{nv(x)}\dS(x)  =0.
	\end{equation}
	Hence, approaching along the same line as in the proof of Proposition \ref{Pohozaev}, we conclude 
	$$\frac{\La}{2\ga_n}(\La-2\ga_n)= \frac{1}{n}\lim_{k\ra\infty}\int_{B_{R_k}}(x\cdot \nabla Q_0(x)) e^{nv(x)}\dx.$$
	This finishes the proof.
\end{proof}

\section{Proofs of Theorems \ref{Bols_up} and \ref{Bols_low} }\label{Sec:proof_Bol}

In this section, we turn to the proof of Bol’s inequalities, beginning with Theorem \ref{Bols_up}.

\begin{proof}[Proof of Theorem \ref{Bols_up}]
   If $\La=+\infty,$ the statement is obvious. Thus we proceed with the assumption that $\La<+ \infty.$  Let $v$ be as defined in \eqref{v_standard}. 
 Define $h:=u-v.$ We claim that $\De h\geq 0$ in $\R^n.$ Indeed, if $n=2,$ then we have
\begin{equation}\label{De_h2}
    \De h (x)=(1-Q(x))|x|^{2\al} e^{2u(x)}.
\end{equation}
On the other hand, if $n\geq 3,$ we get 
\begin{equation}\label{De_h3}
    \De h (x)=\frac{n-2}{\ga_n}\int_{\R^n}\frac{(1-Q(y))}{|x-y|^2}|y|^{n\al}e^{nu(y)}\dy.
\end{equation}
In either case, it follows that  $\De h\geq 0$, as by our hypothesis $Q\leq 1.$
Let $R_k\to\infty$ be as in Proposition \ref{Pohozaev} so that 
\begin{equation}\label{Poho}
    \frac{\La}{2\ga_n}(\La-2\ga_n)\geq \frac{1}{n}\limsup_{k\ra \infty}\int_{B_{R_k}}(x\cdot \nabla Q_0(x)) e^{nv(x)}\dx,
\end{equation}
where $\La=\int_{\R^n} |x|^{n\al} e^{nu(x)}\dx$ and $Q_0(x):=|x|^{n\al}e^{nh(x)}.$  
Observe that
\begin{align}
    \int_{B_{R_k}}(x\cdot \nabla Q_0(x)) e^{nv(x)}\dx &= \int_{B_{R_k}}\left[x\cdot \nabla (|x|^{n\al}e^{nh(x)})\right] e^{nv(x)}\dx\nonumber\\
    &= n\al \int_{B_{R_k}}|x|^{n\al} e^{nu(x)}\dx + n\int_{B_{R_k}}(x\cdot \nabla h(x))|x|^{n\al} e^{nu(x)}\dx\label{grad_Q0}.
\end{align}
Moreover, $v$ is radial as $u$ is so and hence, $h$ is radial. Hence,  $\Delta h\geq0$ would imply $x\cdot\nabla h(x)\geq 0$,  which leads to %using the  co-area formula and  divergence theorem (similarly as in the proof of Proposition \ref{Pohozaev} along with Lemma \ref{lem:bound_gradv} and Corollary \ref{cor:grad_u_h}), we arrive at {\color{red} why $h$ is $C^1?$}
\begin{align}
    \int_{B_{R_k}}(x\cdot \nabla h(x))|x|^{n\al} e^{nu(x)}\dx\ge0. %&= \int_{0}^{R_k} \int_{\pa B_r} \left[ x\cdot \nabla h(|x|)\right]|x|^{n\al} e^{nu(x)}\dS(x) \dr \nonumber\\
 %   & = \int_{0}^{R_k}\int_{\pa B_r} |x| h'(|x|) |x|^{n\al}e^{nu(|x|)} \dS(x) \dr\nonumber\\
%    & = \int_{0}^{R_k} r^{n\al+1}e^{nu(r)} \int_{\pa B_r} \frac{\pa h}{\pa |x|}(|x|)\dS(x)\dr \nonumber\\
%    & = \int_{0}^{R_k} r^{n\al+1}e^{nu(r)} \int_{B_r} \De h(|x|)\dx\dr. 
\label{coarea}
\end{align}
%Since $\De h\geq 0,$ the above identity yields $\int_{B_{R_k}}(x\cdot \nabla h(x))|x|^{n\al} e^{nu(x)}\dx\geq 0,$ for every $R_k>0.$ 
Thus from \eqref{grad_Q0}, we get
\begin{align}\label{xQk}
	\int_{B_{R_k}}(x\cdot \nabla Q_0(x)) e^{nv(x)}\dx
	&\geq n\al \int_{B_{R_k}}|x|^{n\al} e^{nu(x)}\dx.
\end{align}
 Hence, using \eqref{xQk} in \eqref{Poho}, we obtain
 \begin{align}
 	\frac{\La}{2\ga_n}(\La-2\ga_n)\geq \limsup_{k\ra\infty}\left\{\al\int_{B_{R_k}}|x|^{n\al} e^{nu(x)}\dx \right\}=\al \La.
 \end{align}
Recalling that $\La_1=2\ga_n,$ we conclude 
\begin{equation}\label{Conclusion}
    \La\geq \La_1(1+\al),
\end{equation}
as desired. If $Q\equiv 1,$ then obviously we have the equality in \eqref{Conclusion} by \cite[Theorem 1.1]{Ali_Mancini}. Conversely, if equality happens in \eqref{Conclusion}, then we must have $\int_{B_R}(x\cdot \nabla h(x)) e^{nu(x)}\dx =0,$ which is possible only when 
\begin{align}
    \De h & \equiv 0\;\text{in}\;\R^n.
\end{align}
Therefore, using \eqref{De_h2} (for $n=2$) and \eqref{De_h3} (for $n\geq 3$), we conclude 
 that $Q\equiv 1\;\text{in}\;\R^n,$
and this ends the proof.
\end{proof}

We now establish Theorem \ref{Bols_low}, following an approach similar to that employed in the previous proof.
\begin{proof}[Proof of Theorem \ref{Bols_low}]
	 By hypothesis, we have $Q\geq 1.$ Moreover, $\La_*<+\infty.$ Therefore, 
	$$\La=\int_{\R^n}|x|^{n\al}e^{nu(x)}\dx\leq \int_{\R^n}|x|^{n\al}Q(x)e^{nu(x)}\dx=\La_*< +\infty.$$
	Let $h:=u-v,$ where $u$ is a normal solution of \eqref{Q-equation} and $v$ is given by \eqref{v_standard}. Now, for $x\in \R^n,$ 
	\begin{align*}
		\De h(x)= \begin{cases} 
			(1-Q(x))|x|^{2\al} e^{2u(x)}, & \text{if}\;n=2, \\
			\frac{n-2}{\ga_n}\int_{\R^n}\frac{(1-Q(y))}{|x-y|^2}|y|^{n\al}e^{nu(y)}\dy, & \text{if}\;n\geq 3.
		\end{cases}
	\end{align*}
	 Since $Q\geq 1,$ we get 
	 \begin{equation}\label{h_negative}
	 \De h\leq 0\;\text{in}\;\R^n.
	 \end{equation}
	  Now, using Proposition \ref{Pohozaev_equal},  we  get a
	sequence $R_k\ra \infty$ such that the below identity holds:
	\begin{equation}\label{eq:poho2}
		\frac{\La}{2\ga_n}(\La-2\ga_n)= \frac{1}{n}\limsup_{k\ra\infty}\int_{B_{R_k}}(x\cdot \nabla Q_0(x)) e^{nv(x)}\dx,
	\end{equation}
	where $Q_0(x):=|x|^{n\al}e^{nh(x)}.$ Next,
	proceeding in a similar manner as in \eqref{coarea} and using \eqref{h_negative}, we get
	\begin{align}
		\int_{B_{R_k}}(x\cdot \nabla Q_0(x)) e^{nv(x)}\dx
		&\leq n\al \int_{B_{R_k}}|x|^{n\al} e^{nu(x)}\dx.
	\end{align}
	Therefore, substituting the above in \eqref{eq:poho2} yields 
	$$\frac{\La}{2\ga_n}(\La-2\ga_n)\leq \limsup_{k\ra\infty} \left\{\al\int_{B_{R_k}}|x|^{n\al} e^{nu(x)}\dx\right\}=\al\La.$$
	Therefore, we must have 
	$$\La\leq \La_1(1+\al),$$ 
	as claimed in the statement. The equality case can be treated in the same manner as in the proof of Theorem \ref{Bols_up}, and the proof is therefore complete.
\end{proof}

\section{Proof of  Theorem \ref{existence_coro}}\label{Sec:existence}
 
We begin by noting that solutions to the singular equation  \eqref{Q1_equation} satisfy a Pohozaev identity, stated below, which is analogous to that derived in Proposition \ref{Pohozaev_equal}, see also \cite{Ali_Martinazzi}. The proof follows by adapting arguments similar to those used in Proposition \ref{Pohozaev_equal}; for brevity, we omit the detailed proof.

\begin{lemma}\label{Poho:singular}
	Let $u$ be a normal solution to \eqref{Q1_equation}. Then 
	\begin{align}\frac{\La_*}{2\ga_n}(\La_*-2\ga_n)= \al\int_{\R^n}|x|^{n\al} e^{nu(x)}\dx. \label{Poho_negative}\end{align}
\end{lemma}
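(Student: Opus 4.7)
The plan is to follow the scheme of Proposition \ref{Pohozaev_equal}, exploiting a simplification that is specific to \eqref{Q1_equation}. Since $u$ is a normal solution,
$$u(x)=\frac{1}{\gamma_n}\int_{\R^n}\log\frac{1+|y|}{|x-y|}(1+|y|^{n\alpha})e^{nu(y)}\dy+c,$$
so if I let $v(x)$ denote the right-hand integral (without the additive constant) and set $Q_0(x):=e^{nc}(1+|x|^{n\alpha})$, then $u-v\equiv c$ is constant, whence $Q_0(x)e^{nv(x)}=(1+|x|^{n\alpha})e^{nu(x)}$ and in particular $\int_{\R^n}Q_0\,e^{nv}\dx=\Lambda_*$. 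The crucial consequence is that the term $\nabla(e^{n(u-v)})$ which complicated the analysis in Proposition \ref{Pohozaev_equal} vanishes identically here.

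First I would differentiate $v$ under the integral sign as in \eqref{grad_v}, take dot product with $x\,Q_0(x)\,e^{nv(x)}$, and integrate on $B_R\setminus B_\ep$; letting $\ep\downarrow 0$ by an analogue of Lemma \ref{lem:bound_gradv} is legitimate since $\alpha>-1$ makes $|x|^{n\alpha}$ integrable at the origin. The divergence theorem, combined with the explicit identity $x\cdot\nabla Q_0(x)=n\alpha\,e^{nc}|x|^{n\alpha}$, transforms the left-hand side into
$$I_1=\frac{R}{n}\int_{\partial B_R}Q_0\,e^{nv}\dS-\int_{B_R}Q_0\,e^{nv}\dx-\alpha\int_{B_R}|x|^{n\alpha}e^{nu}\dx,$$
while antisymmetrizing in $(x,y)$ on the right-hand side gives
$$I_2=-\frac{\Lambda_*}{2\gamma_n}\int_{B_R}Q_0\,e^{nv}\dx-\frac{1}{2\gamma_n}\int_{B_R}\int_{B_R^c}F(x,y)\dy\dx,$$
with $F$ the same antisymmetric kernel as in Proposition \ref{Pohozaev_equal}.

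It remains to pass to the limit along a suitable sequence $R_k\to\infty$. The boundary integral can be made to vanish along a subsequence since $Q_0\,e^{nv}\in L^1(\R^n)$, and the pieces $J_1,J_2$ in the decomposition \eqref{Double_decom} of the cross integral vanish thanks to $\Lambda_*<\infty$. The main obstacle is $J_3$, whose kernel has no definite sign in the present setting. To handle it I would recast \eqref{Q1_equation} as an instance of \eqref{Q-equation} with $Q(x):=1+|x|^{-n\alpha}$; since $-n\alpha\in(0,n)$, this $Q$ lies in $L^\infty_{\rm loc}(\R^n)$ and satisfies $Q\geq 1$, so Corollary \ref{lamda_al} furnishes the strict inequality $\beta:=\Lambda_*/\gamma_n>1+\alpha$. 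This is precisely the ingredient that powers the Jensen-type decay estimate \eqref{u_up}--\eqref{Bound} in the proof of Proposition \ref{Pohozaev_equal}, yielding $|J_3|\leq C\int_{B_{2R}\setminus B_R}|y|^{n\alpha}e^{nu(y)}\dy\to 0$. Equating the limits of $I_1$ and $I_2$ then gives
$$-\Lambda_*-\alpha\int_{\R^n}|x|^{n\alpha}e^{nu}\dx=-\frac{\Lambda_*^2}{2\gamma_n},$$
which rearranges to \eqref{Poho_negative}.
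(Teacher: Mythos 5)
Your reformulation with the full kernel, so that $u-v\equiv c$ and $Q_0(x)=e^{nc}(1+|x|^{n\alpha})$, is exactly the adaptation of Proposition~\ref{Pohozaev_equal} that the paper has in mind, and the book-keeping ($x\cdot\nabla Q_0=n\alpha e^{nc}|x|^{n\alpha}$, the antisymmetrization giving $-\Lambda_*/(2\gamma_n)\int_{B_R}Q_0e^{nv}$, the vanishing of the boundary term along a subsequence) is all correct. The final algebra also checks out.

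There are, however, two inaccuracies in the $J_3$ step. First, the assertion that ``the kernel has no definite sign in the present setting'' is wrong: $F(x,y)=\tfrac{|x|^2-|y|^2}{|x-y|^2}(1+|x|^{n\alpha})(1+|y|^{n\alpha})e^{nu(x)+nu(y)}$ is still $\le 0$ on $(B_R\setminus B_{R/2})\times(B_{2R}\setminus B_R)$; the real issue is that a one-sided sign only gives an \emph{inequality} (as in Proposition~\ref{Pohozaev}), whereas the lemma asserts an \emph{identity}, so one genuinely needs $J_3\to 0$. Second, and more substantively, Corollary~\ref{lamda_al} delivers $\beta=\Lambda_*/\gamma_n>1+\alpha$, but that is \emph{not} the decay rate that powers the Jensen bound here. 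In Proposition~\ref{Pohozaev_equal} the inner $x$-integral carries the weight $|x|^{n\alpha}e^{nu(x)}$, which decays like $|x|^{-n(\beta-\alpha)}$, so $\beta>1+\alpha$ is precisely what makes the bound $R^{-n(\beta-\alpha-1)}$ uniform. In your setting the weight is $(1+|x|^{n\alpha})e^{nu(x)}$, which for $\alpha<0$ behaves like $e^{nu(x)}\sim|x|^{-n\beta}$ at infinity, so the bound is $R^{-n(\beta-1)}$ and you need $\beta>1$, a strictly stronger condition than $\beta>1+\alpha$ when $\alpha\in(-1,0)$. The proof is nonetheless salvageable: since $1+|x|^{n\alpha}\ge 1$, one has $e^{nu}\in L^1(\R^n)$, and combining this with the lower bound $u(x)\ge -\beta\log|x|-C$ from Lemma~\ref{u_UpLow}(ii) forces $\beta>1$ by the usual tail computation (as in the proof of Corollary~\ref{lamda_al}, but without the extra $|x|^{n\alpha}$ weight). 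With that substitution the argument closes.
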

%\begin{proof}	The proof follows by adapting arguments similar to those in Proposition \ref{Pohozaev_equal}. For brevity, we omit the detailed proof.\end{proof}

Now, we give a proof of the necessary condition of Theorem \ref{existence_coro}.

\begin{proof}[Proof of Theorem \ref{existence_coro}]  (Necessary condition)
	%By assumption, $u$ is a radial normal solution of \eqref{Q1_equation}. Now, Pohozaev identity gives\begin{equation}		\frac{\La_*}{2\ga_n}(\La_*-2\ga_n)= \al\int_{\R^n}|x|^{n\al} e^{nu(x)}\dx.	\end{equation}
	Since, $\al<0,$ \eqref{Poho_negative} implies $\La_*<2\ga_n=\La_1.$
	Moreover, using $$\al\int_{\R^n}|x|^{n\al} e^{nu(x)}\dx>\al \int_{\R^n}(1+|x|^{n\al}) e^{nu(x)}\dx=\al \La_*,$$ along with \eqref{Poho_negative}, we obtain $\La_*>\La_1(1+\al).$ Therefore, we have
\begin{equation}\label{low_1}
	\La_1(1+\al)<\La_*<\La_1.
\end{equation}
%Next, we claim that $\La_*>-\al\La_1.$ To prove the claim, 
We rewrite \eqref{Q1_equation} as 
\begin{align}
	\Dn u &= (1+|x|^{n\al}) e^{nu}%\nonumber\\
	 = Q |x|^{n\al}e^{nu},\;\quad % \text{where}\;
	 Q(x):=(1+|x|^{-n\al}).
\end{align}
%It is easy to observe that $Q$ is a smooth function in $\R^n\setminus\{0\}$. Moreover, $Q>1$ in $\R^n\setminus\{0\}$.
 Then $Q\geq1$, and therefore by Theorem \ref{Bols_low}  we get $\int_{\R^n}|x|^{n\al} e^{nu(x)}\dx<\La_1(1+\al).$ Hence, from \eqref{Poho_negative}, we deduce
\begin{align}
	\frac{\La_*}{2\ga_n}(\La_*-2\ga_n) &> \La_1 \al(1+\al),\nonumber\\
	\text{i.e.,}\; (\La_*+\La_1\al)(\La_* &-\La_1-\La_1\al) >0.
\end{align}
Since $\La_*-\La_1-\La_1\al>0$ in view of \eqref{low_1}, we must have $\La_*>-\La_1\al.$ Thus, combining with \eqref{low_1}, we conclude
$$\La_1\max\{-\al,1+\al\}<\La_*<\La_1,$$
as desired in the statement. 
\end{proof}

\medskip

We now show that $\La_1\max\{-\al,1+\al\}<\La_*<\La_1$ is also a sufficient condition for the existence of a radial normal solution to \eqref{Q1_equation}. To this end, we fix    a cut-off function $\vp$ given by   \begin{align*} \vp(t):=\left\{\begin{array}{ll}0\quad&\text{for }0\leq |t|< 1,\\  |t|-1\quad&\text{for }1\leq |t|<2,\\1\quad&\text{for }|t|\geq2,\end{array}  \right.   \end{align*} and for  $\ep>0,\delta>0$ small, we set   $$\vp_\delta(x):=\vp\left(\frac{ |x|}{\delta}\right), \quad \psi_\ep(x):=1-\vp(\ep |x|).$$
	\begin{proposition}\label{prop-51} Let $p\in (0,1)$ be fixed. For every $\ep>0,\delta>0$ and $\Lambda_*\in (0,\Lambda_1)$, there exists a radial normal solution to $$(-\Delta)^\frac n2 u=\left(1+e^{-npu(0)}|x|^{n\alpha}\vp_\delta\right)\psi_\ep e^{nu},\quad \Lambda_*=\int_{\R^n}\left(1+e^{-npu(0)}|x|^{n\alpha}\vp_\delta\right)\psi_\ep e^{nu} \dx.$$
\end{proposition}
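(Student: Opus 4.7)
The plan is to parametrize solutions by $\rho:=u(0)\in\R$: for each $\rho$ I will construct a radial normal solution $u_\rho$ to the approximate equation with $u_\rho(0)=\rho$, then show that the total mass
\[
\La_*(\rho):=\int_{\R^n}\bigl(1+e^{-np\rho}|x|^{n\al}\vp_\delta\bigr)\psi_\ep\, e^{nu_\rho}\dx
\]
depends continuously on $\rho$, tends to $0$ as $\rho\to-\infty$, and tends to $\La_1$ as $\rho\to+\infty$. The Proposition then follows from the intermediate value theorem applied to any prescribed $\La_*\in(0,\La_1)$.

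For the existence of $u_\rho$ with $u_\rho(0)=\rho$, I would seek a fixed point of the integral operator
\[
T_\rho[u](x):=\int_{\R^n}\bigl[G(x,y)-G(0,y)\bigr]K_\rho(y)e^{nu(y)}\dy+\rho,\qquad G(x,y):=\tfrac{1}{\ga_n}\log\tfrac{1+|y|}{|x-y|},
\]
with $K_\rho(y):=(1+e^{-np\rho}|y|^{n\al}\vp_\delta(y))\psi_\ep(y)$, so that $T_\rho[u](0)=\rho$ automatically. The weight $K_\rho$ is non-negative, bounded (using $|y|^{n\al}\leq\delta^{n\al}$ on $\{|y|\geq\delta\}$, since $\al<0$), and compactly supported in $B_{2/\ep}$, which supplies the compactness needed for a Schauder / Leray--Schauder argument on radial continuous functions. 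The a priori bound comes from controlling the total mass $\int K_\rho e^{nu}$ by a Brezis--Merle style iteration on concentric balls inside $B_{2/\ep}$.

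For $\rho\to-\infty$, an elementary bootstrap on the fixed-point identity gives $\|u_\rho-\rho\|_{L^\infty(B_{2/\ep})}\leq C$ once $\rho$ is sufficiently negative (the implicit inequality closes because $1-p>0$), whence $\La_*(\rho)\lesssim e^{n\rho}+e^{n(1-p)\rho}\to 0$. For $\rho\to+\infty$, I would rescale by $\mu_\rho:=e^{-\rho}$ and set $v_\rho(y):=u_\rho(\mu_\rho y)-\rho$, so that $v_\rho(0)=0$,
\[
(-\Delta)^{n/2} v_\rho(y)=K_\rho(\mu_\rho y)e^{nv_\rho(y)},\qquad \La_*(\rho)=\int_{\R^n}K_\rho(\mu_\rho y)e^{nv_\rho(y)}\dy.
\]
On any fixed compact set $\psi_\ep(\mu_\rho y)\to 1$ (since $\mu_\rho|y|\to 0$) and $e^{-np\rho}|\mu_\rho y|^{n\al}\vp_\delta(\mu_\rho y)\leq e^{-np\rho}\delta^{n\al}\to 0$, so $K_\rho(\mu_\rho y)\to 1$ locally uniformly. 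By standard Brezis--Merle / Struwe type compactness for radial normal solutions of $(-\Delta)^{n/2}v=Qe^{nv}$ with $Q\to 1$ and $v(0)=0$, the sequence $v_\rho$ converges locally to the standard radial Liouville profile $v_\infty$, whose total mass is $\La_1$; passing to the limit in the mass integral yields $\La_*(\rho)\to\La_1$. Continuity of $\rho\mapsto\La_*(\rho)$ is inherited from the continuous dependence of the fixed point on $\rho$ via the compactness of $T_\rho$.

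The main obstacle is the rigorous blow-up analysis for $\rho\to+\infty$: proving locally uniform convergence of $v_\rho$ to the standard Liouville profile and ruling out loss of mass at infinity when passing to the limit of $\int K_\rho(\mu_\rho y)e^{nv_\rho(y)}\dy$. This is exactly where the choice $p\in(0,1)$ and the outer cutoff $\psi_\ep$ are crucial: the factor $e^{-np\rho}$ kills the singular contribution in the rescaled weight, while $\psi_\ep$ keeps its support tight, so the rescaled equation is a uniformly controlled perturbation of $(-\Delta)^{n/2}v=e^{nv}$ near the origin on every fixed compact set.
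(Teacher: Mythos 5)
Your proposal takes a genuinely different route from the paper, and it has a gap the paper explicitly flags. You parametrize radial normal solutions by the value $\rho=u(0)$, solve the fixed-point problem for each $\rho$, and then try to sweep out $\La_*\in(0,\La_1)$ by the intermediate value theorem applied to $\rho\mapsto\La_*(\rho)$. The paper instead embeds the mass constraint directly into the fixed point: for each trial $v\in\X$ it chooses $c_v$ so that $\int(1+e^{-np[v(0)+c_v]}|x|^{n\al}\vp_\delta)\psi_\ep e^{n[v+c_v]}\dx=\La_*$ exactly (the map $t\mapsto G(t)$ is strictly increasing precisely because $p\in(0,1)$, with $G(-\infty)=0$, $G(+\infty)=\infty$), and then applies Schauder to $v\mapsto\bar v$ with $\La_*<\La_1$ supplying the compactness. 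The advantage of the paper's formulation is that it produces a solution with the prescribed mass in one shot, with no asymptotic analysis of $\La_*(\rho)$ needed.

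The critical flaw in your route is the step ``Continuity of $\rho\mapsto\La_*(\rho)$ is inherited from the continuous dependence of the fixed point on $\rho$ via the compactness of $T_\rho$.'' Compactness of $T_\rho$ (Schauder/Leray--Schauder) gives \emph{existence} of a fixed point for each $\rho$, not uniqueness. Without uniqueness, $\rho\mapsto u_\rho$ is not a well-defined function, $\La_*(\rho)$ is multivalued, and the intermediate value theorem does not apply. This is exactly the difficulty the paper singles out in the introduction (``the uniqueness of radial normal solutions to \eqref{Q1_equation} is not known''), and it is precisely what motivates the mass-prescribing choice of $c_v$ in their proof of Proposition \ref{prop-51}. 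To rescue your argument you would need either a uniqueness proof for the $\ep,\delta$-regularized equation, or a connectedness/degree-theoretic argument showing the solution set still sweeps all of $(0,\La_1)$ — neither of which you supply. In addition, the $\rho\to+\infty$ blow-up analysis (locally uniform convergence of $v_\rho$ to the standard bubble and ruling out mass escape through the boundary of the rescaled support of $\psi_\ep$) is an extra nontrivial burden that the paper's direct approach avoids entirely; you correctly identify it as the hard part but do not resolve it.
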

	\begin{proof} The proof will be based on a fixed point argument on the space $\X$  defined by \begin{equation}\label{X}
		\X :=\left\{v\in \mathcal{C}^0_{\rm rad}(\R^n): ||v||_{\X}:=\sup_{x\in \R^n}\frac{|v(x)|}{\log(|x|+2)}<\infty\right\}.
	\end{equation} For each $v\in \X$, we fix $c_v\in\R$ such that $$\int_{\R^n} (1+e^{-np[v(0)+c_v]}|x|^{n\alpha}\vp_\delta)\psi_\ep e^{n[v(x)+c_v]}\dx=\Lambda_*.$$  Existence of such constant $c_v$ follows from the fact that the function $$G(t):=\int_{\R^n} (1+e^{-np[v(0)+t]}|x|^{n\alpha}\vp_\delta)\psi_\ep e^{n[v(x)+t]}\dx,\quad t\in\R,$$ is monotone increasing as $p\in(0,1)$, and $$G(t)\xrightarrow{t\to\infty}\infty,\quad G(t)\xrightarrow{t\to-\infty}0.$$  Moreover, $v\mapsto c_v$ is continuous on $\X$. One can show that the map $v\to \bar v$ is compact, where $\bar v\in \X$ is a radial normal solution to $$(-\Delta)^\frac n2 \bar v=(1+e^{-np[v(0)+c_v]}|x|^{n\alpha}\vp_\delta)\psi_\ep e^{n[v(x)+c_v]}.$$ As $\Lambda_*<\Lambda_1$, one easily shows that  there exists a fixed point $v$, and consequently $u=v+c_v$ is a desired solution. 
	\end{proof}
	
	\begin{remark} For the radial normal solutions $u=u_{\ep,\delta}$, obtained in the above proposition, we have \begin{align}  \label{cond-normal}\int_{B_R}|\nabla^\ell u_{\ep,\delta}(x)|\dx\leq CR^{n-\ell}\quad\text{for every }R>0, \, 1\leq\ell\leq n-1,\end{align} where the constant $C $ is independent of $\ep>0, \delta>0$. 
	\end{remark}
	
	For each fixed $\delta>0,$ we now study the behavior of the family of solutions $\{u_\ep\}$ constructed in Proposition \ref{prop-51} as $\ep\to0$. In the following,  $p\in (0,1)$ and $\alpha\in (-1,0)$ are fixed with $p+\alpha>0$.  For each  $u_\ep$,  we have the following Pohozaev identity: 
	\begin{align}  \frac{\Lambda_*(\Lambda_*-\Lambda_1)}{\Lambda_1}= \alpha e^{-npu_\ep(0)}  \int_{\R^n} & \vp_\delta |x|^{n\alpha} \psi_\ep e^{nu_\ep} \dx 
		 + \frac{e^{-npu_\ep(0)}}{n}\int_{B_{2\de}\setminus B_\de} (x\cdot \nabla \phi_\de(x))|x|^{n\al}\psi_\ep e^{nu_\ep}\dx\\
		& +\frac{1}{n}\int_{B_{\frac{2}{\ep}}\setminus B_{\frac{1}{\ep}}} (x\cdot \nabla \psi_\ep(x)) (1+e^{-npu_\ep(0)}|x|^{n\al}\phi_\de)e^{nu_\ep}\dx.  \label{poh-2}
	\end{align}
	
	\begin{lemma} \label{lem-4.4}If $u_\ep(0)\to -\infty$, then $$ e^{-np u_\ep(0)} \int_{\R^n}|x\cdot\nabla\vp_\delta(x)||x|^{n\alpha}\psi_\ep e^{nu_\ep}\dx\to0.  %\int_{\R^n} \psi_\ep e^{nu_\ep}dx\not\to0,\quad
	$$ \end{lemma}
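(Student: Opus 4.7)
The plan is to reduce the assertion to the pointwise bound $u_\ep(x)\le u_\ep(0)$ on $\R^n$, after which the conclusion is immediate thanks to $p<1$.

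To establish the pointwise bound, I would differentiate the normal representation of $u_\ep$ (applying formula \eqref{grad_v} to $u_\ep$) to write
$$
\partial_r u_\ep(x) = -\frac{1}{\ga_n\, r}\int_{\R^n}\frac{x\cdot(x-y)}{|x-y|^2}\,f_\ep(y)\dy,\qquad r=|x|,
$$
where $f_\ep(y):=(1+e^{-npu_\ep(0)}|y|^{n\al}\vp_\delta(y))\psi_\ep(y)\,e^{nu_\ep(y)}\geq 0$ is radial. Using the radial symmetry of $f_\ep$, the angular integration over $|y|=s$ of the kernel simplifies to $r\,\partial_r M(r;s)$, where $M(r;s):=\fint_{|y|=s}\log|x-y|\dS(y)$. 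A short computation gives the explicit identity
$$
\partial_r M(r;s) = \frac{1}{2r} + \frac{r^2-s^2}{2r}\fint_{|y|=s}\frac{\dS(y)}{|x-y|^2},
$$
which is manifestly non-negative when $r\ge s$. For $r<s$, the sub-mean value property applied to the subharmonic function $x\mapsto\log|x-y|$ on $B_s(0)$ (which contains $0$ but not $y$) forces $r\mapsto M(r;s)$ to be non-decreasing, hence $\partial_r M\ge 0$ there as well. Integrating against $f_\ep\ge 0$ then yields $\partial_r u_\ep\le 0$, so $u_\ep$ is radially non-increasing and $u_\ep(x)\le u_\ep(0)$ everywhere.

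Once this monotonicity is available, the integrand in the lemma is supported on the annulus $B_{2\delta}\setminus B_\delta$, where $|x\cdot\nabla\vp_\delta|\le 2$, $|x|^{n\al}\le\delta^{n\al}$ (recall $\al\in(-1,0)$) and $\psi_\ep\le 1$; combined with the bound $e^{nu_\ep(x)}\le e^{nu_\ep(0)}$ this gives
$$
e^{-npu_\ep(0)}\int_{\R^n}|x\cdot\nabla\vp_\delta||x|^{n\al}\psi_\ep e^{nu_\ep}\dx \le C_\delta\, e^{-npu_\ep(0)}e^{nu_\ep(0)} = C_\delta\, e^{n(1-p)u_\ep(0)},
$$
which tends to $0$ as $u_\ep(0)\to-\infty$ since $p<1$.

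The main technical step is the radial monotonicity $u_\ep(x)\le u_\ep(0)$: the range $r\ge s$ follows at once from the explicit identity for $\partial_r M$, while the range $r<s$ relies on the classical sub-mean value property for the subharmonic log kernel. Once this is in place, the rest of the argument is a direct calculation.
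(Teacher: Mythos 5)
Your proposal is correct and follows essentially the same approach as the paper: once $u_\ep$ is known to be radially non-increasing, bound $e^{nu_\ep(x)}\le e^{nu_\ep(0)}$ on the annulus $B_{2\delta}\setminus B_\delta$ where $\nabla\vp_\delta$ is supported, and conclude via $p<1$. The only difference is that you rederive the monotonicity from the potential representation (a correct but longer route), whereas the paper simply invokes it as a known consequence of $\Delta u_\ep\le 0$ together with the divergence-theorem identity \eqref{rad-relation}.
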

	
	\begin{proof} Notice that  $\nabla \vp_\delta$ is  supported on $B_{2\delta}\setminus B_\delta$, and as $u_\ep$ is monotone decreasing and $p<1$, we see that $$ |x\cdot\nabla\vp_\delta(x)|  e^{nu_\ep(x)-npu_\ep(0)}\leq 2 e^{n(1-p)u_\ep(0)} \to0.$$ Hence, $$e^{-np u_\ep(0)} \int_{\R^n}|x\cdot\nabla\vp_\delta(x)|x|^{n\alpha}\psi_\ep e^{nu_\ep}\dx\leq 2e^{n(1-p)u_\ep(0)} \int_{B_{2\delta}\setminus B_\delta}|x|^{n\alpha}\dx\to0. $$ %To conclude the lemma, assume by contradiction that $$\int_{\R^n} \psi_\ep e^{nu_\ep}dx\not\to0.$$Then, from \eqref{poh-2} we obtain $$\frac{\Lambda_*(\Lambda_*-\Lambda_1)}{\Lambda_1}\leq \alpha (\Lambda^* +o(1)) +o(1),$$ which lead to $\Lambda_*\leq\Lambda_1(1+\alpha)$, a contradiction. 
	\end{proof}

	\begin{proposition} \label{propo-2}Let $\La_1\max\{-\al,1+\al\}<\La_*<\La_1$ be fixed. Then there exists $C>>1$ such that $u_\ep(0)\geq -C$ .
	\end{proposition}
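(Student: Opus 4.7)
I argue by contradiction, assuming that $\rho_k:=u_{\epsilon_k}(0)\to -\infty$ along a subsequence $\epsilon_k\to 0$. Starting from the Pohozaev identity \eqref{poh-2}, observe that for $\epsilon<1/(2\delta)$ the supports of $\nabla\varphi_\delta$ and $1-\psi_\epsilon$ are disjoint, so \eqref{poh-2} reduces to
\[
\frac{\Lambda_*(\Lambda_*-\Lambda_1)}{\Lambda_1} = \alpha N_\epsilon + T_2(\epsilon) + T_3(\epsilon),\qquad N_\epsilon := e^{-np\rho_\epsilon}\!\int\varphi_\delta|x|^{n\alpha}\psi_\epsilon e^{nu_\epsilon}\,dx,
\]
where $T_2,T_3$ are the inner and outer boundary terms from \eqref{poh-2}. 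Lemma~\ref{lem-4.4} yields $T_2(\epsilon)\to 0$ at once.

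The key technical step is the estimate $T_3(\epsilon)\to 0$. Since $\varphi_\delta\equiv 1$ on the annulus $B_{2/\epsilon}\setminus B_{1/\epsilon}$ for small $\epsilon$ and $|x\cdot\nabla\psi_\epsilon|\leq 2$ there, it suffices to bound $\int_{B_{2/\epsilon}\setminus B_{1/\epsilon}}(1+e^{-np\rho_\epsilon}|x|^{n\alpha})e^{nu_\epsilon}dx$. I would do this exactly as in Proposition~\ref{Pohozaev_equal}: Lemma~\ref{u_weakBound} combined with a Jensen-type inequality produces a pointwise decay of the form $|x|^{n\alpha}e^{nu_\epsilon(x)}\lesssim |x|^{-n(\beta-\alpha-o(1))}$ for $|x|\gg 1$, where $\beta:=\Lambda_*/\gamma_n>1+\alpha$ by the hypothesis $\Lambda_*>\Lambda_1\max\{-\alpha,1+\alpha\}$; the annulus then contributes at worst a term of order $\epsilon^{n(\beta-\alpha-1)}e^{-np\rho_\epsilon}$ which, in combination with the a priori lower bound on $\rho_\epsilon$ derived below, tends to zero.

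Once $T_2,T_3\to 0$, the Pohozaev identity forces
\[
N_\epsilon\to N_\infty := \frac{\Lambda_*(\Lambda_1-\Lambda_*)}{|\alpha|\Lambda_1}>0,\qquad M_\epsilon := \int\psi_\epsilon e^{nu_\epsilon}\,dx \to M_\infty := \frac{\Lambda_*\bigl(\Lambda_*-(1+\alpha)\Lambda_1\bigr)}{|\alpha|\Lambda_1}>0,
\]
the positivity of $M_\infty$ being precisely the assumption $\Lambda_*>(1+\alpha)\Lambda_1$. For the contradiction, rescale conformally via $\tilde u_\epsilon(y):=u_\epsilon(y/\epsilon)+\log(1/\epsilon)$, so $\tilde u_\epsilon(0)=s_\epsilon:=\rho_\epsilon-\log\epsilon$, and a direct change of variables gives
\[
N_\epsilon = k_\epsilon\!\int|y|^{n\alpha}\varphi(|y|/(\delta\epsilon))(1-\varphi(|y|))e^{n\tilde u_\epsilon}\,dy,\qquad k_\epsilon := e^{-np\rho_\epsilon}\epsilon^{-n\alpha} \xrightarrow{\epsilon\to 0} +\infty,
\]
the divergence using $p+\alpha>0$. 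From $M_\infty>0$ together with the radial monotonicity $u_\epsilon\leq\rho_\epsilon$ (which gives $M_\epsilon\leq Ce^{n\rho_\epsilon}\epsilon^{-n}$) we read off $s_\epsilon\geq -C$. If $s_\epsilon$ also stays bounded above, then \eqref{cond-normal} and the decay at infinity produce a radial limit $\tilde u_\infty$ with $\int|y|^{n\alpha}(1-\varphi(|y|))e^{n\tilde u_\infty}\,dy>0$, so that $N_\epsilon$ is bounded below by $k_\epsilon$ times a positive constant, forcing $N_\epsilon\to+\infty$ and contradicting $N_\epsilon\to N_\infty<\infty$. The case $s_\epsilon\to+\infty$ is reduced to the previous one by a secondary origin-centered blow-up rescaling $\hat u(z) = \tilde u_\epsilon(\sigma_\epsilon z)+\log\sigma_\epsilon$ with $\sigma_\epsilon = e^{-s_\epsilon}$.

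The principal obstacle is the estimate $T_3(\epsilon)\to 0$: the naive pointwise bound is not sharp enough near the edge of the admissible range for $\Lambda_*$, and one has to implement the full Jensen-type argument of Proposition~\ref{Pohozaev_equal} together with the a priori lower bound on $s_\epsilon$; the final rescaling/concentration step is then a relatively short blow-up argument.
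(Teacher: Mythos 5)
Your contradiction argument rests on the claim that the outer boundary term $T_3(\ep)$ vanishes. This is the step the paper carefully avoids, for good reason: it is not true in the regime you are working in. The paper only uses the \emph{sign} of $T_3$, namely $x\cdot\nabla\psi_\ep\leq 0$ gives $T_3\leq 0$. In the blow-up analysis (their Case 3), they prove $N_\ep\to\Lambda_1(1+\alpha)$, at which point the Pohozaev identity \eqref{poh-2} would force $T_3\to\frac{(\Lambda_*-\Lambda_1(1+\alpha))(\Lambda_*+\alpha\Lambda_1)}{\Lambda_1}>0$ under the hypothesis $\Lambda_*>\Lambda_1\max\{-\alpha,1+\alpha\}$, which is incompatible with $T_3\leq0$. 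So under the contradiction hypothesis the quantity $T_3$ \emph{does not} tend to $0$; the inconsistency is precisely where the contradiction lives. By postulating $T_3\to 0$ you derive $N_\ep\to N_\infty=\frac{\Lambda_*(\Lambda_1-\Lambda_*)}{|\alpha|\Lambda_1}$, a different and in general incompatible limit, and the argument drifts away from the actual state of affairs. You acknowledge the circularity yourself: your proof of $T_3\to0$ needs a lower bound on $s_\ep$, which you obtain from $M_\infty>0$, which you derived from $T_3\to0$.

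There are two further substantive gaps. First, $k_\ep:=e^{-np\rho_\ep}\ep^{-n\alpha}$ need not diverge. With $\alpha<0$, $\ep^{-n\alpha}\to 0$, so $k_\ep$ is a product of a diverging and a vanishing factor; writing $k_\ep=\exp\left(-n(p\rho_\ep+\alpha\log\ep)\right)$, the exponent has no definite sign without controlling the ratio $\rho_\ep/\log\ep$. The hypothesis $p+\alpha>0$ only helps when $\rho_\ep\sim\log\ep$. The paper sidesteps this by choosing a different rescaling rate $r_\ep=e^{-\frac{1-p}{1+\alpha}u_\ep(0)}$, adapted to the nonlinearity itself rather than to $\psi_\ep$, and then splitting into three cases according to whether $\ep r_\ep\to\infty$, to a finite positive limit, or to $0$. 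Your rescaling by $1/\ep$ collapses all these possibilities into one, and so misses the trichotomy which structures the whole proof. Second, the closing step ``if $s_\ep\to+\infty$, reduce to the bounded case by a secondary rescaling'' is just a pointer, not an argument.

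What one would want to keep from your attempt: the Pohozaev identity \eqref{poh-2} is the right tool, and $T_2\to0$ via Lemma \ref{lem-4.4} is used by the paper as well. But the missing ideas are (a) treat $T_3$ as a one-sided (nonpositive) quantity rather than trying to estimate it away, (b) rescale at the intrinsic rate $r_\ep$ and distinguish the three regimes for $\ep r_\ep$, and (c) in the surviving regime prove no-mass-loss, i.e.\ $N_\ep\to\Lambda_1(1+\alpha)$, by the decay estimate \eqref{vanish-1}, and only then feed that back into Pohozaev to obtain the sign contradiction.
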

	
	\begin{proof} Assume by contradiction that for some subsequence $\ep\to0$ (we still denote it with the same notation $\ep$) we have $u_\ep(0)\to-\infty$. Set $$\hat u_\ep(x):=u_\ep(r_\ep x)-u_\ep(0),\quad r_\ep:=e^{-\frac{1-p}{1+\alpha}u_\ep(0) }\to+\infty.$$ Then $\hat u_\ep$ is a radial normal solution to $$ (-\Delta)^\frac n2 \hat u_\ep=\hat f_\ep,\quad \hat f_\ep(x):=\left(  r_\ep^{q} +|x|^{n\alpha} \hat\vp_\delta \right)\hat \psi_\ep e^{n\hat u_\ep},\quad q:=-\frac{p+\alpha}{1-p}<0,$$ where $\hat \vp_\delta (x):=\vp_\delta(r_\ep x) $ and $\hat \psi_\ep(x):=\psi_\ep(r_\ep x)$. As $r_\ep^q\to0$, we get that  for $\ep>0$ small, $$\hat f_\ep(x)\leq 1+|x|^{n\alpha} \quad\text{in }\R^n,$$ and hence, by standard  elliptic   estimates,  up to a subsequence, \begin{align}\label{conv}\hat u_\ep\to \hat u\quad\text{in }C^0_{\rm loc}(\R^n). \end{align} Concerning the equation satisfied by $\hat u$, we consider the following three cases: 
	
	\noindent \textbf{Case 1:} $\ep r_\ep\to  \infty $.
	
We easily arrive at a contradiction, since  from the definition of $\hat \psi_\ep$,  one obtains  \begin{align}  \Lambda_*=\int_{\R^n}\hat f_\ep \dx\leq \int_{|x|\leq \frac{2}{\ep r_\ep}}(1+|x|^{n\alpha})\dx\to0. \end{align}

\noindent \textbf{Case 2:} $\ep r_\ep\to a\in (0,\infty) $. 

In this case, it follows immediately  that $\hat u$ is a radial normal solution to $$(-\Delta)^\frac n2 \hat u=|x|^{n\alpha}\psi_a e^{n\hat u}.$$ Moreover, as $\psi_\ep$ is supported in $B_{2/a+1}$ for $\ep>0$ small enough, from \eqref{conv} we get that $$\Lambda_*=\int_{\R^n}\hat f_\ep \dx\xrightarrow{\ep\to0} \int_{\R^n}|x|^{n\alpha}\psi_a e^{n\hat u}\dx.$$ Since $x\cdot\nabla\psi_a(x)\leq0$ in $\R^n$, the Pohozaev identity \eqref{poh-2} for $\hat u$ leads to $$\frac{\Lambda_*(\Lambda_*-\Lambda_1)}{\Lambda_1}< \alpha\Lambda_*\quad\Longrightarrow \Lambda_*<\Lambda_1(1+\alpha),$$  in contradiction with the assumption on $\Lambda_*$. 

\noindent \textbf{Case 3:} $\ep r_\ep\to0$. 
	
	In this case, the limit function $\hat u$ is a radial  solution to  \eqref{Q-equation} with $Q\equiv 1$. %{\color{red}need to be careful with the limit of the function $\hat \psi_\ep$}. 
	Moreover, as each $\hat u_\ep$ satisfies \eqref{cond-normal}, so does  $\hat u$, and therefore, it is a normal solution.  Consequently, \begin{align} \label{con-1} \int_{\R^n}|x|^{n\alpha} e^{n\hat u} \dx=\Lambda_1(1+\alpha).  \end{align}%  \begin{align}   (-\Delta )^\frac n2 \hat u=|x|^{n\alpha }e^{n\hat u}\end{align}
		Next, we claim that \begin{align} \label{vanish-1}\lim_{\ep\to 0}\int_{\R^n}  \hat \vp_\delta |x|^{n\alpha} \hat \psi_\ep e^{n\hat u_\ep}\dx=\Lambda_1(1+\alpha).\end{align} 
	We fix $\theta>0$ small such that $1+\alpha-4\theta>0$.  Let $R_1>>1$ be such that $$\int _{B_{R_1}}| x|^{n\alpha} e^{n\hat u} \dx\geq \Lambda_1(1+\alpha-\frac\theta2).$$ Then for $\ep>0$ small enough, we have $$\int _{B_{R_1}}| x|^{n\alpha} \hat\vp_\delta e^{n\hat u_\ep} \dx\geq \Lambda_1(1+\alpha-\frac 23\theta).$$   This leads to \begin{align}  \Delta\hat u_\ep (x) \leq  -\frac{n-2}{\gamma_n} \int_{B_{R_1}} \hat f_\ep(y)\frac{\dy}{|x-y|^2}\leq -2(n-2) (1+\alpha-\theta)\frac{1}{|x|^2}\quad\text{for }|x|\geq R_2>>R_1.\end{align} Therefore, for $R\geq R_3>>R_2$ 
	\begin{align} \int_{B_R} \Delta \hat u_\ep \dx\leq \int_{B_R\setminus B_{R_2}} \Delta\hat u_\ep \dx & \leq -2 (1+\alpha-\theta)|\S^{n-1}| (R^{n-2}-R_2^{n-2})\\
		& \leq -2 (1+\alpha-2\theta)|\S^{n-1}| R^{n-2}. \end{align} Finally, by \eqref{rad-relation} $$\hat u_\ep(x)\leq -2(1+\alpha-2\theta)\log |x|+C_0,\quad \text{for }|x|\geq R_3,$$ for some constant $C_0=C_0(R_3)$, which can be made independent of $\ep\to0$. Recalling  that $1+\alpha-4\theta>0$, for  $R_4>>R_3$, we obtain  \begin{align} \int_{|x|\geq R_4}\hat \vp_\delta |x|^{n\alpha} \hat \psi_\ep e^{n\hat u_\ep}\dx  \leq e^{nC_0}\int_{|x|\geq R_4} |x|^{-n(2+\alpha-4\theta)}\dx\leq CR_4^{-n(1+\alpha-4\theta)}\xrightarrow{R_4\to\infty}0 ,\end{align} which proves \eqref{vanish-1}. 
	
Together with Lemma \ref{lem-4.4},  \eqref{vanish-1} and  the fact that $x\cdot\nabla\psi_\ep(x)\leq0$,    we deduce from \eqref{poh-2} $$\frac{\Lambda_*(\Lambda_*-\Lambda_1)}{\Lambda_1}\leq \alpha [\Lambda_1(1+\alpha)+o(1)] +o(1),$$ which gives   $$(\Lambda_*+\alpha\Lambda_1)(\Lambda_*-(1+\alpha)\Lambda_1)\leq0,$$ a contradiction to the  assumption that $\Lambda_*>\Lambda_1\max\{-\alpha, 1+\alpha \}$. 
	\end{proof}
	
	\begin{remark} For a fixed $\Lambda_*\in (0,\Lambda_1)$, if $u_\ep$ is a radial normal solution as constructed in Proposition \ref{prop-51} without the term $|x|^{n\alpha}\vp_\delta$ (that is, $(-\Delta)^\frac n2 u_\ep=\psi_\ep e^{nu_\ep}$), then necessarily $u_\ep(0)\to-\infty$. In fact, with the same notations as in the proof of Proposition \ref{propo-2}, we must have   $\ep r_\ep\approx 1 $, and   $\hat u_\ep\to u$, where $u$ is a radial normal solution of  $(-\Delta)^\frac n2 u=\psi_a e^{nu}$ for some $a>0$.
	
	\end{remark}
	
	\begin{lemma} \label{lem-normal} For each $\delta>0$ fixed, up to a subsequence, $u_\ep\to u$ in $C^0_{\rm loc}(\R^n)$, where $u$ is a radially symmetric normal solution to  \begin{align}\label{eq-normal-2}(-\Delta)^\frac n2 u=(1+e^{-np u(0)}\vp_\delta|x|^{n\alpha})e^{nu},\end{align} satisfying \begin{align} \Lambda_*=\int_{\R^n} (1+e^{-np u(0)}\vp_\delta|x|^{n\alpha})e^{nu}\dx.\label{curv}\end{align} \end{lemma}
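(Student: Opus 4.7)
The plan is to combine Proposition \ref{propo-2} with a parallel upper bound on $u_\ep(0)$ obtained by a rescaling argument, and then use the integral representation together with uniform tail decay to extract a convergent subsequence.

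\textbf{Bounds on $u_\ep(0)$.} The lower bound $u_\ep(0)\geq -C$ is Proposition \ref{propo-2}. For the upper bound, I would suppose by contradiction that $u_\ep(0)\to+\infty$ along a subsequence, and set
$$s_\ep:=e^{-u_\ep(0)}\to 0,\qquad \hat u_\ep(y):=u_\ep(s_\ep y)-u_\ep(0).$$
A direct scaling computation yields
$$\Dn \hat u_\ep=\left(1+e^{-n(p+\al)u_\ep(0)}|y|^{n\al}\vp_\de(s_\ep y)\right)\psi_\ep(s_\ep y)\,e^{n\hat u_\ep},$$
with total mass $\La_*$. Since $p+\al>0$ the singular coefficient tends to zero; moreover $\vp_\de(s_\ep y)\to 0$ and $\psi_\ep(s_\ep y)\to 1$ pointwise. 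Arguing exactly as in Case~3 of Proposition \ref{propo-2}, $\hat u_\ep\to\hat u$ in $C^0_{\rm loc}(\R^n)$ with $\hat u$ a radial normal solution of $\Dn \hat u=e^{n\hat u}$. By the classification of such solutions (e.g.\ \cite[Theorem 1.1]{Ali_Mancini}), $\int_{\R^n}e^{n\hat u}\dy=\La_1$, and passing to the limit in the volume identity gives $\La_*=\La_1$, contradicting $\La_*<\La_1$. Thus, up to a subsequence, $u_\ep(0)\to u_0\in\R$.

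\textbf{Uniform tail and extraction of the limit.} Set $\be:=\La_*/\ga_n$, independent of $\ep$. By Lemma \ref{u_weakBound},
$$u_\ep(x)\leq(-\be+o(1))\log|x|+O(1)\quad\text{as }|x|\to\infty,$$
uniformly in $\ep$ once $u_\ep(0)$ is bounded. The assumption $\La_*>\La_1\max\{-\al,1+\al\}$ translates to $\be-\al>1+\al+\eta$ for some $\eta>0$, so $(1+|y|^{n\al}\vp_\de)e^{nu_\ep}$ admits a uniform integrable majorant outside a large ball. Combined with local equicontinuity of $u_\ep$, inherited from the integral representation
$$u_\ep(x)=\frac{1}{\ga_n}\int_{\R^n}\log\left(\frac{1+|y|}{|x-y|}\right)f_\ep(y)\dy+c_\ep,\qquad f_\ep:=\left(1+e^{-npu_\ep(0)}|y|^{n\al}\vp_\de\right)\psi_\ep e^{nu_\ep},$$
and the fixed total mass $\int_{\R^n}f_\ep\dy=\La_*$, Arzel\`a--Ascoli extracts a subsequence $u_\ep\to u$ in $C^0_{\rm loc}(\R^n)$ with $u$ radial. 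The constants $c_\ep$ remain bounded because $u_\ep(0)$ and the integral at $x=0$ are both controlled.

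\textbf{Passing to the limit and main obstacle.} Since $\psi_\ep\to 1$ pointwise and $0\leq\psi_\ep\leq 1$, the uniform tail bound enables dominated convergence, yielding $f_\ep\to(1+e^{-npu(0)}|y|^{n\al}\vp_\de)e^{nu}$ in $L^1(\R^n)$. Passing to the limit in the integral representation and in the volume identity then produces both \eqref{eq-normal-2} and \eqref{curv}. The main obstacle is exactly this no-loss-of-mass step: it is the strict lower bound $\La_*>\La_1\max\{-\al,1+\al\}$ that supplies the margin $\eta>0$ needed for the uniform tail control, while the upper bound on $u_\ep(0)$ (ruled out via rescaling and the constraint $\La_*<\La_1$) is what makes every estimate in the extraction uniform in $\ep$.
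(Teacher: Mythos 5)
Your approach is essentially the paper's: bound $u_\ep(0)$ from below via Proposition \ref{propo-2}, bound it from above by a blow-up argument at the origin, extract a $C^0_{\rm loc}$ limit, and then show no mass escapes to infinity. The upper bound you obtain by rescaling with $s_\ep=e^{-u_\ep(0)}$ is exactly what the paper phrases as ``a spherical bubble at the origin would force $\Lambda_*\geq\Lambda_1$''; the key mechanism in both cases is that $\vp_\delta\equiv 0$ on $B_\delta$, so after rescaling the singular term disappears on compact sets (your observation that $e^{-n(p+\alpha)u_\ep(0)}\to 0$ is true but redundant, since $\hat\vp_\delta\equiv 0$ there anyway). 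One small slip: the classification of $\hat u$ only gives $\Lambda_1\leq\Lambda_*$ by Fatou on balls, not the equality $\Lambda_*=\Lambda_1$; the inequality already contradicts $\Lambda_*<\Lambda_1$, so the conclusion stands.

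The one genuine gap is in the tail estimate. You invoke Lemma \ref{u_weakBound} and assert the resulting bound $u_\ep(x)\leq(-\beta+o(1))\log|x|+O(1)$ is ``uniform in $\ep$ once $u_\ep(0)$ is bounded.'' This does not follow: in Lemma \ref{u_weakBound}, the radius $R$ beyond which the estimate holds is determined by the decay of the tail of the measure $Q_\ep e^{nu_\ep}\dx$, which is exactly the quantity you are trying to control uniformly — this is circular. Boundedness of $u_\ep(0)$ alone does not give uniform tightness of the curvature densities. The paper escapes this by first passing to the $C^0_{\rm loc}$ limit $u$ (with finite curvature $\hat\Lambda$), then reading off $u(x)\geq-\beta\log|x|-C$ with $\beta=\hat\Lambda/\gamma_n$ from Lemma \ref{u_UpLow}, observing that integrability of $e^{nu}$ at infinity forces $\beta>1$ (i.e.\ $\hat\Lambda>\tfrac12\Lambda_1$, using only finiteness of $\hat\Lambda$, not the lower bound $\Lambda_*>\Lambda_1\max\{-\al,1+\al\}$), and then running the argument of \eqref{vanish-1}: pick $R_1$ so that $B_{R_1}$ captures nearly all of $\hat\Lambda$, transfer this to $u_\ep$ for small $\ep$ by $C^0_{\rm loc}$ convergence, integrate the Laplacian to get a logarithmic upper bound on $u_\ep$ for $|x|\geq R_3$ \emph{uniformly in $\ep$}, and conclude the tail vanishes. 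You should replace the unjustified ``uniformly in $\ep$'' step by this mass-in-a-ball argument.
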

	\begin{proof} Since $\vp_\delta\equiv0$ in $B_\delta$, and $\Lambda_*<\Lambda_1$, we must have $u_\ep(0)\leq C$. On the contrary,  there will be \emph{spherical bubble} at the origin, which is not possible as $\Lambda_*<\Lambda_1$. 
	
By Proposition \ref{propo-2} we conclude that $|u_\ep(0)|\leq C$, and hence, by standard elliptic estimates, up to a subsequence, $u_\ep\to u$ in $C^0_{\rm loc}(\R^n)$, where $u$ is a normal solution to \eqref{eq-normal-2} with $$\hat \Lambda :=\int_{\R^n} (1+e^{-np u(0)}\vp_\delta|x|^{n\alpha})e^{nu}\dx<\infty.$$ It remains to show that $\Lambda_*=\hat \Lambda$, which is equivalent to \begin{align}  \label{vanish-2} \lim_{R\to\infty} \lim_{\ep\to0}\int_{B_R^c} (1+e^{-np u_\ep(0)}\vp_\delta|x|^{n\alpha})\psi_\ep e^{nu_\ep}\dx=0.\end{align} In order to prove \eqref{vanish-2}, first, we notice that  by Lemma \ref{u_UpLow},
 $$u(x)\geq -\beta\log|x|-C\quad \text{for }|x|\geq R,\quad \beta:=\frac{\hat\Lambda}{\gamma_n}=2\frac{\hat\Lambda}{\Lambda_1},$$ and by the finiteness of $\hat\Lambda$, we necessarily have that $\beta>1$, that is $\hat\Lambda>\frac12\Lambda_1$. Then one can proceed as in the proof of \eqref{vanish-1} to conclude \eqref{vanish-2}. 
	\end{proof}
	
	\begin{proof}[Proof of Theorem \ref{existence_coro}] (Existence part) For each $\delta>0$ small, let $u=u_\delta$  (not to confuse with the previous notation $u_\ep$) be a normal solution to \eqref{eq-normal-2}-\eqref{curv} as given by Lemma \ref{lem-normal}. We now show that, up to a subsequence, $u_\delta\to u$ in $C^0_{\rm loc}(\R^n)$, where  $u$ is a radial normal solution to \eqref{Q1_equation} (up to a scaling) as desired. 
	
	\noindent\textbf{Step 1:} $u_\delta(0)\leq C$. 
	
	Assume by contradiction that for some subsequence (still denoted by the same notation) we have $u_\delta(0)\to+\infty$. Then setting $$v_\delta(x):=u_\delta(r_\delta x)-u_\delta(0),\quad r_\delta:=e^{-u_\delta(0)}\to0,$$ we see that $v_\delta$ is a radial normal solution to $$(-\Delta)^\frac n2 v_\delta=(1+ r_\delta ^{n(p+\alpha)}|x|^{n\alpha}\hat\vp_\delta )e^{n v_\delta},\quad \Lambda_*=\int_{\R^n}(1+ r_\delta ^{n(p+\alpha)}|x|^{n\alpha}\hat\vp_\delta )e^{n v_\delta}\dx,$$ where $\hat \vp_\delta(x):=\vp_\delta(r_\delta x)$. Since $p+\alpha>0$, up to a subsequence, $v_\delta\to v$ in $C^0_{\rm loc}(\R^n)$, where $v$ is a radial normal solution to $$ (-\Delta)^\frac n2 v=e^{nv}\quad\text{in }\R^n,\quad \int_{\R^n} e^{nv}\dx\leq\Lambda^*.$$  This contradicts to the fact that $$\Lambda_1=\int_{\R^n} e^{nv}\dx\leq\Lambda^*<\Lambda_1.$$
	
	\noindent\textbf{Step 2:} $u_\delta(0)\geq -C$. 
	
	Assume by contradiction that $u_\delta(0)\to-\infty$ as $\delta\to0$. This would lead to a contradiction as in the proof of Proposition \ref{propo-2}. %{\color{red}need to look at the limit $\hat\vp_\delta$}
	
	Thus, $|u_\delta(0)|\leq C$, and hence, up to a subsequence, $u_\delta\to u$ in $C^0_{\rm loc}(\R^n)$, where $u$ is a radial normal solution to  $$(-\Delta)^\frac n2 u=(1+e^{-np u(0)}|x|^{n\alpha})e^{nu}, \quad \hat \Lambda:=\int_{\R^n} (1+e^{-np u(0)} |x|^{n\alpha})e^{nu}\dx\leq\Lambda_*.$$ Then by Lemma \ref{u_UpLow}  and Corollary \ref{lamda_al} we necessarily have that $\hat\Lambda>\frac12\Lambda_1$, and   following the proof of \eqref{vanish-1}, one gets $\hat \Lambda=\Lambda_*$. Finally,  the function $$\tilde u(x)=u(\lambda x)+\log\lambda,\quad \lambda:=e^{\frac p\alpha u(0)},$$ is a desired radial normal solution to \eqref{Q1_equation}.
	\end{proof}

\section{Proof of Theorem  \ref{f_existence}}\label{Sec:existence_f}

%In order to prove Theorem \ref{f_existence}, we require a Pohozaev identity analogous to Proposition \ref{Pohozaev_equal}.
 Let $f\in C^0_{\text{rad}}(\R^n)\cap L^1(\R^n)$ be a given non-negative function. For  $\ep>0$,  let $u_\ep$ be a radial normal solution of 
\begin{equation}\label{u_ep_eq}
	\left\{\begin{aligned}
		\Dn u_\ep &= (1+f)|x|^{n\al} e^{nu_\ep-\ep|x|^2}\quad \text{in}\; \R^n,\\
		\int_{\R^n}(1+ &f(x))|x|^{n\al} e^{nu_\ep(x)-\ep|x|^2}\dx< \infty.
	\end{aligned}\right.
\end{equation}
We define 
\begin{equation}\label{v_eps}
	v_\ep(x)=\frac{1}{\ga_n}\int_{\R^n}\log \left(\frac{1+|y|}{|x-y|}\right)|y|^{n\al} e^{nu_\ep(y)-\ep|y|^2}\dy.
\end{equation}

\begin{proposition}\label{Poho_ep}
	For a given $\ep>0$, let $u_\ep$   and $v_\ep$ be as   above. Define
	\begin{equation}
		\tilde{\La}_\ep:=\int_{\R^n}|x|^{n\al}e^{nu_\ep(x)-\ep|x|^2}\dx, \quad Q_0(x):=|x|^{n\al}e^{nu_\ep(x)-nv_\ep(x)-\ep|x|^2}.
	\end{equation}
	Then there exist $R_k\ra \infty$ such that the following  identity holds:
	$$\frac{\tilde{\La}_\ep}{\La_1}(\tilde{\La}_\ep-\La_1)= \frac{1}{n} \lim_{k\ra\infty}\int_{B_{R_k}}(x\cdot \nabla Q_0(x)) e^{nv_\ep(x)}\dx.$$
\end{proposition}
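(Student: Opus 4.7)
The plan is to adapt the argument of Proposition \ref{Pohozaev_equal}, since the identity has the same structure with $|y|^{n\al}e^{nu(y)}$ replaced throughout by $|y|^{n\al}e^{nu_\ep(y)-\ep|y|^2}=Q_0(y)e^{nv_\ep(y)}$. The crucial new feature is the Gaussian weight $e^{-\ep|y|^2}$, which provides super-exponential decay at infinity and dramatically simplifies the delicate tail estimates that appeared in Proposition \ref{Pohozaev_equal}.

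Differentiating \eqref{v_eps} gives $\nabla v_\ep(x)=-\frac{1}{\ga_n}\int_{\R^n}\frac{x-y}{|x-y|^2}Q_0(y)e^{nv_\ep(y)}\dy$. Taking the dot product with $x\,Q_0(x)e^{nv_\ep(x)}$, integrating over $B_R\setminus B_\eta$, and letting $\eta\to 0$ (justified by the analogue of Lemma \ref{lem:bound_gradv} for $v_\ep$, whose proof carries over unchanged, together with the vanishing of the boundary contribution on $\pa B_\eta$ of order $\eta^{n(1+\al)}$), the divergence theorem produces on the LHS
\begin{equation*}
-\int_{B_R}Q_0 e^{nv_\ep}\dx-\frac{1}{n}\int_{B_R}(x\cdot\nabla Q_0)e^{nv_\ep}\dx+\frac{R}{n}\int_{\pa B_R}Q_0 e^{nv_\ep}\dS,
\end{equation*}
while anti-symmetry exactly as in \eqref{Div_I2} turns the RHS into $-\tilde{\La}_\ep^2/(2\ga_n)$ plus a cross term $-\frac{1}{2\ga_n}\int_{B_R}\int_{B_R^c}F_\ep(x,y)\dy\dx$ with $F_\ep$ built from the $\ep$-weighted source.

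It remains to control this cross term. Decomposing it as in \eqref{Double_decom} into pieces $J_1,J_2,J_3$, the first two vanish by dominated convergence from $\tilde{\La}_\ep<\infty$ (which holds since $f\geq 0$ and the source in \eqref{u_ep_eq} is integrable). The term $J_3$ is the main obstacle in Proposition \ref{Pohozaev_equal} and required a Jensen-type argument there; here, however, the Gaussian factor $e^{-\ep(|x|^2+|y|^2)}$ crushes everything on the annular region $R/2\leq|x|,|y|\leq 2R$. Indeed, for each fixed $\ep>0$ the normal solution $u_\ep$ is bounded above on $\R^n$ (it is continuous, and by Lemma \ref{u_weakBound} satisfies $u_\ep(x)\to-\infty$ as $|x|\to\infty$ since its total source mass is strictly positive), so combining $e^{-\ep|x|^2}\leq e^{-\ep R^2/4}$ with the elementary estimates $\frac{|(x+y)\cdot(x-y)|}{|x-y|^2}\leq\frac{3R}{|x-y|}$ and $\int_{|x|\leq 2R}|x-y|^{-1}\dx\leq CR^{n-1}$ immediately yields $|J_3|\leq C_\ep R^{N_\ep}e^{-\ep R^2/2}\to 0$. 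Finally, $\tilde{\La}_\ep<\infty$ and Fubini furnish a sequence $R_k\to\infty$ along which $R_k\int_{\pa B_{R_k}}Q_0 e^{nv_\ep}\dS\to 0$, whence the boundary term vanishes and collecting the contributions (using $\La_1=2\ga_n$) gives the stated identity.
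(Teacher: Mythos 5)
Your proposal is correct and follows the same Pohozaev framework as Propositions \ref{Pohozaev} and \ref{Pohozaev_equal} (to which the paper's one-line proof simply refers), but you take a genuinely different — and simpler — route for the key step, namely showing $J_3\to 0$. The paper's Proposition \ref{Pohozaev_equal} handles $J_3$ via Corollary \ref{lamda_al}, Lemma \ref{u_weakBound}, and a Jensen-inequality argument to extract the decay $|x|^{n\al}e^{nu}\lesssim |x|^{-n\kappa}$ with $\kappa>1$; ``similar to Proposition \ref{Pohozaev_equal}'' most plausibly means running that same machinery with the nonnegative source $(1+f)|y|^{n\al}e^{nu_\ep-\ep|y|^2}$. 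You instead observe that the hard-wired Gaussian cutoff $e^{-\ep|x|^2}$ makes all of that unnecessary: on the annulus $B_{2R}\setminus B_{R/2}$ the source is crushed by $e^{-\ep R^2/4}$, and the elementary bounds $\frac{|(x+y)\cdot(x-y)|}{|x-y|^2}\le\frac{|x|+|y|}{|x-y|}$ and $\int_{|x|\le 2R}|x-y|^{-1}\,\dx\le CR^{n-1}$ give a super-exponentially small $J_3$. This is a legitimate and cleaner argument for the $\ep$-regularized identity; what it loses (compared to literally mimicking Proposition \ref{Pohozaev_equal}) is only that it does not pass uniformly to $\ep\to0$, but the statement is for each fixed $\ep>0$, so that is irrelevant here.

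One small point to tighten: your justification that $u_\ep$ is bounded above on $\R^n$ (needed to bound $|x|^{n\al}e^{nu_\ep-\ep|x|^2}$ on the annulus and absorb the $|x-y|^{-1}$ singularity) leans on Lemma \ref{u_weakBound} and a claim that $u_\ep(x)\to-\infty$, but Lemma \ref{u_weakBound} by itself does not immediately give that (the potential term $\int_{B_1(x)}\log\frac{1}{|x-y|}\cdots$ must also be controlled). It is simpler and already used elsewhere in the paper (e.g.\ in Lemma \ref{lem-4.4} and Lemma \ref{lem:enu_bound}) to note that a radial normal solution with nonnegative source is radially decreasing — since $\Delta u_\ep\le 0$ away from the origin, $u_\ep'(r)\le 0$ — so $u_\ep\le u_\ep(0)$ everywhere. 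With that substitution the proof is complete and correct.
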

\begin{proof}
	The proof is similar to Proposition \ref{Pohozaev_equal}.
\end{proof}

As an immediate consequence of the above identity, we obtain: 
\begin{lemma}\label{Bols_ep}
	Suppose $f\in C^0_{\rm{rad}}(\R^n)\cap L^1(\R^n)$ is non-negative.
	Then we have 
	$$\tilde{\La}_\ep\leq \La_1(1+\al). $$ 
\end{lemma}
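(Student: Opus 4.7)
The plan is to mirror the proof of Theorem~\ref{Bols_low}, using the Pohozaev-type identity of Proposition~\ref{Poho_ep} together with a sign/monotonicity analysis of $h_\ep := u_\ep - v_\ep$ that accounts for the extra Gaussian cutoff. Set $Q_0(x) = |x|^{n\al} e^{n h_\ep(x) - \ep |x|^2}$ as in Proposition~\ref{Poho_ep}. A direct differentiation gives
\begin{equation}
(x \cdot \nabla Q_0(x))\, e^{n v_\ep(x)} = \bigl[ n\al + n\,(x\cdot \nabla h_\ep(x)) - 2\ep |x|^2 \bigr] |x|^{n\al} e^{nu_\ep(x) - \ep |x|^2},
\end{equation}
since $Q_0(x)\, e^{n v_\ep(x)} = |x|^{n\al} e^{n u_\ep(x) - \ep |x|^2}$.

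Next, I would establish that $x \cdot \nabla h_\ep(x) \leq 0$ in $\R^n$. Because $(-\Delta)^{n/2} v_\ep = |x|^{n\al} e^{nu_\ep - \ep |x|^2}$ and $(-\Delta)^{n/2} u_\ep = (1+f)|x|^{n\al} e^{nu_\ep - \ep|x|^2}$, the same computations as in the proof of Theorem~\ref{Bols_low} give
\begin{equation}
\Delta h_\ep(x) = \begin{cases} -\, f(x)\,|x|^{2\al} e^{2 u_\ep(x) - \ep |x|^2}, & n=2,\\[0.2em] \displaystyle -\,\frac{n-2}{\ga_n}\int_{\R^n} \frac{f(y)}{|x-y|^2}\,|y|^{n\al} e^{nu_\ep(y) - \ep |y|^2}\dy, & n\geq 3, \end{cases}
\end{equation}
so $f\geq 0$ forces $\Delta h_\ep \leq 0$. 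Since $u_\ep$ and $v_\ep$ are radial, so is $h_\ep$, and writing $h_\ep(x) = H_\ep(|x|)$, the divergence theorem yields $H_\ep'(r) = |\S^{n-1}|^{-1} r^{1-n}\int_{B_r}\Delta h_\ep\,\dx \leq 0$, hence $x\cdot \nabla h_\ep(x) = |x|\,H_\ep'(|x|) \leq 0$.

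Combining this bound with the sign of the $-2\ep |x|^2$ term (trivially $\leq 0$), I would integrate over $B_{R_k}$ and use Proposition~\ref{Poho_ep}:
\begin{equation}
\frac{\tilde\La_\ep}{\La_1}(\tilde\La_\ep - \La_1) = \frac{1}{n}\lim_{k\to\infty} \int_{B_{R_k}} (x\cdot\nabla Q_0)\,e^{n v_\ep}\,\dx \leq \al \lim_{k\to\infty}\int_{B_{R_k}} |x|^{n\al} e^{n u_\ep - \ep |x|^2}\,\dx = \al \tilde\La_\ep.
\end{equation}
Dividing by $\tilde\La_\ep>0$ yields $\tilde\La_\ep \leq \La_1(1+\al)$, as desired.

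There is no serious obstacle here: the Pohozaev identity is furnished by Proposition~\ref{Poho_ep}, and the only new input beyond Theorem~\ref{Bols_low} is that the Gaussian regularizer $e^{-\ep |x|^2}$ contributes a term of the favorable sign (helping the upper bound), while the assumption $f\geq 0$ plays exactly the role that $Q\geq 1$ played in Theorem~\ref{Bols_low}. The only point requiring a small amount of care is the justification of the radial-monotonicity step for $n\geq 3$ (where $\Delta h_\ep\leq 0$ is obtained via an integral representation rather than a pointwise identity), but this is identical to the argument already carried out in Theorem~\ref{Bols_low}.
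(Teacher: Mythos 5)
Your proof is correct and follows essentially the same route as the paper: set $h_\ep = u_\ep - v_\ep$, use the non-negativity of $f$ to get $\Delta h_\ep \leq 0$ and hence $x\cdot\nabla h_\ep \leq 0$ by radial symmetry, observe that the Gaussian factor contributes the favorably-signed term $-2\ep|x|^2$, and feed the resulting bound $x\cdot\nabla Q_0 \leq (n\alpha - 2\ep|x|^2)Q_0$ into the Pohozaev identity of Proposition~\ref{Poho_ep}. The paper's own argument is just a terser version of exactly this.
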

\begin{proof}
	Setting $h_\ep=u_\ep-v_\ep$,   we get $\De h_\ep\leq 0$ in $\R^n$ due to the non-negativity of $f.$ In particular, $$x\cdot\nabla h_\ep(x)\leq0\quad\text{in }\R^n.$$ Therefore, $$x\cdot\nabla Q_0(x)\leq n\alpha Q_0(x)-2\ep |x|^2 Q_0(x),$$ and hence, by Proposition  \ref{Poho_ep}, $$\tilde{\La}_\ep\leq \La_1(1+\al). $$
	% Now,  by Proposition \ref{Poho_ep}, we get $R_k\to \infty $ such that  	\begin{equation}\label{tildeLa}		\frac{\tilde{\La}_\ep}{\La_1}(\tilde{\La}_\ep-\La_1)= \frac{1}{n} \limsup_{k\ra\infty}\int_{B_{R_k}} (x\cdot \nabla Q_0(x)) e^{nv_\ep(x)}\dx,	\end{equation}	where $Q_0(x):=|x|^{n\al}e^{nh_\ep(x)-\ep|x|^2}.$ Since $h_\ep$ is radial, using divergence theorem, we obtain 	\begin{align}		 \int_{B_{R_k}}(x\cdot \nabla Q_0(x)) e^{nv_\ep(x)}\dx		&=n\al \int_{B_{R_k}}|x|^{n\al}e^{nu_\ep(x)-\ep|x|^2}\dx -2\ep \int_{B_{R_k}}|x|^{2} e^{nu_\ep(x)-\ep|x|^2}\dx \\		& \hspace{2cm}+ n\int_{B_{R_k}}(x\cdot \nabla h_\ep(x)) e^{nu_\ep(x)-\ep|x|^2}\dx\\		&\leq n\al \int_{B_{R_k}}|x|^{n\al}e^{nu_\ep(x)-\ep|x|^2}\dx,	\end{align}	where we use the fact $x\cdot \nabla h_\ep(x)\leq 0$ (since $\De h_\ep\leq 0$) in the last step. Thus substituting the above estimate in \eqref{tildeLa}, we conclude 	$$\tilde{\La}_\ep\leq \La_1(1+\al)\;\;\text{for each}\;\ep>0.$$
	\end{proof}

\begin{proof}[Proof of Theorem \ref{f_existence}]
	 Let $\rho\in\R$ be given. For $\ep>0,$ consider the following perturbed equation:
	\begin{equation}\label{Q-eq_pert2}
		\left\{\begin{aligned}
			\Dn u_\ep &= (1+f)|x|^{n\al} e^{nu_\ep-\ep|x|^2}\quad \text{in}\; \R^n,\\
			u_\ep(0) &=\rho.
		\end{aligned}\right.
	\end{equation}
	Making similar arguments as in \cite[Theorem 6.1]{Ali_Martinazzi}, we get a radial normal solution $u_\ep$ of \eqref{Q-eq_pert2} satisfying $u_\ep(0)=\rho$ for each $\ep>0$.   Recall that from Lemma \ref{Bols_ep} \begin{align}\label{uni4}\tilde{\La}_\ep=\int_{\R^n} |x|^{n\al} e^{nu_\ep-\ep|x|^2}\dx\leq\Lambda_1(1+\alpha).\end{align} Moreover, as the upper bound of $\tilde\Lambda_\ep$ is independent of $\ep>0$ and $\rho\in\R$,  following   the proof of Lemma \ref{lem:enu_bound} below, we obtain  \begin{align}\label{uni5}|x|^{n\alpha} e^{nu_\ep-\ep|x|^2}\leq C\quad\text{for }|x|\geq 1,\end{align} for some constant $C>0$ independent of $\ep>0$ and $\rho\in\R$.  This leads to %	Now, appyling Bol's inequality obtained in Lemma \ref{Bols_ep} and using the fact $f\in C^0_{\text{rad}}(\R^n)\cap L^1(\R^n)$, we get
	\begin{align}
		\La_\ep :=\int_{\R^n} (1+f(x))|x|^{n\al} e^{nu_\ep-\ep|x|^2}\dx \leq \tilde\Lambda_\ep (1+\|f\|_{L^\infty(B_1)})+C\|f\|_{L^1(\R^n)}\leq C.
	%	&= \tilde{\La}_\ep +\int_{\R^n} f(x)|x|^{n\al} e^{nu_\ep-\ep|x|^2}\dx\\	& \leq \La_1(1+\al)+ C = C\;\;\text{for all}\;\ep>0\label{Lamda_ep},
	\end{align}
	%where we use the fact $|x|^{n\al}e^{nu_\ep-\ep|x|^2}\leq C,$ for each $\ep>0,$ that follows by arguing similarly as in the proof of Lemma \ref{lem:falpha_0}. 	Therefore, it follows that $\{\La_{\ep}\}$ is bounded. 
It then follows that, up to a subsequence, $u_\ep\ra u_\rho$ in $\C^{0}_{\rm loc}(\R^n)$ as $\ep\ra 0$, where  $u_\rho$ is normal solution of \eqref{Q-equation} with $Q=1+f$ and $u_\rho(0)=\rho$.
 % Recall that $u_\ep(0)=\rho$ for each $\ep>0.$ Now, for $R>0,$ we have	\begin{equation}	\left|\Dn u_\ep(x)\right|= \left|(1+f(x))|x|^{n\al} e^{nu_\ep(x)-\ep|x|^2}\right|\leq C(R),\;\text{for each}\;x\in B_R.	\end{equation}	Since $u_\ep$ is a normal solution of \eqref{Q-eq_pert2}, it holds
%	\begin{equation}
%		u_\ep(x)=\frac{1}{\ga_n}\int_{\R^n}\log \left(\frac{|z|}{|x-z|}\right)(1+f(z))|z|^{n\al} e^{nu_\ep(z)-\ep|z|^2}\dz+\rho.
%	\end{equation}
%	Therefore, differentiating the above relation and using \eqref{Lamda_ep}, we deduce 
%	\begin{align}
%		\int_{B_R} |\De u_\ep(x)|\dx &  \leq \frac{n-2}{\ga_n} \int_{B_R}\int_{\R^n}\frac{1}{|x-z|^2}(1+f(z))|z|^{n\al} e^{nu_\ep(z)-\ep|z|^2}\dz\dx\\
%		& \leq \frac{n-2}{\ga_n} \int_{\R^n}(1+f(z))|z|^{n\al} e^{nu_\ep(z)-\ep|z|^2} \left(\int_{B_R} \frac{1}{|x-z|^2} \dx\right) \dz\\
%		&\leq C(R) \La_\ep\leq C(R).
%	\end{align}
%	Hence, applying the non-local elliptic estimate (cf. \cite[Proposition A.3]{Ali_Mancini}), it follows that $\{u_\ep\}$ is bounded in $\C^{n-1}_{\rm loc}(\R^n).$ Therefore, there exists $u_\rho\in \C^{n-1}_{\rm loc}(\R^n)$ such that up to a subsequence  $u_\ep\ra u_\rho$ in $\C^{n-1}_{\rm loc}(\R^n)$ as $\ep\ra 0$. Observe that $u_\rho$ is radial as $u_\ep$ is so. Furthermore, as a consequence of the pointwise convergence, we get $u_\rho(0)=\rho.$ By following the same proof as in \cite[Theorem 6.1]{Ali_Martinazzi} along with the help of \eqref{Lamda_ep}, we conclude that $u_\rho$ is a normal solution of \eqref{Q-equation}. 
We set 
	\begin{equation}\label{La_rho}
		\La(\rho):= \int_{\R^n} (1+f(x)) |x|^{n\al} e^{nu_\rho}\dx.
	\end{equation}

	\noi \textbf{Case 1:} $(\rho\rightarrow+\infty)$
	
	We set 
	\begin{equation}
		\eta_\rho(x):=u_\rho(r_\rho x)-u_\rho(0), \quad r_\rho:=e^{-\frac{1}{1+\alpha}u_\rho(0)}\xrightarrow{\rho\to\infty}0.
	\end{equation}
	Then,  as $f$ is continuous, up to a subsequence,  $\eta_\rho\rightarrow \eta$ in $\C^0_{\text{loc}}(\R^n)$, where   $\eta$ is a normal solution of 
	\begin{equation}
		\Dn \eta = (1+f(0))|x|^{n\al} e^{n\eta}\;\text{in}\;\R^n,\quad\int_{\R^n}(1+f(0))|x|^{n\al}e^{n\eta}\dx<\infty, 
	\end{equation} and in particular, we have $$\Lambda_1(1+\alpha)=\int_{\R^n}(1+f(0))|x|^{n\al}e^{n\eta}\dx=\lim_{R\to\infty}\lim_{\rho\to\infty}\int_{B_{Rr_\rho}}(1+f)|x|^{n\alpha}e^{nu_\rho}\dx.$$ Then necessarily \begin{align}\label{uni6}u_\rho\to-\infty\quad\text{in }C^0_{\text{loc}}(\R^n\setminus\{0\}).\end{align} On the contrary, up to a subsequence, we will have $u_\rho\to \hat u$ in $C^0_{\text{loc}}(\R^n\setminus\{0\})$, where $$\hat u(x)\geq 2(1+\alpha)\log\frac{1}{|x|}-C\quad\text{for }0<|x|\leq1,$$ a contradiction to $$\int_{B_1}|x|^{n\alpha }e^{n\hat u}\dx<\infty.$$ 
	
	Following the proof of \eqref{vanish-1}, we can show that $$\lim_{R\to\infty}\lim_{\rho\to\infty}\int_{B_{Rr_\rho}^c} |x|^{n\alpha}e^{nu_\rho}\dx=\lim_{R\to\infty}\lim_{\rho\to\infty}\int_{B_R^c} |x|^{n\alpha}e^{n\eta_\rho}\dx=0.$$ Together with \eqref{uni5}, \eqref{uni6} and the fact that $f\in C^0(\R^n)\cap L^1(\R^n)$, we conclude that $$\lim_{R\to\infty}\lim_{\rho\to\infty}\int_{B_{Rr_\rho}^c} f|x|^{n\alpha}e^{nu_\rho}\dx=0.$$ Thus $$\lim_{\rho\rightarrow+\infty} \La (\rho) =\La_1(1+\al).$$
	%Now,  Fatou's lemma yields	\begin{align}		\lim_{\rho\rightarrow+\infty} \La (\rho) =\lim_{\rho\rightarrow+\infty}\int_{\R^n}(1+f)|x|^{n\al}e^{nu_\rho}\dx & = \lim_{\rho\rightarrow+\infty}\int_{\R^n}(1+f)|x|^{n\al}e^{n\eta_\rho}\dx\\		 & \geq \lim_{R\ra 0}\lim_{\rho\ra+\infty}\int_{B_R}(1+f)|x|^{n\al}e^{n\eta_\rho}\dx \\		 &\geq \lim_{R\ra 0}\int_{B_R} (1+f(0)) |x|^{n\al}e^{n\eta}\dx \geq \La_1(1+\al).	\end{align}
	
%	  By Bol's inequality (Theorem \ref{Bols_low}), one has $\int_{\R^n} |x|^{n\al} e^{nu_\rho}\dx\leq \La_1(1+\al).$ Now, for $R>0,$	\begin{align}\label{split}		\int_{\R^n}f(x)|x|^{n\al} e^{nu_\rho}\dx=\int_{B_R}f(x)|x|^{n\al} e^{nu_\rho}\dx +\int_{B_R^c}f(x) |x|^{n\al} e^{nu_\rho}\dx.	\end{align} 	Since $\int_{\R^n}f(x)|x|^{n\al} e^{nu_\rho}\dx <+\infty,$ we get $\int_{B_R^c}f(x) |x|^{n\al}e^{nu_\rho}\dx\ra 0$ as $R\ra +\infty.$ Thus, \eqref{split} yields {\color{red}completely wrong arguments }		$\int_{\R^n}f(x)|x|^{n\al} e^{nu_\rho}\dx\ra 0\;\text{as}\;\rho\ra+\infty.$ Therefore, we obtain the following:		\begin{equation}\label{Larho_1}			\lim_{\rho\ra +\infty} \La(\rho)\leq \La_1(1+\al).		\end{equation}	On the other hand, observe that $u_\rho(0)\ra +\infty$ as $\rho\ra +\infty.$ For $\rho\in \R,$ let $r_\rho:=e^{-u_\rho(0)}.$ We have $r_\rho\ra 0$ as $\rho\ra +\infty.$ Define

%	The claim follows by combining the above estimate with \eqref{Larho_1}.
	
% Using Lemma \ref{Bols_ep} and the monotonicity of $u_\rho,$ we get 
%	\begin{align}
%		\La(\rho) &\leq \La_1+e^{n\rho}\int_{\R^n}f\dx\ra \La_1\;\text{as}\;\rho\ra -\infty.
%	\end{align}

\noi \textbf{Case 2:} $(\rho\rightarrow-\infty)$

It follows from \eqref{uni5} that \begin{align}  \lim_{\rho\to-\infty}\int_{\R^n}f|x|^{n\alpha} e^{nu_\rho}\dx=0.\label{uni7}  \end{align}
 Define
\begin{align}
	v_\rho(x) :=\frac{1}{\ga_n}\int_{\R^n}\log \left(\frac{1+|y|}{|x-y|}\right) |x|^{n\al}e^{nu_\rho(y)}\dy,\quad h_\rho :=u_\rho-v_\rho.
\end{align}
Then as in Proposition \ref{Pohozaev_equal}, we have 
	\begin{align}\label{P_hrho}
		\frac{\Tilde{\La}(\rho)}{\La_1}(& \Tilde{\La}(\rho) -\La_1) =\frac{1}{n}\limsup_{k\rightarrow+\infty} \int_{B_{R_k}}(x\cdot \nabla Q_0(x))e^{nv_\rho(x)}\dx ,\end{align} for some $R_k\to\infty$, where 
 $$\Tilde{\La}(\rho):=\int_{\R^n}|x|^{n\al}e^{nu_\rho(x)}\dx,\quad  Q_0(x):=|x|^{n\al}e^{nh_\rho(x)}.$$ 
 From the definition of $h_\rho$,  one gets
	\begin{align*}
		\De h_\rho(x)= \begin{cases} 
			-f(x)|x|^{2\al}e^{2u_\rho(x)}, & \text{if}\;n=2, \\
			-\frac{n-2}{\ga_n}\int_{\R^n}\frac{f(y)}{|x-y|^2}|y|^{n\al}e^{nu_\rho(y)}\dy, & \text{if}\;n\geq 3.
		\end{cases}
	\end{align*}
Therefore,  for $n=2$, 
	\begin{align}
		0\geq \int_{B_r}\De h_\rho(x)\dx=  -\int_{B_r} f(x)|x|^{2\al}e^{2u_\rho}\dx,  
	\end{align}  
	and for $n\geq 3,$ 
	\begin{align}
		0\geq\int_{B_r}\De h_\rho(x)\dx & = -\frac{n-2}{\ga_n} \int_{B_r} \int_{\R^n} \frac{f(y)}{|x-y|^2}|y|^{n\al}e^{nu_{\rho}(y)}\dy \dx\\
		&=- C \int_{\R^n} f(y) |y|^{n\al}e^{n u_\rho(y)} \left(\int_{B_r} \frac{1}{|x-y|^2} \dx \right)\dy\\
		&\geq - C r^{n-2} \int_{\R^n} f(y)|y|^{n\al} e^{n u_\rho(y)} \dy.
	\end{align} 
	Moreover, using integration by parts, we get   $$\int_{B_R} x\cdot\nabla h(x) |x|^{n\alpha} e^{nu_\rho}\dx=\int_{0}^{R} r^{n\al+1}e^{nu_\rho(r)} \int_{B_r} \De h_\rho(|x|)\dx\dr.$$ Therefore, by \eqref{uni4} and \eqref{uni7},   
	\begin{align}
		0 & \geq \limsup_{k\rightarrow+\infty}\int_{0}^{R_k} r^{n\al+1}e^{nu_\rho(r)} \int_{B_r} \De h_\rho(|x|)\dx\dr \\
		& \geq  -C \limsup_{k\rightarrow+\infty} \left\{\int_{0}^{R_k} r^{n\al+n-1} e^{n u_\rho(r)} \left(\int_{\R^n} f(y)|y|^{n\al} e^{n u_\rho(y)} \dy\right)\dr\right\}\\
		%& = -C \left(\int_{\R^n} f(y)|y|^{n\al} e^{n u_\rho(y)} \dy\right) \left(\int_{0}^{+\infty} r^{n-1} \left(r^{n\al}e^{n u_\rho(r)}\right)\dr\right)\\
		&=-C \left(\int_{\R^n} f(y)|y|^{n\al} e^{n u_\rho(y)} \dy\right) \left(\int_{\R^n}|x|^{n\al}e^{n u_\rho(x)}\dx\right) \\ &\xrightarrow{\rho\to-\infty} 0.
	\end{align}  Using this in  \eqref{P_hrho}, we deduce
 	\begin{equation}
		\Tilde{\La}(\rho)\rightarrow \La_1(1+\al)\;\quad \text{as}\;\rho\rightarrow-\infty,
	\end{equation}
	and again by \eqref{uni7},
	\begin{align}
		\La(\rho)=\int_{\R^n} (1+f(x))|x|^{n\al}e^{nu_\rho}\dx=\Tilde{\La}(\rho)+\int_{\R^n}f|x|^{n\al}e^{nu_\rho}\dx \rightarrow \La_1(1+\al)\;\text{as}\;\rho\rightarrow-\infty.
	\end{align} 
The proof is concluded. 
	\end{proof}
 
	\section{Proof of  Theorem  \ref{total-curvature}}\label{Sec:bound_proof}

	We now establish Theorem \ref{total-curvature}  with the help of Theorem \ref{Bols_low}.  To this end, we first require the following auxiliary lemma:

\begin{lemma}\label{lem:enu_bound}
	Let $u$ be a radial normal solution of \eqref{Q-equation} with $Q$ be as in Theorem \ref{total-curvature}. Then for $\al>0,$ there exists $C>0$ (not depending on $u$) such that
	$$|x|^{n\al}e^{nu(x)}\leq C\;\text{in} \;B_1^c.$$
\end{lemma}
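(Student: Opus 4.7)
The plan is to combine the self-improving upper bound from Lemma \ref{u_weakBound} with a Jensen-type estimate, in the spirit of the proof of Proposition \ref{Pohozaev_equal}, exploiting the uniform integral bounds produced by Theorem \ref{Bols_low}.

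I would first record two uniform integral bounds. Since $Q \geq 1$, Theorem \ref{Bols_low} gives $\int_{\R^n} |y|^{n\al}e^{nu(y)}\,\dy \leq \La_1(1+\al)$. Rewriting the equation as $\Dn u = |y|^{n(\al+p/n)}\tilde Q\,e^{nu}$ with $\tilde Q(y):=|y|^{-p}Q(y)$, the hypothesis $Q \geq 1+|y|^p$ yields $\tilde Q(y) \geq |y|^{-p}+1 \geq 1$ for all $y$, so a second application of Theorem \ref{Bols_low} gives $\int_{\R^n}|y|^{n\al+p}e^{nu(y)}\,\dy \leq \La_1(1+\al+\tfrac{p}{n})$. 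Both bounds are uniform in $u$. Moreover, Corollary \ref{lamda_al} provides $\be := \tfrac{1}{\ga_n}\int |y|^{n\al}Q(y)e^{nu(y)}\,\dy > 1+\al$.

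Next, fix $\epsilon\in(0,1)$ small. By Lemma \ref{u_weakBound}, for $|x|$ sufficiently large,
\[
u(x) \leq (-\be+\epsilon)\log|x| + \frac{1}{\ga_n}\int_{B_1(x)}\log\frac{1}{|x-y|}\,f(y)\,\dy, \qquad f(y):=|y|^{n\al}Q(y)e^{nu(y)}.
\]
Setting $\de := n\|f\|_{L^1(B_1(x))}/\ga_n$ and applying Jensen's inequality with the probability measure $\d\mu=f\chi_{B_1(x)}/\|f\|_{L^1(B_1(x))}$ (exactly as in the proof of Proposition \ref{Pohozaev_equal}), one derives
\[
|x|^{n\al}e^{nu(x)} \leq |x|^{-n(\be-\al-\epsilon)}\left(1 + \frac{1}{\|f\|_{L^1(B_1(x))}}\int_{B_1(x)}\frac{f(y)}{|x-y|^\de}\,\dy\right).
\]
The prefactor $|x|^{-n(\be-\al-\epsilon)}$ is bounded by $1$ for $|x|\geq 1$ since $\be-\al-\epsilon>1-\epsilon>0$ by Corollary \ref{lamda_al}.

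The main obstacle is to bound the parenthetical expression uniformly in $u$. Using the pointwise splitting $Q \leq M|y|^p + (Q-M|y|^p)^+$, the mass $\|f\|_{L^1(B_1(x))}$ decomposes into a piece bounded by $M\int |y|^{n\al+p}e^{nu}\,\dy \leq M\La_1(1+\al+p/n)$ (uniform in $u$) and a remainder piece governed by $(Q-M|y|^p)^+$; the $L^1$-integrability of this remainder over $B_1^c$ forces the remainder piece to be small on $B_1(x)$ for $|x|$ large, ensuring $\de$ stays uniformly below $n/2$ so that $1/|x-y|^\de$ is uniformly integrable on $B_1(x)$, and the same splitting controls $\int f(y)/|x-y|^\de\,\dy$. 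For the compact range $1 \leq |x| \leq R_0$, a gradient estimate in the spirit of Lemma \ref{lem:bound_gradv} — extended to $|x|\geq 1$ and using $\|Q\|_{L^\infty(B_{2R_0})}<\infty$ — delivers the pointwise bound directly. The most delicate step is the coupling between the numerator and the denominator in the Jensen estimate, which I would handle via a bootstrap: first establishing the bound in the outer regime where $(Q-M|y|^p)^+$ is $L^1$-small, then propagating inward using the local $L^\infty$ control on $Q$.
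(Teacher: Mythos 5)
Your strategy has a genuine uniformity gap that the lemma's statement forbids. Lemma \ref{u_weakBound} provides the estimate only for $|x|\geq R$ where $R$ depends on $u$ (it is chosen so that the tail of $f:=|y|^{n\al}Qe^{nu}$ over $B_{R/4}^c$ is small), and likewise the smallness of $\de:=n\|f\|_{L^1}/\ga_n$ needed for the Jensen step is a statement about the tail of $f$, not about its total mass. Your attempt to restore uniformity by splitting $Q\leq M|y|^p+(Q-M|y|^p)^+$ does not work: the piece $M\int_{B_1(x)}|y|^{n\al+p}e^{nu}\,\dy$ is uniformly \emph{bounded} by $M\La_1(1+\al+p/n)$, but it is not uniformly \emph{small}, so $\de$ may exceed any desired threshold; only the tail of the $(Q-M|y|^p)^+$ piece is small for $|x|$ large, and even that requires knowing something about $e^{nu}$ in the overlap region — which is exactly what you are trying to prove. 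The ``compact range $1\leq|x|\leq R_0$'' fallback is circular for the same reason ($R_0$ depends on $u$), and a gradient estimate alone gives no pointwise bound without an anchor point. The proposed ``bootstrap'' to propagate the bound inward is left entirely undeveloped and is where all the difficulty resides.

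The paper's own proof is shorter and avoids Lemma \ref{u_weakBound} entirely. Writing $\tilde u=u+\al\log|x|$, radiality plus $\De u<0$ gives $\tilde u'(r)\leq\al/r$, so on the scale of an annulus of unit width $\tilde u$ cannot increase by more than a fixed constant. The uniform volume bound $\int e^{n\tilde u}\dx\leq\La_1(1+\al)$ from Theorem \ref{Bols_low} forces $\tilde u$ to be $\leq C_0(n,\al)$ at some point of each annulus $B_{R+1}\setminus B_R$, and the one-sided derivative bound propagates this to the whole annulus. This argument requires only the monotonicity of $u$ and the volume bound, both manifestly uniform in $u$, and is the mechanism you were missing. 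If you want to salvage an approach based on the integral representation, you would still need some such uniform anchor before the Jensen machinery could be applied without circularity.
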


\begin{proof}
	Set $\tilde u(x)=u+\alpha \log|x|.$ Since $u$ is decreasing, one gets 
	\begin{equation}\label{tilde_u_2}
		\tilde u'(x)\leq \frac{\alpha}{|x|}.
	\end{equation} 
	We claim that $\tilde u\leq C$ on $B_1^c.$ Let $R>0$ and $R_0\in[R,R+1]$ be such that $$\tilde u(R_0)=\min_{\bar B_{R+1}\setminus B_R}\tilde u. $$   
	Then by Theorem \ref{Bols_low}, \begin{align}
		\La_1(1+\al)\geq\int_{\R^n} |x|^{n\al}e^{nu}\dx=\int_{\R^n} e^{n\tilde u}\dx \geq e^{n\tilde u(R_0)}|B_{R+1}\setminus B_R|,
	\end{align} which yields $\tilde u(R_0)\leq C_0$ for some  constant $C_0=C_0(n,\alpha)>0$. Notice that for $$\frac12\leq R_1<R_2\leq R_1+2,$$  we have $$\tilde u(R_2)\leq \tilde u(R_1)+4\alpha,$$ thanks to \eqref{tilde_u_2}.  The lemma  follows immediately.  
\end{proof}

\begin{proof}[Proof of Theorem \ref{total-curvature}]
	Let $u$ be a radial normal solution of \eqref{Q-equation}. By assumption, we have $1\leq Q$ and hence, applying 
	Theorem \ref{Bols_low}, we get 
	\begin{equation}\label{bol_1}
		\int_{\R^n} |x|^{n\al} e^{nu(x)}\dx\leq \La_1(1+\al).
	\end{equation}
	Let $p\geq 0$ be as given in the statement. Rewriting $Q|x|^{n\al}e^{nu}=\widetilde{Q}|x|^{n\al+p}e^{nu},$ where $\widetilde{Q}=\frac{Q}{|x|^p},$ and using the assumption, one has $\widetilde{Q}\geq 1.$ Therefore, again  by  Theorem \ref{Bols_low}  (though the function $\tilde Q$ is not in $L^\infty (B_1)$, or not even in $L^1 (B_1)$ for $p\geq n$, we can still apply Theorem \ref{Bols_low} as the the quantity $\tilde Q |x|^{n\alpha+p}=Q|x|^{np}$  is in $L^q ( B_1)$ for some $q>1$), we obtain 
	\begin{equation}
		\label{bol_2}
		\int_{\R^n} |x|^{n\al+p} e^{nu(x)}\dx\leq \La_1\left(1+\frac{n\al+p}{n}\right).
	\end{equation}
	Now, 
	\begin{align}
		\int_{\R^n} (Q-M|x|^p)|x|^{n\al}e^{nu}\dx &=\int_{B_1} (Q-M|x|^p)|x|^{n\al}e^{nu}\dx+\underbrace{\int_{B_1^c} (Q-M|x|^p)|x|^{n\al}e^{nu}\dx}_{:=I}\\
		&\leq C \int_{B_1}|x|^{n\al}e^{nu} + I\\
		&\leq C \La_1(1+\al)+I.\label{QM}  
	\end{align}
	Next, we bound $I$. Observe that $\De u< 0$ in $\R^n\setminus \{0\}$. Thus divergence theorem yields
	\begin{equation}\label{rad-relation}
		u(r_1)-u(r_2)=\int_{r_2}^{r_1}\frac{1}{\om_{n-1}r^{n-1}}\int_{B_r}\De u(x)\dx\dr<0,\;\text{for}\;0<r_2<r_1.
	\end{equation}
	Therefore, $u$ is decreasing. We consider two cases. \\
	\noi \textbf{Case 1:} Let $-1<\al\leq 0$. In this case, $|x|^{n\al}e^{nu}$ is decreasing as $u$ is so. Thus using the hypothesis $(Q-M|x|^p)^+\in L^1(\R^n)$, we have 
	\begin{equation}
		I =\int_{B_1^c} (Q-M|x|^p)|x|^{n\al}e^{nu}\dx\leq e^{nu(1)}\int_{B_1^c} (Q-M|x|^p)^+\dx\leq C e^{nu(1)}. 
	\end{equation}
	Moreover, since $|x|^{n\al}e^{nu}$ is decreasing, \eqref{bol_1} implies that $$I\leq Ce^{nu(1)}\leq \frac{C}{|B_1|}\int_{B_1}|x|^{n\al}e^{nu(x)}\dx\leq C \La_1(1+\al).$$
	Thus, substituting the above estimate in \eqref{QM}, we obtain 
	\begin{equation}\label{QM1}
		\int_{\R^n} (Q-M|x|^p)|x|^{n\al}e^{nu}\dx\leq C.
	\end{equation}
	Therefore, using \eqref{bol_2} and \eqref{QM1}, we conclude
	\begin{align}
		\La_*=\int_{\R^n} Q|x|^{n\al}e^{nu}\dx &=\int_{\R^n} (Q-M|x|^p)|x|^{n\al}e^{nu}\dx+M\int_{\R^n}|x|^{n\al+p}e^{nu}\dx\\
		&\leq C+M\La_1\left(1+\frac{n\al+p}{n}\right)\\
		&= C\left(n,p,M,\al,\|Q\|_{L^\infty(B_1)},\|(Q-M|x|^p)^+\|_{L^1(B_1^c)}\right).
	\end{align} 
	
	\noi \textbf{Case 2:} Let $\al>0.$ Then using Lemma \ref{lem:enu_bound}, one gets
		\begin{equation}
			I=\int_{B_1^c} (Q-M|x|^p)|x|^{n\al}e^{nu}\dx\leq C\int_{B_1^c} (Q-M|x|^p)^+\dx\leq C.
		\end{equation}
%		 Using the fact that $u$ is decreasing and applying \eqref{bol_1}, we get $$e^{nu(1)}\int_{B_1}|x|^{n\al}\dx\leq \int_{B_1}|x|^{n\al}e^{nu}\dx\leq \La_1(1+\al).$$
%		 Thus
%		 \begin{equation}
%		 	I\leq Ce^{nu(1)}\leq \frac{C \La_1(1+\al)}{\int_{B_1}|x|^{n\al}\dx}=C \La_1(1+\al).
%		 \end{equation}
		 Hence, proceeding in the same way as in \textbf{Case 1}, and using the above relation, we deduce 
		 \begin{equation}
		 	\La_*=\int_{\R^n} Q|x|^{n\al}e^{nu}\dx\leq C.
		 \end{equation}
		Combining both the cases, we conclude the proof.
\end{proof}

\begin{remark}\label{example:Q}  We now present two examples illustrating the necessity of the assumptions on 
$Q$ in Theorem  \ref{total-curvature}.   

	(i) Define $Q:\R^2\rightarrow\R$ by 
	\begin{equation}\label{eq:def-trial-state}
		Q(x)=\begin{cases}
			(|x|-1)(2-|x|), &\text{if}\;1<|x|< 2,\\
			0, & \text{otherwise}.
		\end{cases}
	\end{equation}
	For $Q$ as defined above and given $k\in \R$, let $u_k$ be a radial solution  to (existence follows from the ODE theory)
	\begin{equation}\label{eq:example}
		\left\{\begin{aligned}
			-\De u_k &= Q e^{2u_k}\quad \text{in}\; \R^2,\\
		%	\int_{\R^2} Q 	& e^{2u_k(x)}\dx < \infty,\\
			u_k(0) &=k,\, u_k'(0)=0.
		\end{aligned}\right.
	\end{equation}
	Notice that $u_k$ is harmonic in $B_1$,  and in fact  $u_k\equiv k$ in $B_1.$ On the other hand, from \eqref{rad-relation} %divergence theorem along with \eqref{eq:example} implies
	\begin{equation}\label{example:u-mono}
		u_k(r)-u_k(1)=-C\int_{1}^{r}\frac{1}{s}\int_{B_s} Q e^{2u_k} \dx\d s,\quad r\geq1.
	\end{equation}
	We claim that
	\begin{equation}\label{examp:claim}
		\int_{\R^2} Q 
		 e^{2u_k(x)}\dx\rightarrow +\infty\;\quad\text{as}\;k\rightarrow+\infty.
	\end{equation}
	If not, then we can find $M>0$   such that 
	$$\int_{\R^2} Q 
	 e^{2u_k(x)}\dx\leq M\;\quad\text{for}\;k\geq 1.$$
	Thus, \eqref{example:u-mono} yields $|u_k(r)-u_k(1)|\leq C M$ for $1\leq r\leq 2.$ Therefore, one has $k-CM\leq u_k(r)$ for $1\leq r\leq 2.$ Using this lower bound on $u_k,$ one obtains 
	\begin{align}
	\int_{\frac{4}{3} <|x| <\frac{5}{3}} Q 
	e^{2u_k(x)}\dx & \geq C \int_{\frac{4}{3} <|x| <\frac{5}{3}} 
	e^{2u_k(x)}\dx\\
	& \geq C e^{2k-2CM}\rightarrow +\infty \;\text{as}\;k\rightarrow+\infty,
	\end{align} 
	a contradiction. Thus the claim follows.
	
	(ii) %Observe that the constant $C$ obtained in Theorem \ref{total-curvature} depends on certain  norms of $Q.$ 
For $k\geq 1,$ let us consider continuous functions $Q_k:\R^2\rightarrow\R$ given by
	\begin{equation}\label{eq:def-trial-state2}
		Q_k(x)=\begin{cases}
			1, &\text{if}\;x\in B_1\cup B_2^c,\\
			2k(|x|-1)+1, &\text{if}\;1\leq |x|<\frac{3}{2},\\
			-2k(|x|-2)+1, &\text{if}\;\frac{3}{2}\leq |x|<2.
		\end{cases}
	\end{equation}
	For such $Q_k,$  let $u_k$ be a radial solution of \begin{equation}\label{eq:example2}
		\left\{\begin{aligned}
			-\De u_k &= Q_k e^{2u_k}\quad \text{in}\; \R^2,\\
			\int_{\R^2} Q_k 
			& e^{2u_k(x)}\dx < \infty,\\
			u_k(0) &=\log(2),\, u_k'(0)=0.
		\end{aligned}\right.
	\end{equation}
	We claim that
	\begin{equation}\label{examp:claim2}
		\int_{\R^2} Q_k 
		e^{2u_k(x)}\dx\rightarrow +\infty\;\quad\text{as}\;k\rightarrow+\infty.
	\end{equation}
	On the contrary, we can find $M>0$   such that 
	\begin{equation}\label{assum1}
		\int_{\R^2} Q_k 
		e^{2u_k(x)}\dx\leq M\;\quad\text{for}\;k\geq1.
	\end{equation}
	Observe that $u_k(x)=\log\left(\frac{2}{1+|x|}\right)$ in $B_1.$ Therefore, by  \eqref{example:u-mono}   and \eqref{assum1}, we obtain $u_k(r)\geq -C$ for $1\leq r\leq 2$ and for each $k\geq 1.$ Then,
	\begin{align}
		\int_{1 <|x| <2} Q_k 
		e^{2u_k(x)}\dx & \geq C \int_{1 <|x| <2} 
		Q_k\dx \rightarrow +\infty \;\text{as}\;k\rightarrow+\infty,
	\end{align} 
	which contradicts  \eqref{assum1}. Thus \eqref{examp:claim2} follows.

\end{remark}

 %%%%%%%%%%%%%%%%%%%%%%%%%%%%%%%
 %%%%%%%%%%%%%%%%%%%%%%%%%%%%%%%
 %%%%%%%%%%%%%%%%%%%%%%%%%%%%%%%
 %%%%%%%%%%%%%%%%%%%%%%%%%%%%%%%
 %%%%%%%%%%%%%%%%%%%%%%%%%%%%%%%
 %%%%%%%%%%%%%%%%%%%%%%%%%%%%%%%
 %%%%%%%%%%%%%%%%%%%%%%%%%%%%%%%

\noi \textbf{Acknowledgments.} A. Hyder acknowledges the support from SERB SRG/2022/001291. M. Ghosh is supported by TIFR Centre for Applicable Mathematics (TIFR-CAM).

\noi \textbf{Conflict of interest statement.} The authors declare no potential conflicts of interest.

\noi\textbf{Data availability statement.} No data were used for the research described in this article. Therefore, data sharing is not applicable.

\bibliographystyle{abbrvurl}
\bibliography{Reference}
\end{document}